\documentclass[a4paper,11pt]{article}

\usepackage{float}% to put figure in minipage

\usepackage{amsmath}
\usepackage{amssymb}
\usepackage{theorem}
\usepackage{pstricks}
\usepackage{euscript}
\usepackage{epic,eepic}
\usepackage{graphicx}
\PassOptionsToPackage{normalem}{ulem}
\topmargin 0.4cm
\oddsidemargin -0.1cm
\headheight 0.0cm
\textheight 22.2cm
\textwidth  16.5cm
\parindent  6mm
\parskip    3pt %space between paragraphs
\tolerance  1000
%----------------------

\usepackage{hyperref}
\hypersetup{colorlinks, citecolor=blue, filecolor=black, linkcolor=blue, urlcolor=blue}

\usepackage{enumitem}
\usepackage[misc]{ifsym}%corresponding author

\PassOptionsToPackage{normalem}{ulem}
\newcommand{\N}{\mathbb{N}}

\newcommand{\R}{\mathbb{R}}
\newcommand{\Rinf}{\mathbb{R}\cup \{+\infty \}}

\newcommand{\eps}{\varepsilon}
\newcommand{\KL}{\mathrm{KL}}
\newcommand{\kl}{\mathrm{kl}}

\DeclareMathOperator{\argmin}{argmin}
\newcommand{\dom}{\mathop\mathrm{\rm dom}}

\newcommand{\inte}{\mbox{\rm int}}
\DeclareMathOperator{\im}{Im}

\newcommand{\prox}{\mbox{\rm prox}}

\newcommand{\Id}{\operatorname{Id}}

%%%% Composite operators with indexes

%%%% With one index, call it with \operator[down]
\newcommand{\minimize}[2]{\ensuremath{\underset{\substack{{#1}}}%
{\text{\rm minimize}}\;\;#2 }}

\renewcommand{\liminf}[1][]{ \underset{#1}{\mbox{\rm liminf}} \ }

%%%% With two indexes, call it with \operator[down]{downdown}

%%%%%%%%%%%%%%%%%%%%%%%%%%%%%%%%%%%%%%%%%%
%%%%%%%%%% Inequality fonts  %%%%%%%%%%%%%
%%%%%%%%%%%%%%%%%%%%%%%%%%%%%%%%%%%%%%%%%%

\newcommand{\nin}{{n\in\N}}

%%%% Others

%%%% Derivative stuff
\renewcommand{\d}{{\rm d}}
\newcommand{\dt}{{\rm d}t}

%%%% Empty spaces

%%%% Operator norm
\DeclareFontEncoding{FMS}{}{}
\DeclareFontSubstitution{FMS}{futm}{m}{n}
\DeclareFontEncoding{FMX}{}{}
\DeclareFontSubstitution{FMX}{futm}{m}{n}
\DeclareSymbolFont{fouriersymbols}{FMS}{futm}{m}{n}
\DeclareSymbolFont{fourierlargesymbols}{FMX}{futm}{m}{n}
\DeclareMathDelimiter{\VERT}{\mathord}{fouriersymbols}{152}{fourierlargesymbols}{147}

%%%% Long arrows

\newcommand{\longlongrightarrow}{} 
\DeclareRobustCommand{\longlongrightarrow}{\relbar\joinrel\longrightarrow}

\newlist{myitemize}{itemize}{3}
\setlist[myitemize,1]{leftmargin=1.15cm}
\setlist[itemize]{label=$\bullet$,leftmargin=0.35cm}
%-------------------------------------------------------------------
%-------------------------------------------------------------------

\newtheorem{theorem}{Theorem}[section]
\newtheorem{lemma}[theorem]{Lemma}

\newtheorem{proposition}[theorem]{Proposition}
\newtheorem{definition}[theorem]{Definition}
\theoremstyle{plain}{\theorembodyfont{\rmfamily}
}
\theoremstyle{plain}{\theorembodyfont{\rmfamily}
}
\theoremstyle{plain}{\theorembodyfont{\rmfamily}
}
\theoremstyle{plain}{\theorembodyfont{\rmfamily}
\newtheorem{example}[theorem]{Example}}
\theoremstyle{plain}{\theorembodyfont{\rmfamily}
}
\theoremstyle{plain}{\theorembodyfont{\rmfamily}
\newtheorem{remark}[theorem]{Remark}}
\theoremstyle{plain}{\theorembodyfont{\rmfamily}
}

%\numberwithin{equation}{section}
%-------------------------------- HYPERLINKS -------------------
%http://tug.ctan.org/cgi-bin/ctanPackageInformation.py?id=hyperref
\definecolor{labelkey}{rgb}{0,0.08,0.45}
\definecolor{refkey}{rgb}{0,0.6,0.0}
\definecolor{Brown}{rgb}{0.45,0.0,0.05}
\definecolor{dgreen}{rgb}{0.00,0.49,0.00}
\definecolor{dblue}{rgb}{0,0.08,0.75}
%\RequirePackage[dvips,colorlinks,hyperindex]{hyperref}
%\hypersetup{linktocpage=true,citecolor=dblue,linkcolor=dgreen}
%---------------------------------------------------------------
%\tolerance 2500
%------------------------------------------------------------------
\numberwithin{equation}{section}

%------------------------------------------------------------------

\begin{document}
\title{\sffamily 
Iterative regularization via  dual diagonal descent
\thanks{This material is based upon work supported by the Center for Brains, Minds and Machines (CBMM), funded by NSF STC award CCF-1231216.
 The research of Guillaume Garrigos was partially supported by the Air Force Office of Scientific Research, 
 Air Force Material Command, USAF, under grant number F49550-1 5-1-0500 .L. Rosasco acknowledges the financial support of the Italian Ministry of Education, University and Research FIRB project RBFR12M3AC.
S. Villa is member of the Gruppo Nazionale per
l'Analisi Matematica, la Probabilit\`a e le loro Applicazioni (GNAMPA)
of the Istituto Nazionale di Alta Matematica (INdAM). }}
\author{Guillaume Garrigos$^{1}$, Lorenzo Rosasco$^{1,2}$, and Silvia Villa$^3$
\\[5mm]
\small
\small $\!^1$ LCSL, Istituto Italiano di Tecnologia and Massachusetts Institute of Technology,\\
\small Bldg. 46-5155, 77 Massachusetts Avenue, Cambridge, MA 02139, USA\\
\small \ttfamily{guillaume.garrigos@iit.it}
\\[5mm]
\small
\small $\!^2$ DIBRIS, Universit\`a degli Studi di Genova\\
\small  Via Dodecaneso 35, 16146, Genova, Italy\\
\small \ttfamily{lrosasco@mit.edu}
\\[5mm]
\small
\small $\!^3$ Dipartimento di Matematica, Politecnico di Milano\\
\small Via Bonardi 9, 20133 Milano, Italy\\
\small \ttfamily{silvia.villa@polimi.it}
}
\date{~}

\maketitle

\begin{abstract}
In the context of linear inverse problems, we propose and study a general iterative regularization method 
allowing to consider large classes of regularizers and data-fit terms. 
The algorithm we propose is based on a primal-dual diagonal {descent} method. 
Our analysis establishes convergence  as well as stability results. 
Theoretical findings are complemented with  numerical experiments showing state of the art performances. 
\end{abstract}

\begin{small}
{\bf Keywords:} 
Splitting methods, Dual problem, Diagonal methods, Iterative regularization, Early stopping

{\bf Mathematics Subject Classifications (2010)}:  90C25, 49N45, 49N15, 68U10, 90C06
\end{small}

\section{Introduction}

Many applied problems in science and engineering can be modeled as noisy inverse problems. 
{This is  true in particular for many problems  in image processing, such as image denoising, image deblurring, image segmentation, or inpainting.}
Tackling these problems requires to deal with their possible ill-posedeness \cite{EngHanNeu96} and to devise efficient numerical procedures to quickly and accurately compute a solution.

Tikhonov regularization is a classical approach to restore well-posedness \cite{DonZol93}. 
A stable solution is defined by the minimization of an objective function being the sum of two terms: a data-fit term 
and a regularizer  ensuring stability. 
From a numerical perspective, first order methods
have recently become popular to solve the corresponding optimization problem~\cite{ComPes11}.
Indeed, simplicity and low iteration cost make these methods especially suitable in large scale applications.

In practice, finding the best Tikhonov regularized solution  requires specifying a regularization parameter  determining the trade-off between data-fit and stability.
Discrepancy principles \cite{EngHanNeu96}, SURE \cite{Ste81,DelVaiFad14}, and cross-validation \cite{SteChr08} are some  of the methods used to this purpose.
An observation important for our work is that, from a numerical perspective, choosing the regularization parameter for Tikhonov regularization typically requires solving not one, but several optimization problems, i.e. one for each regularization parameter to be tried.  Clearly, this can dramatically increase the computational costs to find a good solution, and the question of how to keep accuracy while ensuring better numerical complexity is a main motivation for our study.

 In this paper, we depart from Tikhonov regularization and consider iterative regularization approaches \cite{BakKok04}. The latter are classical  regularization techniques  based on the observation that stopping an iterative procedure corresponding to the minimization of an empirical objective has a self-regularizing property \cite{EngHanNeu96}. 
 Crucially, the number of iterations becomes  the regularization parameter, and hence  controls at the same time the stability of the solution as well as the computational complexity of the method. 
This property makes parameter tuning numerically efficient and iterative regularization an alternative to Tikhonov regularization, which potentially alleviates the  aforementioned drawbacks.
{Indeed, an advantage of iterative regularization strategies is that they are developed in conjunction with the optimization algorithm, which is tailored to the structure of 
the problem of interest.}

 Iterative regularization methods are classical  both in linear \cite{EngHanNeu96} and non linear inverse problems \cite{BakKok04,KalNeuSch08}, for quadratic data-fit term and quadratic   regularizers. Extensions to more general regularizers have been considered in recent works \cite{BacBur09,BurOsh13,BurResHe07,BotHei12}.
 However,  
 we are not aware of iterative regularization methods that allow  considering more general data-fit terms. 
 Indeed,  while this is easily done  in Tikhonov regularization,  how to do the same in iterative regularization is less clear and our study provides an answer.  

Our starting point is viewing  the inverse problem as a hierarchical optimization problem  defined by the  regularizer and the data-fit term. 
The latter can belong to  wide classes of  convex,  but possibly non-smooth functionals.   
To solve such an optimization problem 
we combine duality techniques \cite{ComDunVu10} with a diagonal approach \cite{BahLem94}. As a result, we obtain a primal-dual method,   
given by a diagonal forward-backward algorithm on the dual problem. The algorithm thus obtained is simple and easy to implement.  Our main result proves  convergence in the noiseless case,  and is an optimization result interesting in its own right. Combining this result with a stability analysis  allows to derive  iterative regularization properties of the method.  Our theoretical analysis is complemented with numerical results comparing the proposed method with Tikhonov regularization on various imaging problems. 
The obtained results show that our approach is  competitive in terms of accuracy and often outperforming 
Tikhonov regularization from a numerical perspective. To the best of our knowledge, our analysis is the first  
study on iterative regularization methods for  general data-fit terms and hence it is a step towards broadening the applicability and practical impact of these techniques.

The rest of the paper is organized as follows. In Section~\ref{S:Background and notation} we collect some technical definitions and results needed in the rest 
of the paper, whereas in Section~\ref{S:diagonal regularization} we recall the basic ideas in inverse problems and regularization theory.
In Section~\ref{S:the 3D method} we introduce the algorithm we propose in this paper and present in Section \ref{S:main results} its regularization properties, which constitutes our main results.
The theoretical analysis of the (3-D) method is made in Sections~\ref{S:Convergence} and \ref{S:regularization}, while Section~\ref{S:numerical} contains its numerical study. The Appendix
contains the proof of some auxiliary and technical results.

\section{Background and notation}\label{S:Background and notation}

We give here some mathematical background  needed in the paper. 
We refer to \cite{BauCom,Pey} for an account of the main results in convex analysis.

Since our algorithm will essentially rely on duality arguments, we first introduce 
the notion of (Fenchel) conjugate. Let $H$ be  a Hilbert space, $2^{H}$ its power set, and  
$f : H \longrightarrow [-\infty,+\infty]$. Its {\em Fenchel conjugate} $f^* : H \longrightarrow [-\infty,+\infty]$ is 
\[
(\forall x\in H)\quad f^*(x):=  \sup_{x'\in H}  \left\{\langle x',x\rangle -f(x') \right\}.
\]
We say that $f$ is {\em coercive} if $\lim_{\|x\|\to+\infty} f(x)=+\infty$.
We  denote by $\Gamma_0(H)$ the set of proper, convex and lower semi-continuous functions from $H$ to $]-\infty,+\infty]$.
Let  $\sigma \in\left]0,+\infty\right[$. We say that  $f \in \Gamma_0(H)$ is $\sigma$-{\em strongly convex}  if 
$f-{\sigma}\Vert \cdot \Vert^2/2 \in \Gamma_0(H)$.
The Fenchel conjugate of a $\sigma$-strongly convex function is differentiable, with a $\sigma^{-1}$-Lipschitz 
continuous gradient \cite[Theorem 18.15]{BauCom}. 
We recall that the {\em subdifferential} of  $f \in \Gamma_0(H)$ is the operator 
$\partial f : H \rightarrow 2^{H}$  defined by, for every $x\in H$,
\[
x^* \in \partial f(x)\Leftrightarrow (\forall x' \in H)\ \  \ f(x') - f(x) - \langle x^* , x' - x \rangle \geq 0.
\]
If $f$ is Fr\'echet differentiable at $x \in H$, then $\partial f(x)=\{\nabla f(x) \}$.
The subdifferential also enjoys a symmetry property with respect to the Fenchel 
conjugation~\cite[Theorem 16.23]{BauCom}: 
\[
(\forall (x,x^*)\in H^2)\quad x^* \in \partial f(x) \Leftrightarrow x \in \partial f^*(x^*).
\] 

Given two functions $f,g \in \Gamma_0(H)$, their {\em infimal convolution} 
(or inf-convolution) is the function $f \# g$ in  $\Gamma_0(H)$ defined by
\begin{equation*}
 (\forall x\in H)\quad(f \# g) (x):=\inf\limits_{x' \in H}  \left\{ f(x') + g(x-x') \right\}.
\end{equation*}
The Fenchel conjugate of the infimal convolution of two functions can be 
simply computed by the following rule \cite[Proposition 13.21(i)]{BauCom}
\begin{equation}
\label{e:fenchelinfc}
 (f\# g)^*=f^*+g^*.
\end{equation}

We also recall the notion of proximity operator, which is a key tool to define the algorithm we study.
The {\em proximity operator} of $f \in \Gamma_0(H)$ is the operator $\prox_f : H \longrightarrow H$ defined by,
for every $x\in H$,
\begin{equation}\label{D:proximal operator}
\prox_{f}(x)=\underset{x' \in H}{\argmin} \left\{f(x')+\frac{1}{2}\|x'-x\|^2 \right\} .
\end{equation}
The proximity operator is particularly relevant when computing the gradient of the conjugate of 
a strongly convex function.
\begin{lemma}
\label{L:gradient dual prox formula}
Let $H_1,H_2$ be Hilbert spaces. Let $J \in \Gamma_0(H_2)$, $\sigma\in\left]0,+\infty\right[$, $x' \in H_1$, and $W\colon H_1\to H_2$
be a linear orthogonal operator. Let $f \in \Gamma_0(H_1)$ be the function defined by $f(x) = J(W x) + {\sigma} \Vert x  - x'\Vert^2/2.$ 
Then, 
$$(\forall x\in H_1) \quad\nabla f^*(x) = W^*\prox_{\sigma^{-1} J}(Wx' + \sigma^{-1}Wx).$$
\end{lemma}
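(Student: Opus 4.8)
The plan is to characterize $\nabla f^*(x)$ through the symmetry of the subdifferential under Fenchel conjugation, and then to verify directly that the proposed formula satisfies that characterization.

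First I would note that $f - \sigma\Vert\cdot\Vert^2/2 = J\circ W$, which belongs to $\Gamma_0(H_1)$ since $W$ is linear and continuous and $J\in\Gamma_0(H_2)$; hence $f$ is $\sigma$-strongly convex. By the result cited in the excerpt (\cite[Theorem 18.15]{BauCom}), $f^*$ is then Fréchet differentiable, so $\partial f^*(x) = \{\nabla f^*(x)\}$ for every $x \in H_1$. Invoking the symmetry relation $x^*\in\partial f(x)\Leftrightarrow x\in\partial f^*(x^*)$, proving $\nabla f^*(x)=y$ is therefore equivalent to proving $x\in\partial f(y)$. This reduces the whole statement to checking one subdifferential inclusion.

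Next I would compute $\partial f(y)$. The quadratic term is differentiable with gradient $\sigma(y-x')$. Since $W$ is orthogonal it is in particular surjective, so the qualification condition for the subdifferential chain rule is met and $\partial(J\circ W)(y)=W^*\partial J(Wy)$. Consequently $\partial f(y)=W^*\partial J(Wy)+\sigma(y-x')$. I would then set $z=Wx'+\sigma^{-1}Wx$, $p=\prox_{\sigma^{-1}J}(z)$, and take $y=W^*p$ as the candidate value. Using $WW^*=\Id$ gives $Wy=p$, and the defining inclusion of the proximity operator, $z-p\in\partial(\sigma^{-1}J)(p)=\sigma^{-1}\partial J(p)$, yields $u:=\sigma(z-p)\in\partial J(p)=\partial J(Wy)$.

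It then remains to verify $x=W^*u+\sigma(y-x')$. Expanding $u=\sigma Wx'+Wx-\sigma p$ and applying $W^*W=\Id$ gives $W^*u=\sigma x'+x-\sigma W^*p$, while $\sigma(y-x')=\sigma W^*p-\sigma x'$; adding these produces exactly $x$. Hence $x\in W^*\partial J(Wy)+\sigma(y-x')=\partial f(y)$, so $y\in\partial f^*(x)=\{\nabla f^*(x)\}$, which is the asserted identity. The differentiability step and the final algebraic identity are routine; the point requiring care is the justification of the chain rule $\partial(J\circ W)=W^*\partial J(W\,\cdot)$ together with the consistent use of \emph{both} orthogonality relations $W^*W=\Id$ and $WW^*=\Id$ — both are genuinely needed, and this is precisely where the orthogonality of $W$, rather than its being a mere isometry, enters the argument.
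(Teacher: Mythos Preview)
Your proof is correct and follows essentially the same approach as the paper: both arguments use the differentiability of $f^*$ via strong convexity, identify $\nabla f^*$ with $(\partial f)^{-1}$ (which you phrase via the symmetry relation), invoke the chain rule $\partial(J\circ W)=W^*\partial J(W\cdot)$ using surjectivity of $W$, and then exploit both orthogonality relations $W^*W=\Id$ and $WW^*=\Id$. The only cosmetic difference is direction: the paper starts from $u\in\partial f(x)$ and solves for $x$, whereas you posit the candidate $y$ and verify $x\in\partial f(y)$.
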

The proof is postponed to  Appendix~\ref{S:Annex1}.

We end this section by introducing the notion of conditioning~\cite{Vai70,Zol78,Zal},  
which is a common tool in the optimization and regularization literature \cite{Lem98,BolNguPeySut15},
and will be required later for the data-fit function. 
\begin{definition}\label{D:conditioning}
Let $f\in\Gamma_0(H)$ having a unique minimizer $x_0\in H$. 
The function $f$ is said to be {\em well conditioned} if there exists a positive even function $m\in\Gamma_0(\R)$
such that, for every $t\in\R$, $m(t)=0\implies t=0$, and 
\begin{equation}
\label{E:growth condition}
(\forall x \in H)\quad  m\left( \Vert x -x_0\Vert \right) \leq f(x)- f(x_0).
\end{equation}
In that case, $m$ is called a {\em conditioning modulus} (or growth modulus) for $f$. 
Let $p\in\left[1,+\infty\right[$. We  say that $f$ is $p$-{\em well conditioned},
if there exists $(\varepsilon,\gamma)\in\left(\left]0,+\infty\right[\right)^2$
such that
\begin{equation}
\label{E:p_well} 
\left(\forall t\in\left]-\varepsilon,\varepsilon\right[\right)\quad m(t)\geq \frac{\gamma}{p}|t|^p
\end{equation}
\end{definition}
\paragraph{Notation.}
We adopt the following standard notation: $\mathbb{R}_+=\left[0,+\infty\right[$, 
$\mathbb{R}_{++}=\left]0,+\infty\right[$,  $\mathbb{R}^d_{++}=\left]0,+\infty\right[^d$,
  $\mathbb{N}^*=\N\setminus\{0\}$. The identity operator from a set to itself is denoted by $\Id$.
Given $C$ a subset of a topological space, $\mathrm{int} C$ denotes the interior of  $C$. 
The set of minimizers of a function $f$ is denoted by $\argmin f$, and its domain is noted $\dom f$.
The range of a linear operator $A : H_1 \rightarrow H_2$ will be denoted by  $\im A$, and its norm 
is denoted by $\|A\|$. The norms in the considered Hilbert spaces are always denoted by $\|\cdot\|$.
 
%%%%%%%%%%%%%%%%%%%%%%%%%%%%%%%%%%
\section{Background: inverse problems and regularization}
\label{S:diagonal regularization}
In this section, we recall  basic notions in inverse problems  theory and   introduce  Tikhonov and iterative regularization. 

\subsection{Linear inverse problems}

Let $X$ and $Y$ be Hilbert spaces. 
Given $\bar {y}\in Y$ and a bounded linear operator  $A : X \longrightarrow Y$ 
the corresponding inverse problem is to find ${\bar x}\in X$ satisfying
\begin{equation}
\label{e:ideal}
 {\bar{y}}=A {\bar{x}}.
\end{equation}
{For example, in denoising  $A=Id$ and in deblurring $A$ is an integral operator for suitable kernel}.
In general, the above problem is {\em ill-posed}, in the sense that a solution might not exist, might not be unique, or might not depend continuously to the 
data $\bar{y}$ \cite{EngHanNeu96}. The first step towards restoring well-posedness is then to introduce a notion of generalized solution. 
This latter definition  hinges on the choice of a regularizer, that is a functional $R \in \Gamma_0(X)$ and a data-fit function 
$D: Y^2 \longrightarrow \Rinf$. A generalized solution $x^\dagger\in X$  is then defined 
as a solution of the problem
\begin{equation}
\label{e:P}
\tag{$P$}  \text{minimize} \  \left\{ R(x)  \  \left| \ x\in\argmin_{x'\in X} D(Ax',\bar{y}) \right\} \right..
\end{equation}
Consider the following classical example to illustrate the above definition.

\begin{example}[Moore-Penrose solution]\label{ex:MP}
Let  $R(x)=\|x\|$ and  $D(Ax,y)=\|Ax-y\|^2$ for all $x\in X$. 
Then, under mild assumptions, there exists a unique generalized solution to \eqref{e:P} which is the Moore-Penrose solution $x^\dagger=A^\dagger \bar y$, where $A^\dagger$ 
is the pseudo-inverse of $A$ \cite{EngHanNeu96}.
\end{example}

We add two comments. 
First, note that there might be more than one generalized solution, however in the following we will restore uniqueness of $x^\dagger$ by assuming $R$ to be strongly convex.
Second, as we discuss next, in general $x^\dagger$ might not depend continuously on $\bar y$,  as it is clear from the  Example~\ref{ex:MP}. This  last observation is crucial, since in practice  only a noisy  datum  is available. 
Ensuring continuity, hence stability, to noisy data is the main motivation of regularization techniques  described in the next section. We first add a few further examples of regularizers and data-fit functions, and one remark.

 \begin{example}[Regularizers]\label{E:regularizers list}
 A choice of regularizer popular in image processing  is the 
 $\ell^1$-norm of the coefficients of $x\in X$ with respect to an orthonormal basis, or a more general dictionary, e.g. a frame. Indeed, this regularizer can be shown to correspond to a sparsity prior assumption on the solution \cite{Mal09}.
Another popular choice of regularizer is the total variation 
\cite{RudOshFat92}, due to its ability to preserve edges.
Other possibilities are total generalized variation \cite{BreKunPoc10}, or infimal convolutions
between total variation and higher order derivatives \cite{ChaLio97}. Yet another possibility is 
to consider a Huber norm of the gradient, instead of the $L^1$ norm~\cite{ChaPoc11}. We refer to  \cite{Lan16} for additional references.
\end{example}

We next discuss several examples of data-fit functions. 
Flexibility in the  choice of the latter  is  a key aspect 
for our study. 

\begin{example}[Data-fit function]
\label{R:data-fit function list}
As mentioned above,  a classical choice for the data-fit function is 
the $\ell^2$ norm:
\[(\forall (u,y)\in Y^2)\quad D(u;y)=\|u-y\|^2/2.\]
We list a few further examples.
\begin{itemize}
\item  the $\ell^1$-norm in $\mathbb{R}^d$,
\[
(\forall (u,y)\in \R^d\times \R^d)\quad D(u;y)=\|u-y\|_1;
\]
\item  the Kullback-Leibler divergence, defined , for every $ y \in\R^d$, as
$D(u;y):=\KL(y,u)=\sum_{i=1}^d \kl(y_i,u_i)$, where
\[
\kl(y_i,u_i)=\begin{cases} \displaystyle\sum_{{i=1}}^d y_{i}\log\frac{y_{i}}{u_{i}}-y_{i}+u_{i} &\text{if }  (y_i,u_i) \in\left ]0,+\infty\right[^{2} \\[2ex] 
+\infty &\text{otherwise;}\end{cases}
\]
\item the weighted sum of $L^1$ and $L^2$ norms in $\R^d$  \cite{HinLan13},
\[
(\forall (u,y)\in \R^d\times \R^d) \quad D(u;y)=\|u-y\|_1+\frac{\sigma}{2}\|u-y\|^2, 
\]
for some $\sigma \in\left]0,+\infty\right[$;
\item 
the Huber data-fit function \cite{CalReySch16} in $\R^d$.
Let $\sigma\in\R_{++}$ and    the Huber function be $h_\sigma \colon \R\to\R_{+}$,
\begin{equation}
\label{e:huber}
(\forall t\in \R)\quad h_\sigma(t)=\begin{cases}\frac{1}{2 \sigma }t^2  & \text{ if } \vert t \vert \leq \sigma \\
 \vert t \vert -\frac{\sigma}{2}& \text{ otherwise.}
\end{cases}
\end{equation}
 Then the corresponding data-fit function can be formulated as, 
$(\forall (u,y)\in \R^d\times \R^d)$,
$$
D(u;y)=H_{\sigma}(u-y):=\sum\limits_{i = 1}^d h_{\sigma}(u_i-y_i).
$$
\end{itemize}
\end{example}
Both the choice of the regularizer and the data-fit function reflect some prior information about the problem at hand. This latter observation can be further developed taking a probabilistic (Bayesian) perspective, as we recall in the next remark.

\begin{remark}[Bayesian interpretation]\label{rem:bay}
In a Bayesian framework,  the choice of regularizers and data-fit functions can be related to the choice of a prior distribution on the solution and a noise model with a corresponding likelihood. In particular, for the data fit functions it can be seen that the quadratic norm is related to Gaussian noise and moreover, 
\begin{itemize}
\item  the $L^1$-norm is related to impulse noise, e.g.  salt and pepper or random-valued impulse noise \cite{Nik02},
\item  the Kullback-Leibler divergence is related to Poisson noise \cite{LeChaAsa07},
\item the weighted sum of $L^1$ and $L^2$ norms is related to mixed Gaussian and impulse noise \cite{HinLan13},
\item  and  the Huber data-fit function \cite{CalReySch16} is related to
 the mixed Gaussian and impulse noise.
\end{itemize}
\end{remark}

\subsection{Tikhonov and iterative regularization}
The basic idea of regularization is to approximate a generalized solution $x^\dagger$ of \eqref{e:P} with a family of solutions having  better  stability properties.  More precisely, given a pair $(A, \bar y)$, a regularization method defines a sequence $(x_\lambda)_{\lambda\in\Lambda}\in X$, where 
$\Lambda = ]0, +\infty[$, or 
$\Lambda= \N$.  
The idea is  that the so called regularization parameter $\lambda$ controls the  accuracy with which $x_\lambda$ approximates $x^\dagger$. 
Indeed, the first basic regularization property  is to require 
 \begin{equation}\label{e:reg_prop}
 x_\lambda \to x^\dagger, \text{ as } \lambda \to 0,
 \end{equation}
 (or as $\lambda\to +\infty$ when $\Lambda=\N$).
 The second basic property of a regularization method is stability. 
{ Given  $\hat y $ a noisy version  of the exact datum $\bar{y}$}, this latter property can be seen as the requirement  for the 
sequence  $(\hat x_\lambda)_{\lambda\in\Lambda}\in X$, corresponding to the regularization method applied to
 $(A,\hat y)$, to be sufficiently close to  $(x_\lambda)_{\lambda\in\Lambda}\in X$.
 This latter property,  together with the regularization property, allows to show that $\hat x_\lambda$ is  a good approximation  to $x^\dagger$ -- at least provided a suitable regularization parameter choice $\lambda\in \Lambda$. 
 We refer to \cite{EngHanNeu96} for further details and illustrate the above definitions with two specific examples of regularization operators.

{\paragraph{Tikhonov regularization.}
In the setting of Example~\ref{ex:MP}, Tikhonov regularization is defined by the following minimization problem
$$
x_\lambda= \minimize{x\in X} \|x\|^2 + \frac{1}{\lambda} \|Ax-y\|^2.
$$ 
The above approach easily  extends to more general regularizers/data-fit terms considering
\begin{equation*}
\label{e:Tik}
\tag{$P_\lambda$} \quad x_\lambda = \minimize{x\in X}  R(x) + \frac{1}{\lambda} D(Ax,y). 
\end{equation*}
From the above definition it is clear that Tikhonov regularization requires to solve an optimization problem, for each value of the regularization parameter $\lambda$. 
{For large scale applications, or if the problem is non-linear, solving $(P_\lambda)$ exactly is not possible, so only an approximation of $x_\lambda$ can be considered.
While a variety of techniques can be used to this purpose, iterative methods, and in particular those based on first order methods, are particularly favored. }
Broadly speaking, for each regularization parameter $\lambda \in \Lambda\subset ]0,+\infty[$, an iterative optimization method is defined by a sequence 
\begin{equation}\label{E:abstract one-run algorithm}
x_{0,\lambda} \in X, \ x_{n+1,\lambda}=\text{Algorithm}(x_{n,\lambda};   \lambda;y),
\end{equation} 
in such a way that $x_{n, \lambda}$ tends to $x_\lambda$  as $n$ grows. It is then clear that,  as mentioned in the introduction, the need to select a regularization parameter  can have a dramatic effect from a numerical perspective. 
Indeed, in practice $\Lambda$ is a finite set $\Lambda_N\subset ]0,+\infty[$, and an optimization problem needs
to  be solved for each regularization parameter $\lambda \in \Lambda_N$. If $N$ is the cardinality of the set $\Lambda_N$, the numerical complexity of the iteration~\eqref{E:abstract one-run algorithm} is now multiplied by $N$. 

{The question of deriving alternative regularization techniques tackling directly non-linearity and large scale issues, and having better complexity, is then of both theoretical and practical relevance. 
As mentioned next, iterative regularization provides one such alternative. }

\paragraph{Iterative regularization.}
Iterative regularization is typically derived considering an iterative optimization procedure to solve directly problem~\eqref{e:P} (rather than~\eqref{e:Tik}), 
\begin{equation}\label{E:abstract one-run it reg}
x_{0} \in X, \ x_{n+1}=\text{Algorithm}(x_n; y). 
\end{equation} 
For instance, in the setting of Example~\ref{ex:MP}, a classical  iterative regularization method is the Landweber method \cite{EngHanNeu96} defined by the iteration
$$
x_{0} \in X, \ x_{n+1}=x_n - \tau A^* (Ax-y),$$
where $\tau \in\left]0, 2 \| A \|^{-2}\right[$ is a stepsize.
Note that for iterative regularization methods, the regularization parameter is the \textit{number of iterations}.
In this setting, the regularization property~\eqref{e:reg_prop} reduces  to the convergence 
of the iteration to  $x^\dagger$ when \eqref{E:abstract one-run it reg} is applied with $y=\bar y$. 
Stability, when iteration~\eqref{E:abstract one-run it reg} is applied to noisy data, is ensured by defining a regularization parameter choice,  which in this case is a stopping criterion. 

When compared to Tikhonov regularization, the advantage of iterative regularization is mostly numerical. Computing solutions corresponding to different regularization parameters
 is straightforward, since the latter is simply the number of iterations. In practice, this property often turns into dramatic computational speed-ups while performing regularization parameter tuning.

A main motivation for this work is the observation that, differently from Tikhonov regularization, how to design   iterative regularization for general regularizers and data-fit terms is not as clear. 
Iterative regularization method that allow to consider  more general regularizers are known in the literature, but are typically restricted to quadratic data-fit functions.
 In practice this latter choice can be limiting, since  considering different  data-fit functions is often crucial. 
However, we are not aware of studies considering iterative regularization for general classes of error functions. The results we describe next are  a step
towards   filling this gap.

\section{The Diagonal Dual Descent (3-D) method}\label{S:the 3D method}

In this section, we describe  the iterative algorithm we propose and analyze in the rest of the paper. We begin by an informal description 
introducing some basic ideas, before providing a more detailed discussion.
\subsection{Diagonal algorithms}
\label{sec:DTR}
{ We will consider  an iterative optimization method based on a \textit{diagonal principle}. The classic idea \cite{BahLem94}  is to combine an 
optimization algorithm, with a sequence of approximations of  the given problem~\eqref{e:P}, changing eventually the 
approximation at each step of the algorithm.  }
In our setting, this  corresponds to  an  algorithm as in~\eqref{E:abstract one-run algorithm}
where  the parameter $\lambda$ can be updated at each iteration,
\begin{equation}
\label{E:DiagonalAlgoGeneralForm}
x_0 \in X, \ x_{n+1}:=\text{Algorithm}(x_{n};\lambda_n;y), \ \lambda_n \to 0.
\end{equation}
Roughly speaking, we allow the algorithm to ``{\em switch}'' between penalized problems corresponding to  different values of $\lambda$.
 As briefly recalled previously, for iterative regularization methods, the number of iterations, and thus here the sequence $(\lambda_n)_{n\in\mathbb{N}}$,
controls the accuracy with which $x_{n}$ approaches $x^\dagger$. 

We will first show that the  basic regularization property holds, namely that in the noiseless case
$x_{n}\to x^\dagger$, as $n\to+\infty$, provided that $\lambda_n\to 0$.  Then, we will prove stability with respect to noise. Combining this latter property with the regularization one
will allow us to derive a suitable stopping rule and  to build a stable approximation of $x^\dagger$. In particular, in the presence of noise, the  stopping rule  will impose termination
of  the iterative procedure before $\lambda_n$ reaches $0$, preventing numerical instabilities.
We now illustrate the diagonal principle in the setting of Example~\ref{ex:MP}.

\begin{example}{(Diagonal Landweber algorithm)}
\label{Ex:Diagonal Landweber algo}
In the setting of Example~\ref{ex:MP},  a basic diagonal algorithm is the diagonal Landweber algorithm
\begin{equation}
\label{E:Diagonal Landweber algo}
\begin{array}{|l}
{x}_0 \in X,  \ \lambda_n \to 0, \ \tau >0 \text{ is a stepsize}, \\
{x}_{n+1} ={x}_n - \tau A^*(A{x}_n -{y}) - \tau \lambda_n {x}_n.
\end{array}
\end{equation}
The above iteration can be seen as  the gradient descent method applied to \eqref{e:Tik}, for 
$R=\frac{1}{2}\Vert \cdot \Vert^2$ and $D(\cdot;y)=\frac{1}{2}\Vert\cdot - {y} \Vert^2$, and especially  considering $\lambda$  to change at each iteration.
The above  iteration  has been mainly studied for nonlinear inverse problems, and is known under several names:
modified Landweber iteration \cite{Sch98}, iteratively regularized Landweber iteration \cite{KalNeuSch08}, iteratively regularized gradient method \cite{BakKok04},  
or Tikhonov-Gradient method \cite{Ram03}.  
In Figure \ref{F:Landweber vs diagonal landweber}, we illustrate the difference between the diagonal Landweber algorithm and the classic Tikhonov method.
\end{example}

\begin{figure}[t]
\begin{center}
\includegraphics[width=3cm]{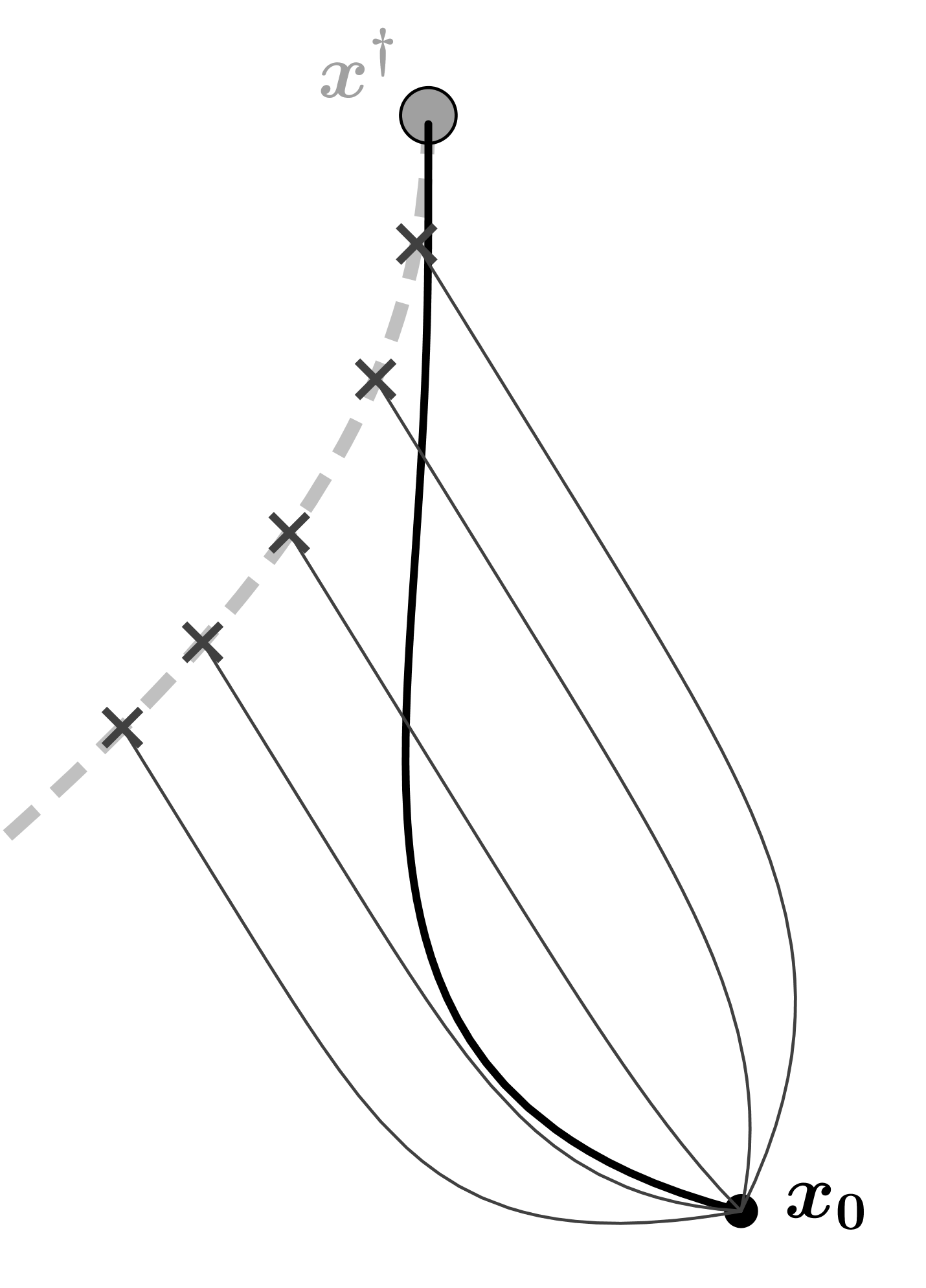}
\end{center}
\caption{
Thick dotted line: Tikhonov regularization path $\{ x_{\lambda} \}_{\lambda >0 }$. 
Thin  plain lines: Gradient Descent solving $(P_\lambda)$ for $\lambda\in \{1,0.75,0.5,0.25,0.1\}$, starting from $x_0$.
Thick plain line: Diagonal Landweber algorithm, with $\lambda_n=(n+1)^{-1}$.
Here $A=[(1,1)^T,(1,0)^T ]$ and $y=(2,1)^T$.
}
\label{F:Landweber vs diagonal landweber}
\end{figure}

It is interesting to relate diagonal methods to ``warm-restart'', a heuristic commonly used to speed up the computations of Tikhonov regularized 
solutions for different regularization parameter values \cite{BecBobCan11}. 

\begin{example}{(Warm restart)}
\label{Ex:warm restart}
Warm restart, or continuation method, is a popular heuristic used to approximately follow the {\em path} $\{ x_\lambda\,\colon\, \lambda\in\Lambda\}$ of 
solutions of problem \eqref{e:Tik}. The method is based on considering a sequence of problems $(P_{\lambda_i})_{i\in\N}$ 
for a decreasing family of parameters $(\lambda_i)_{i \in \N}$ in $\R_{++}$. 
Then, the  solutions corresponding to larger values of $\lambda_i$ are computed first and  used to initialize --  ``warm'' start -- the next problem.
The rationale behind the method, is the empirical observation that  
solving \eqref{e:Tik} with a first-order method as in \eqref{E:abstract one-run algorithm} is faster if $\lambda$ is large  \cite{HalYinZha08}. 
It is easy to see that this continuation  strategy generates a sequence $(x_n)_{n\in\N}$ which corresponds to  the diagonal algorithm \eqref{E:DiagonalAlgoGeneralForm}, for  
a piecewise constant decreasing sequence $(\lambda_n)_{n\in\N}$. 
The warm restart principle is illustrated in Figure \ref{F:Landweber vs warm restart}.
\end{example}

\begin{figure}[t]
\begin{center}
\includegraphics[width=3cm]{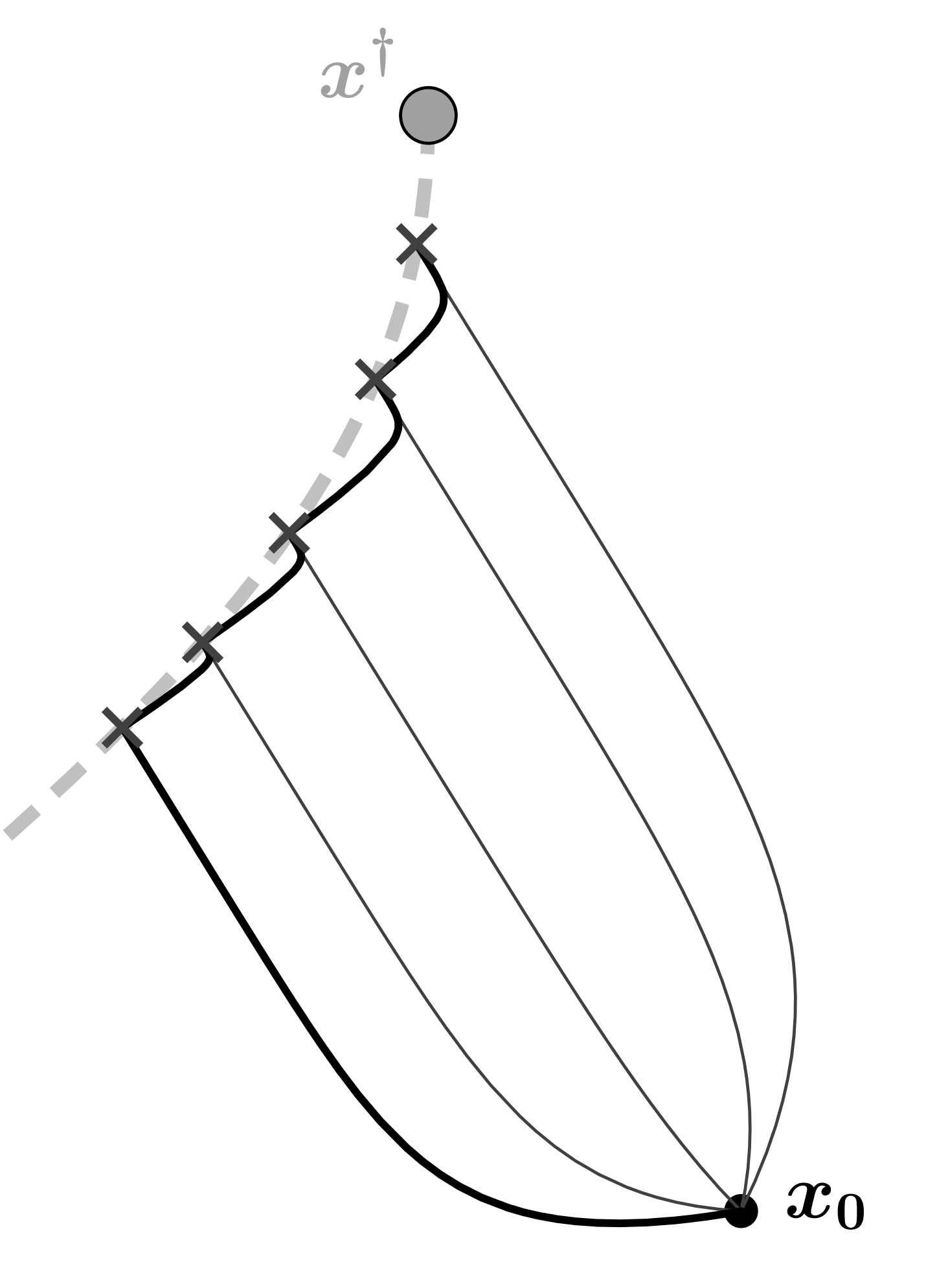}
\end{center}
\caption{
Exact same setting than for Figure \ref{F:Landweber vs diagonal landweber}, but here $\lambda_n$ is constant by parts, taking successively its values in  $\{1,0.75,0.5,0.25,0.1\}$.
}
\label{F:Landweber vs warm restart}
\end{figure}

In the optimization setting,  the literature on diagonal methods is vast. Diagonal procedures as in~\eqref{E:DiagonalAlgoGeneralForm} have been the object of various studies since the 70's \cite{Boy74,Kap75,Mar78,Kap79},  considering various algorithms coupled with a large class of penalization methods, such as Tikhonov penalization,  exponential barrier methods, interior methods, or more general principles.
More precisely, diagonal versions of the proximal algorithm have been considered in \cite{Kap75,Kap79,AusCroFed87,AlaLem91,Tos94,Com97,AlvCom02,Cab05},
the diagonal gradient method has been studied in \cite{Pey11}, and the diagonal projected gradient method in \cite{Boy74,Mar78}. 
More recently, a diagonal version of the forward-backward algorithm has been investigated in \cite{Lem88,Lem95,AttCzaPey11,CzaNouPey14}, see also \cite[Section 17.3.2]{YamYukYam11}. The above papers are concerned with convergence of the considered optimization criterion, corresponding to the regularization property for noiseless data in inverse problems. 
Stability and early stopping results are known only for the diagonal Landweber method 
 \cite{Sch98,Ram03,BakKok04,KalNeuSch08}.

A main novelty of our work is considering a dual diagonal approach, since the diagonal methods studied in the literature are  essentially primal\footnote{In \cite{AlvCom02}, 
the authors show that the proposed proximal method can be used to solve the dual problem, but the regularization 
method they consider is the exponential barrier, which is not of interest here.}. 
These latter approaches  are
well suited if the data-fit function $x \mapsto D(Ax;y)$ is ``simple", in the sense that either the proximity operator 
of $x \mapsto D(Ax;y)$ is easy to compute, or $D$ is smooth.  However, these properties might not be satisfied 
by the data-fit functions of interest, see Example~\ref{R:data-fit function list}. In particular,  when the data-fit function $D(\cdot ; y)$  is nonsmooth and $A$ is not orthogonal,  primal algorithms cannot be used.
{As we discuss next, a dual approach is necessary in this case, and requires the regularizer $R$ to be strongly convex. 
Note that, up to now, all studies on iterative regularization algorithms dealt with strongly convex regularizers
 and the least squares as the loss function, so the novelty in our approach is that it allows to extend the 
 iterative regularization principle to a large family of regularizers/loss functions. } 
 
 We point out that our analysis  builds on ideas and results recently developed to solve penalized problems \eqref{e:Tik}, see  \cite{BriCom11,ChaPoc11,ComDunVu10,ComPes12} and references therein.
 These latter works use  
 duality techniques to introduce  classes of algorithms that   decouple the contribution of $D$, $R$, and $A$. 
We have also been inspired by recent results concerning general diagonal dynamical systems \cite{AttCabCza16}.

\subsection{Main assumptions on the problem}
\label{SS:main assumptions}
Before describing the regularization method that we propose, we introduce the main assumptions on the constituents of the problem. 
Throughout the paper we make the simplifying assumption that there exists $\bar x\in X$ satisfying~\eqref{e:ideal}. 
The next assumption concerns the data-fit function $D$, and in particular its geometry, which is 
characterized by the notion of conditioning function, introduced in Definition \ref{D:conditioning}.

\smallskip

\noindent\textbf{Assumption (AD) on the data-fit function:}

\begin{myitemize} 
	\item[(AD1)] $D: Y\times Y \longrightarrow \left[0,+\infty\right]$, and
	\[	
	(\forall (u,y)\in Y^2)\quad D(u;y)=0 \iff u=y.
	\]	
	\item[(AD2)] for every $y\in Y$, $D_y:=D(\cdot;y)$ decomposes as 
	$$D_y= \psi_y \ \# \ \phi_y, $$
where $\phi_y \in \Gamma_0(Y)$, and $\psi_y=J_y+\frac{\sigma_{\psi}}{2}\|\cdot\|^2$ for some  $J_y\in\Gamma_0(Y)$ and $\sigma_{\psi}\in\R_{++}$.
	\item[(AD3)] $D(\cdot; \bar y)$ is coercive and
	$p$-well conditioned for some $p\in\left[1,+\infty\right[$, with conditioning modulus $\bar{m}$.
\end{myitemize}
Assumption (AD2) is equivalent to $D_y \in \Gamma_0(Y)$, but with the structural decomposition $D_y=\psi_y \ \# \ \phi_y$ 
we are able to detect its (possible) strongly convex component.  We will see later that  $\psi_y$ and $\phi_y$ 
 play a different role in our algorithm. 
This decomposition is not a restriction, since one can always take $\psi_y=\delta_{\{0\}}$, which is, for every $\sigma\in\R_{++}$,
$\sigma$-strongly convex, allowing the general form $D_y = \phi_y \in \Gamma_0(Y)$. 
For instance, all the data-fit functions listed 
in Example~\ref{R:data-fit function list} admit a trivial decomposition in which either $\psi_y$ or $\phi_y$
coincide with $\delta_{\{0\}}$, except for the Huber data-fit function, which can be equivalently written as 
$$
(\forall u \in \mathbb{R}^d)\quad H_\sigma(u-y)=\left( \Vert \cdot \Vert_1 \# \frac{\sigma}{2}\Vert \cdot - y \Vert^2 \right)(u).
$$
 Note that we assume the strong convexity constant $\sigma_\psi$ to be independent of $y$, which is always the 
case in the examples considered in Example~\ref{R:data-fit function list}.

Concerning (AD3), we remark that the coercivity of $D_{\bar y}$ is always satisfied in the finite dimensional setting, 
since, by (AD1), the zero level set of $D_y$ is nonempty and 
bounded~\cite[Proposition 11.12]{BauCom}.
The $p$-well conditioning assumption is satisfied for all the data fidelities considered in~Example~\ref{R:data-fit function list}, 
and their conditioning modulus can be easily computed, see Lemmas~\ref{l:cond_moduli}
and \ref{L:well conditionning of Kullback Leibler} in Appendix~\ref{S:Appendix2}.  All the mentioned losses are $2$-well conditioned, 
except for  the $L^1$ norm, which is $1$-well conditioned.

\smallskip

\noindent \textbf{Assumption (AR) on the regularizer:}

\begin{myitemize}
	\item[(AR1)] $R$ is $\sigma_R$-strongly convex, with $\sigma_R\in\R_{++}$,  
	\item[(AR2)] $\bar x \in \dom R$.
\end{myitemize}
Assumption $(AR1)$ plays a key role in our approach, which is based on the solution of the dual problem: indeed, strong convexity
is necessary for recovering primal solutions from the dual ones. 
Note that sparsity inducing regularizers are not strongly convex in general, since
they are usually the composition of the $L^1$ norm (or some mixed norm) with a linear operator. 
But we can enforce assumption (AR1), and thus apply our algorithm by adding a strongly convex 
quadratic term to the original regularizer, thus using a form of elastic-net penalty 
\cite{ZouHas05}. 
Assumption (AR2), combined with (AD1), implies that the 
ideal problem \eqref{e:P} with $y=\bar y$ has a solution and is equivalent to
\begin{equation*}
\label{E:primal ideal problem reduced}
\text{minimize} \ \left\{ R(x) \ | \ Ax=y \right\}.
\end{equation*}

\subsection{A primal-dual diagonal method}
\label{SS:primal dual for 3D}

As announced in  Section~\ref{sec:DTR}, our regularization method 
is a diagonal descent algorithm on the dual.
Given $\lambda\in\R_{++}$, we start by introducing the Fenchel-Rockafellar dual of  problem \eqref{e:Tik}
\cite[Definition~15.19]{BauCom}:
\begin{equation}
\label{E:dual penalized problem}
\tag{$D_\lambda$} \quad \minimize{u \in Y} \ R^*(-A^*u) + \frac{1}{\lambda} D_y^*(\lambda u).
\end{equation}
It is known that \eqref{e:Tik} converges, in an appropriate sense, to \eqref{e:P} as $\lambda$ goes to zero \cite{Att96}.
We will show in Proposition \ref{P:dissipativity} that, when $y=\bar y$, \eqref{E:dual penalized problem} converges to the dual problem
\begin{equation}
\label{e:D}
\tag{$D$} \quad \minimize{u \in Y} R^*(-A^*u) + \langle  \bar y , u \rangle.
\end{equation}
The decomposition we made explicit in (AD2) allows to express $D_{{y}}^*$ as the sum 
of a smooth and a $\Gamma_0(Y)$ component.
More precisely, by~\eqref{e:fenchelinfc}, we have 
$$
D_y^*=(\psi_y \ \# \phi_y)^*=\psi_y^*+\phi_y^*,
$$
so that \eqref{E:dual penalized problem} can be rewritten as:
\[
\begin{array}{ccc}
\minimize{u \in Y} \ & \underbrace{R^*(-A^*u) + \frac{1}{\lambda} \psi_y^*(\lambda u)} + 
& \underbrace{ \frac{1}{\lambda} \phi_y^*(\lambda u)} \\
 & \text{\footnotesize smooth} & \text{\footnotesize nonsmooth}
\end{array}
\]
We underline the fact that $R^*$ and $\psi_y^*$ are Fr\'echet differentiable, 
with their gradient being respectively $\sigma_R^{-1}$ and $\sigma_{\psi}^{-1}$-Lipschitz continuous \cite[Theorem 18.15(v)-(vii)]{BauCom}.
Then, it is natural to solve \eqref{E:dual penalized problem} using a forward-backward method \cite{ComWaj05}, which alternates between gradient 
steps with respect to the smooth part, and proximal steps with respect to the nonsmooth part.
This forward-backward splitting algorithm, coupled with the diagonal principle discussed in Section~\ref{sec:DTR}, takes the following form:
\[
\begin{array}{|l}
{u}_0 \in Y,  \ (\lambda_n)_{n\in\N}, \ \tau\in\R_{++}, \\
w_{n+1}= u_n + \tau A \nabla R^*(-A^* u_n) - \tau \nabla \psi_y^*(\lambda_n u_n), \\
u_{n+1} = \prox_{{\tau\lambda_n^{-1}}\phi^*_y(\lambda_n \cdot)}(w_{n+1}).
\end{array}
\]

\noindent By introducing an auxiliary primal variable, and making use of the Moreau decomposition theorem \cite[Theorem 14.3]{BauCom},
we obtain the final form of our algorithm:

\begin{center}
\begin{tabular}{|l|}
\hline\\[-0.4ex]
{\bf Diagonal Dual Descent (3-D) method}\\[1ex]
Let $(\lambda_n)_{\nin}$ be a sequence in $]0,+\infty[$ decreasing to $0$,\\ 
let ${L}={{\| A \|^2}/{\sigma_R} + {\lambda_0}/{\sigma_{\psi}}}$, and  $\tau\in\left]0, {1}/{{L}}\right]$. \\
Let $ u_0 \in Y$, and for all $\nin$, let\\[1.5ex]

$\begin{array}{l|l}
&  x_{n} = \nabla R^*(-A^*  u_{n}) \\
(\text{3-D})~&  w_{n+1}= u_n + \tau A  x_n - \tau \nabla \psi_y^*(\lambda_n  u_n) \\
&  u_{n+1} =  w_{n+1} - \tau \prox_{(\tau \lambda_n)^{-1} \phi_y} \left( \tau^{-1} w_{n+1} \right)
\end{array}$
\\
\ \\
\hline
\end{tabular}
\end{center}

The above method, dubbed (3-D), is a first order method, in which the main components of the problem ($R,A,\psi_y$, and $\phi_y$) are activated separately. (3-D) requires the computation of the proximity operator of ${\phi_y}$.
By definition, this is an implicit step, and the solution of the minimization problem in 
\eqref{D:proximal operator} is needed. 
However,  in many cases of interest, this proximity operator can be easily computed in closed form \cite{ComPes11}. 
Also, the computation of the gradients $\nabla R^*$ and $\nabla {\psi_y}^*$ is needed in (3-D), which also 
corresponds to  the computation of a proximity operator, as shown in the following.

Let us describe in detail what are the main steps of (3-D) when considering a pair of 
regularizer/data-fit function among the ones discussed in Examples~\ref{E:regularizers list} and~\ref{R:data-fit function list}.
We start with the first step, which involves the strongly convex regularizer $R$:
\begin{itemize}
	\item Let $R=\frac{1}{2}\Vert \cdot \Vert^2$, then $\nabla R^*(x)=x$ for every $x\in X$.
	\item Let $X=\R^d$, and, for every $x\in X$, let $R(x)=\Vert Wx \Vert_1 + \frac{\sigma}{2} \Vert x \Vert^2$, with $W \in \R^{d\times d}$. 
	If $W$ is an orthogonal matrix,  Lemma~\ref{L:gradient dual prox formula} yields
$$
\nabla R^*(x)= W^* \prox_{\sigma^{-1}\Vert \cdot \Vert_1}(\sigma^{-1}Wx),
$$
 where $\prox_{\sigma^{-1}\Vert \cdot \Vert_1}$ is the well-known soft-thresholding operator \cite{DauDefDem04,ComWaj05}. 
 If $W$ is not orthogonal, as it is the case for the total variation, we can only write
 $$
\nabla R^*(x)=  \prox_{\sigma^{-1}\Vert W \cdot \Vert_1}(\sigma^{-1}x),
$$
and the proximity operator have to be computed by a separate procedure.
\end{itemize}
Then, we consider the second step of (3-D) involving $\psi_y$, the strongly convex part of the data-fit function: 
\begin{itemize}
	\item If $\psi_y=\delta_{\{0\}}$, then $\nabla \psi_y^*=0$.
	\item If $\psi_y=\frac{1}{2}\Vert \cdot - y \Vert^2$, then, for every $u\in Y$, $\nabla \psi_y^*(u)=u+y$.
	\item If $X=\R^d$ and, for every $u\in \R^d$, $\psi_y(u)=\alpha_1\Vert u-y \Vert_1+\frac{\alpha_2}{2}\Vert u - y \Vert^2$, then 
	$$\nabla\psi_y^*(u)={y}+\prox_{\alpha_2^{-1}\alpha_1\|\cdot\|_1}(\alpha_2^{-1}u).$$
\end{itemize}
Finally, the third step of (3-D) involves $\phi_y$, the $\Gamma_0(Y)$ part of the data-fit function: 
\begin{itemize}
	\item If $\phi_y=\delta_{\{0\}}$, then $\prox_{\alpha \phi_y}=0$.
	\item If $X=\R^d$ and, for every $u\in\R^d$, $\phi_y(u)=\Vert u - y \Vert_1$, then $\prox_{\alpha \phi_y}(u)= y + \prox_{\alpha \Vert \cdot \Vert_1}(u-y)$.
	\item If $X=\R^d$ and $\phi_y=\KL(y,\cdot)$, the proximity operator of $\phi_y$ can be computed in closed form. Its expression
can be found  in \cite{ChaComPesWaj07} (see also \cite{DupFadSta12}).
\end{itemize}

\subsection{Relationship between (3-D) and other methods}
\label{relationships for 3D}

Before presenting our  main results, we relate (3-D)
 to  algorithms known in the literature.

\begin{remark}[Diagonal Lagrangian methods]
Assume that there exists $G\in \Gamma_0(Y)$ such that for all $(u,y) \in Y^2$, $D(u;y)=G(u-y)$.
Then,  problem~\eqref{e:Tik} can be rewritten
as
\begin{equation*}
 \minimize{Ax-z=y} \ R(x) + \frac{1}{\lambda} G(z).
\end{equation*}
Thanks to its structure, this problem is well suited for  Lagrangian methods.
Introduce then the Lagrangian $L\colon X\times Y^2\times\R_{++}\to \R\cup\{+\infty\}$ and,
for $\tau\in\R_{++}$, 
the augmented Lagrangian $L_\tau\colon X\times Y^2\times\R_{++}\to \R\cup\{+\infty\}$ of this problem, 
being respectively
$$
\begin{array}{lcl}
L(x,z,u;\lambda) & = & R(x) + \frac{1}{\lambda} G(z) + \langle u, Ax - z -y \rangle, \\
L_\tau(x,z,u;\lambda) & = & L(x,z,u;\lambda) + \frac{\tau}{2} \Vert Ax - z-y \Vert^2.
\end{array}
$$
Tseng's Alternating Minimization Algorithm \cite{Tse91}
is applicable and writes as,
 
$$
\begin{array}{|l}
(z_{-1},u_0) \in Y^2, \\
x_n= \underset{x \in X}{\argmin} \ L(x,z_{n-1},u_n;\lambda), \\
z_{n} = \underset{z \in Y}{\argmin} \ L_\tau(x_n,z,u_n;\lambda), \\
u_{n+1}=u_n + \tau (Ax_n - z_n -y) .
\end{array}
$$
The diagonal version of this Alternating Minimization Algorithm, 
where $\lambda$ is replaced by $\lambda_n$, is exactly (3-D) 
applied to $\psi_y=\delta_{\{0\}}$ and $\phi_y=G( \cdot - y)$.

If $G$ is strongly convex, it is not necessary to use the augmented Lagrangian to update  $z_n$.
Instead, we can use a simple Lagrangian method \cite{Uza58}:
$$
\begin{array}{|l}
(z_{-1},u_0) \in Y^2, \\
x_n=\underset{x \in X}{\argmin} \ L(x,z_{n-1},u_n;\lambda), \\
z_{n} = \underset{z \in Y}{\argmin} \ L(x_n,z,u_n;\lambda), \\
u_{n+1}=u_n + \tau (Ax_n - z_{n} -y).
\end{array}
$$
It can be verified that the diagonal version of 
this algorithm coincides with (3-D),  applied to $\psi_y=G( \cdot - y)$ and 
$\phi_y=\delta_{\{0\}}$.
Observe that, thanks to the decomposition we made explicit in (AD2), (3-D) 
unifies the two cases, and generalizes the analysis to a general
data-fit function, such as the Kullback-Leibler divergence, 
not necessarily of the form $G(\cdot-y)$. 
\end{remark}

\begin{remark}[Diagonal Mirror descent]
\label{Ex:diagonal mirror descent} 

\noindent Let $X=\R^d$, and suppose that $D(\cdot;y)=\psi_y=\frac{1}{2}\Vert \cdot - {y} \Vert^2$. 
Let $(x_n,w_n,u_n)_{n\in\N}$ be the sequence generated by (3-D), and define, for
every $\nin$, $x_n^*= -A^*u_n$. Then
$$
\begin{array}{|l}
{x}_0^* \in \im A^* \\
x_n=\nabla R^*(x_n^*)\\
{x}_{n+1}^* ={x}_n^* - \tau A^*(A{x}_n -{y}) - \tau \lambda_n {x}_n^*.
\end{array}
$$
Since $x_n^* \in \partial R(x_n)$, the latter can be seen as a diagonal version of the mirror descent method of  \cite{BecTeb03,BotHei12} applied to \eqref{e:Tik}, with $R$ as a mirror function.
In particular, when $R= \frac{1}{2}\Vert \cdot \Vert^2$, the (3-D) algorithm coincides with
the Diagonal Landweber algorithm of Example \ref{Ex:Diagonal Landweber algo}, 
with an initialization $x_0=x_0^* \in \im A^*$ (see more discussion on this in 
Remark \ref{R:diagonal Landweber non L1}).
\end{remark}

\section{Regularization properties of (3-D)}\label{S:main results}

In this section we present the two main results of this paper. The convergence  of (3-D) for exact data
is studied in Section~\ref{sec:reg} and its stability properties are considered
in Section~\ref{sec:stab}. The corresponding proofs are postponed to Sections~\ref{S:Convergence} and~\ref{S:regularization}, respectively.

\subsection{Regularization}\label{sec:reg}

We consider  the regularization properties of (3-D) in the noiseless case.
From an optimization perspective, this consists in studying the convergence of the algorithm.
To prove convergence of $(x_n)_{n\in\N}$, we need to impose a suitable decay condition on $(\lambda_n)_{n\in\N}$.
More precisely, we impose a summability condition on $(\lambda_n)_\nin$,
which is directly related to the $p$-well conditioning of $D_{\bar{y}}$ assumed in (AD3):
\begin{equation*}
(\lambda_n)_\nin \in \ell^{\frac{1}{p-1}}(\N).
\end{equation*}
Note that, when $p=1$, the notation $1/0$ will stand for $\infty$. In this case, the condition is automatically satisfied, since in the definition of (3-D) it is required that $\lambda_n \downarrow 0$.

\begin{theorem}[Convergence]
\label{T:mainconv}
Let $(x_n,w_n,u_n)_\nin$ be generated by {\em (3-D)} with $y=\bar y$.
Suppose that assumptions {\em (AR)} and {\em (AD)} hold, and suppose that $(\lambda_n)_\nin \in \ell^{\frac{1}{p-1}}(\N)$.
Let $x^\dagger$ be the solution of the problem~\eqref{e:P}.
Then the following three properties are equivalent:

\noindent (i) $\partial R(x^\dagger) \cap \im A^* \neq \emptyset$,

\noindent (ii) the dual problem \eqref{e:D} admits a solution,

\noindent (iii) $(u_n)_{n\in\N}$ is a bounded sequence.

\noindent If one of these properties is satisfied, then the following hold:
\begin{enumerate}
	\item $(u_n)_{n\in\N}$ weakly converges to a solution of \eqref{e:D}.
	\item $(x_n)_{n\in\N}$ strongly converges to $x^\dagger$, with 
\begin{equation}\label{e:smallo}
 \Vert x_n - x^\dagger \Vert=o\left(n^{-1/2} \right).
 \end{equation}
	\item  Let $u^\dagger$ be any solution of problem \eqref{e:D} and let $N\in \N$ 
	be such that $\Vert u^\dagger \Vert \lambda_N \in \inte \dom \bar m^*$. Then,
\begin{equation}\label{E:CV of 3-D precise rates}
\forall n\geq N, \ \Vert x_n - x^\dagger\Vert \leq \dfrac{C}{\sqrt{ n - N}}, \
\end{equation}
\noindent with 
$C^2=\dfrac{1}{\tau \sigma_R }\Vert u_{N} - u^\dagger \Vert^2 
+
\sum\limits_{n=N}^{+\infty} \frac{2}{\sigma_R\lambda_n}\bar{m}^*(\Vert u^\dagger \Vert \lambda_n) .$
\end{enumerate}
\end{theorem}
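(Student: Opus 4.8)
The plan is to recognize (3-D) as a \emph{diagonal forward--backward} iteration on the dual and then run a quasi-Fej\'er analysis anchored at a solution of the limiting dual $(D)$. First I would split the objective of $(D_\lambda)$ as $F_\lambda=g_\lambda+h_\lambda$, with smooth part $g_\lambda(u)=R^*(-A^*u)+\lambda^{-1}\psi_y^*(\lambda u)$ and nonsmooth part $h_\lambda(u)=\lambda^{-1}\phi_y^*(\lambda u)$. Differentiating gives $\nabla g_\lambda(u)=-A\nabla R^*(-A^*u)+\nabla\psi_y^*(\lambda u)$, so the $w_{n+1}$ line of (3-D) is exactly the forward step $w_{n+1}=u_n-\tau\nabla g_{\lambda_n}(u_n)$ and the $u_{n+1}$ line is the backward step $\prox_{\tau h_{\lambda_n}}(w_{n+1})$ rewritten via Moreau's identity; since $\nabla R^*,\nabla\psi_y^*$ are $\sigma_R^{-1}$- and $\sigma_\psi^{-1}$-Lipschitz and $\lambda_n\le\lambda_0$, the constant $L$ in the scheme is a Lipschitz constant of every $\nabla g_{\lambda_n}$, so $\tau\in\,]0,1/L]$ is admissible. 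Writing $E:=D_{\bar y}(\bar y+\cdot)$, (AD1) gives $E\ge 0=E(0)$, so with $F_0(u):=R^*(-A^*u)+\langle\bar y,u\rangle$ (the objective of $(D)$) one gets the key penalization estimate
\[
0\ \le\ F_\lambda(u)-F_0(u)=\tfrac1\lambda E^*(\lambda u)\ \le\ \tfrac1\lambda\,\bar m^*(\lambda\|u\|),
\]
the last bound following from $E\ge\bar m(\|\cdot\|)$ (AD3) and radial conjugation. The equivalence (i)$\Leftrightarrow$(ii) is Fenchel--Rockafellar duality combined with the subdifferential symmetry of Section~\ref{S:Background and notation}: $u^\dagger$ solves $(D)$ iff $-A^*u^\dagger\in\partial R(x^\dagger)$ with $Ax^\dagger=\bar y$.

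Assuming $(D)$ solvable, fix a solution $u^\dagger$. The forward--backward inequality for $F_{\lambda_n}$ with $\tau\le 1/L$, tested at $u^\dagger$, reads $F_{\lambda_n}(u_{n+1})+\tfrac1{2\tau}\|u_{n+1}-u^\dagger\|^2\le F_{\lambda_n}(u^\dagger)+\tfrac1{2\tau}\|u_n-u^\dagger\|^2$. Inserting the two-sided penalization estimate (so that $F_{\lambda_n}(u_{n+1})\ge F_0(u_{n+1})\ge\min F_0$ while $F_{\lambda_n}(u^\dagger)\le\min F_0+\lambda_n^{-1}\bar m^*(\lambda_n\|u^\dagger\|)$) yields
\[
\bigl(F_0(u_{n+1})-\min F_0\bigr)+\tfrac1{2\tau}\|u_{n+1}-u^\dagger\|^2\ \le\ \tfrac1{2\tau}\|u_n-u^\dagger\|^2+\tfrac1{\lambda_n}\bar m^*(\lambda_n\|u^\dagger\|).
\]
The error is summable: $p$-well conditioning gives $\bar m^*(s)\lesssim s^{p/(p-1)}$ near $0$, hence $\lambda_n^{-1}\bar m^*(\lambda_n\|u^\dagger\|)\lesssim\lambda_n^{1/(p-1)}$, which is summable by $(\lambda_n)\in\ell^{1/(p-1)}$ (for $p=1$ the term is eventually $0$ once $\lambda_n\|u^\dagger\|\in\inte\dom\bar m^*$). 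Thus $(\|u_n-u^\dagger\|)$ is quasi-Fej\'er, giving boundedness and closing the equivalence with (iii); conversely, weak cluster points of a bounded $(u_n)$ minimize $F_0$ by lower semicontinuity, so boundedness forces $(D)$ solvable. Telescoping also gives $\sum_n(F_0(u_n)-\min F_0)<\infty$, hence $F_0(u_n)\to\min F_0$.

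Item~1 then follows from Opial's lemma: $(\|u_n-u^\dagger\|)$ converges for every solution, while $F_0(u_n)\to\min F_0$ and weak lower semicontinuity force every weak cluster point to solve $(D)$, so the cluster point is unique. For item~2, the identity $F_0(u_n)-\min F_0=R^*(-A^*u_n)-R^*(-A^*u^\dagger)-\langle\nabla R^*(-A^*u^\dagger),-A^*(u_n-u^\dagger)\rangle$ (using $A\nabla R^*(-A^*u^\dagger)=\bar y$) is a Bregman divergence of $R^*$, and the co-coercivity bound for the $\sigma_R^{-1}$-Lipschitz gradient $\nabla R^*$ gives $\tfrac{\sigma_R}{2}\|x_n-x^\dagger\|^2\le F_0(u_n)-\min F_0$, since $x_n=\nabla R^*(-A^*u_n)$ and $x^\dagger=\nabla R^*(-A^*u^\dagger)$. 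This yields strong convergence $x_n\to x^\dagger$; telescoping the displayed inequality from the index $N$ (for which $\lambda_N\|u^\dagger\|\in\inte\dom\bar m^*$) produces $\sum_{n\ge N}\|x_{n+1}-x^\dagger\|^2\le C^2$ with precisely the stated constant.

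The delicate point, and where I expect the real work to lie, is upgrading this $\ell^2$-summability into the \emph{pointwise} rates of items~2 and~3. Almost-monotonicity up to a summable perturbation is \emph{not} by itself enough to pass from $\sum_n\|x_n-x^\dagger\|^2<\infty$ to the pointwise bound: the crux is to show that $\|x_n-x^\dagger\|$ is (eventually) nonincreasing. Once that is available, $(n-N)\|x_n-x^\dagger\|^2\le\sum_{k>N}\|x_k-x^\dagger\|^2\le C^2$ gives \eqref{E:CV of 3-D precise rates}, and the sharper estimate $\|x_n-x^\dagger\|=o(n^{-1/2})$ follows from the classical fact that a nonincreasing summable sequence $a_n\ge 0$ satisfies $n a_n\to 0$. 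Establishing this monotone behaviour of the primal iterates, which does not transfer automatically from the quasi-Fej\'er property of the dual iterates, is the main obstacle.
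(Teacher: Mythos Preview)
Your approach matches the paper's almost step for step: the forward--backward reading of (3-D), the penalization estimate $F_\lambda-F_0=\lambda^{-1}E^*(\lambda\,\cdot)\le\lambda^{-1}\bar m^*(\lambda\|\cdot\|)$, the quasi-Fej\'er/Opial argument for the dual iterates, and the primal bound $\tfrac{\sigma_R}{2}\|x_n-x^\dagger\|^2\le F_0(u_n)-\min F_0$ are exactly the paper's Propositions~\ref{P:energy estimate}--\ref{P:dissipativity} and Lemmas~\ref{L:upper bound growth condition}, \ref{L:summability lambda modulus}, \ref{L:primal-dual value-iterate bound}.

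The obstacle you flag is the right one, but the resolution is \emph{not} to prove that $\|x_n-x^\dagger\|$ is eventually nonincreasing. The paper sidesteps this by working with a slightly larger majorant that \emph{is} monotone: set
\[
r_n:=F_{\lambda_{n-1}}(u_n)-\min F_0.
\]
Since $F_0\le F_{\lambda_{n-1}}$ (your penalization estimate), you still have $\tfrac{\sigma_R}{2}\|x_n-x^\dagger\|^2\le r_n$. And $r_n$ is decreasing: your forward--backward inequality tested at $u=u_n$ gives $F_{\lambda_n}(u_{n+1})\le F_{\lambda_n}(u_n)$, while your own formula $F_\lambda(u)-F_0(u)=\lambda^{-1}E^*(\lambda u)$ is a difference quotient of the convex function $E^*$ through $0$, hence \emph{nondecreasing in $\lambda$}; thus $F_{\lambda_n}(u_n)\le F_{\lambda_{n-1}}(u_n)$, and $r_{n+1}\le r_n$. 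The same telescoping you already did shows $\sum_{n\ge N} r_n<\infty$. Then $r_n=o(1/n)$ by the standard argument for a nonincreasing summable sequence, giving \eqref{e:smallo}. For the explicit constant in \eqref{E:CV of 3-D precise rates} the paper tracks $\varepsilon_n:=(n-N)r_n+\tfrac{1}{2\tau}\|u_n-u^\dagger\|^2$, checks $\varepsilon_{n+1}-\varepsilon_n\le\lambda_n^{-1}\bar m^*(\lambda_n\|u^\dagger\|)$ from $r_{n+1}\le r_n$ and your one-step inequality, and telescopes from $N$. So the missing idea is simply: shift the monotonicity question from $\|x_n-x^\dagger\|$ to the penalized dual gap $r_n$, where it comes for free from the diagonal structure.
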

We next collect several observations on our convergence result. 

\begin{remark}[On the qualification condition]

\noindent The assumption $\partial R(x^\dagger) \cap \im A^* \neq \emptyset$ is used as a qualification\footnote{Here the term \textit{qualification condition} 
shall be understood as in the optimization literature. It is a sufficient condition ensuring, in our case, strong duality between the problems \eqref{e:P} and \eqref{e:D}.
It should not  be confused with the notion of qualification used in the inverse problem literature, which is a property for a regularization method \cite[Remark 4.6]{EngHanNeu96}. } condition for the optimization
problems \eqref{e:P} and \eqref{e:D}.
When $R$ is the squared norm, it is an instance of a common assumption in the inverse problem literature, known as 
\textit{source condition} (also source-wise representation, or smoothness assumption) \cite{EngHanNeu96}. 
In this more general form, it has been considered in the context of iterative regularization methods in a series of papers, see \cite{BurOsh13,BotHof13} and references therein. 
Observe that this qualification condition is verified as soon as $R$ is continuous at $x^\dagger$, and $\im A$ is closed.\footnote{To see this, it is enough to write the optimality condition of \eqref{e:P} and use the Moreau-Rockafellar Theorem~\cite[Theorem 3.30]{Pey}.}
Thus,  this assumption is always satisfied in a finite dimensional setting. 
\end{remark}

\begin{remark}[On the convergence and  rates] 

\noindent As we already mentioned, primal diagonal splitting methods for solving problem~\eqref{e:P} are considered in  \cite{Cab05,Pey11,AttCzaPey11,AttCzaPey11_2,CzaNouPey14}.
It is proved in these papers that, under a strong convexity assumption, the iterates converge strongly, but no rates of convergence are provided.
To the best of our knowledge, the  available results on convergence rates for a diagonal algorithm are limited to the 
diagonal Landweber algorithm \cite{Sch98,Ram03,BakKok04,KalNeuSch08}.
So, Theorem~\ref{T:mainconv} is the first result establishing convergence rates for the iterates  obtained with such a general diagonal scheme.
Even for primal algorithms, no convergence rates are known for the sequence $(D(Ax_n;\bar y))_{n\in\N}$. 
For our dual scheme,  the sequence of iterates $(x_n)_{\nin}$ is not necessarily contained in the domain of the loss function, therefore
convergence rates cannot be expected without additional assumptions.

The convergence rates obtained for the  iterates can also be compared with those of non-diagonal schemes.
Indeed, when dealing with the exact data $\bar y$, the problem \eqref{e:P} consists in minimizing $R$ over the 
set of solutions of the linear equation $Ax=\bar y$, so one could consider other algorithms that solve this problem. 
One possibility is to use a forward-backward splitting on the dual of \eqref{e:P}, a.k.a. mirror descent or linearized Bregman iteration.
In this case the rate of convergence $O(n^{-1/2})$ for the primal sequence have been obtained in \cite{BurResHe07} (see also references therein).
In this setting, it is possible to accelerate the rate of convergence by using an inertial method on the dual, and get $O(n^{-1})$ for 
the primal sequence \cite{ChaPoc15,RosVil16}. 
The convergence rate $O(n^{-1/2})$ for  the sequence $(D(Ax_n;\bar y))_{n\in\N}$ has been proved using  the cutting plane method in \cite{BecSab14}.
\end{remark}

\begin{remark}[On the decay of the parameters $(\lambda_n)_{\nin}$]

\noindent To get convergence we impose a summability assumption on $(\lambda_n)_{n\in\N}$.
This kind of hypothesis also appears in \cite{Pey11,AttCzaPey11,AttCzaPey11_2,CzaNouPey14} and is a key for obtaining convergence in the primal setting.
Here we would like to highlight that our assumption is easier to deal with than the one discussed in the above mentioned papers.
Indeed, in their primal setting, the authors make an assumption on $(\lambda_n)_\nin$ related to the well-conditioning 
of the data-fit  function $x \mapsto D(Ax;\bar y)$ (see \cite[Example 4.1]{AttCabCza16}).
But it can be difficult to compute the conditioning modulus for a general data-fit function \textit{coupled} with a 
linear operator, and in general such a modulus doesn't exist. For instance this happens when $A$ is ill-conditioned (take for 
instance $\frac{1}{2}\Vert Ax - \bar y \Vert^2$ when $A$ does not have  a closed range).
Our dual approach plays a crucial role here, since it allows us to make an assumption which involves only the data-fit function 
 $u\in Y \mapsto D(u;\bar y)$, and does not depend on the linear operator.
In addition, we point out that we do not need to consider a \textit{slow} decay for $(\lambda_n)_{\nin}$.
Indeed, it is a common assumption for primal diagonal methods to assume that 
$(\lambda_n)_{\nin} \notin \ell^1(\mathbb{N})$ (slow parametrization hypothesis).
See more discussion on this in Remark \ref{R:non L1 discussion}.
\end{remark}

\subsection{Stability}\label{sec:stab}

One of the main advantages of (3-D) is its capability to handle general data-fit functions, and therefore 
to be adaptive to the nature of the noise, see Remark~\ref{rem:bay}.

According to Theorem~\ref{T:mainconv}, the iterates of (3-D) converge to the unique solution of problem \eqref{e:P}
when we have access to the exact datum $\bar y$. Since we are interested in the situation where only a 
noisy version is available, in this section we investigate how the error  on $\bar y$
affects the sequence generated by (3-D). 
More precisely, let $\hat{y}$ be a noisy estimate of $\bar y$ (in a sense that will be made 
precise later). We consider the application of (3-D)  to the perturbed datum $\hat{y}$, that is 
\begin{equation}
\label{e:ndama}
\begin{array}{l|l}
 &  \hat{x}_{n} = \nabla R^*(-A^*  \hat{u}_{n}), \\
&  \hat{w}_{n+1}= \hat{u}_n + \tau A  \hat{x}_n - \tau \nabla \psi_{\hat y}^*(\lambda_n  \hat{u}_n), \\
 &  \hat{u}_{n+1} =  \hat{w}_{n+1} - \tau \prox_{(\tau \lambda_n)^{-1} \phi_{\hat{y}}} \left( \tau^{-1} \hat{w}_{n+1} \right),
\end{array}
\end{equation}
initialized with $\hat{u}_0 \in X$. We then consider the auxiliary sequence obtained applying (3-D) to the ideal
datum $\bar y$, with the same stepsizes, same sequence of parameters $(\lambda_n)_{n\in\N}$, and same initial 
point $u_0=\hat{u}_0$.
If we write 
\begin{equation}\label{E:optimization/error dichotomy}
\Vert \hat{x}_n-x^\dagger \Vert  \leq \Vert \hat{x}_n-x_n \Vert + \Vert x_n-x^\dagger \Vert,
\end{equation}
we immediately see that the analysis of (3-D) as  a regularization method is based on the decomposition of the error in two terms.
The first one is the error due to the noisy data, whose growth depends on the number of iterations, and the amplitude of the error between $\hat y$ and $\bar{y}$.  
The second is an approximation/regularization error, which coincides with the optimization error in the 
noise free case, that we bounded in Theorem~\ref{T:mainconv}. 
This decomposition suggests that when the stability error is of the same order of the optimization error, the iteration should
be stopped. Thus, iterative regularization properties of (3-D), as for  iterative regularization methods, depend on a reliable \textit{early stopping
rule} (see Proposition~\ref{T:stability 3-D}).   
This behavior is known as semiconvergence \cite{EngHanNeu96,BerBoc98}. 
As stressed above, to state this stability result we need to quantify the  error introduced in the problem by the noisy observation $\hat{y}$.
In the case of additive noise, a natural measure is  $\delta=\|\bar y-\hat{y}\|$. 
We next introduce a similar notion, which is tailored for more general noise models. 
It involves the data-fit function, and in particular the proximity operators of $\phi_{\bar y}$ and $J_{\bar y}$ (see (AD2))
and their noisy counterparts $\phi_{\hat{y}}$ and $J_{\hat{y}}$ required in the (3-D)'s steps. 

\begin{definition}
\label{def:deltanoise}
Let Assumption (AD) hold. Let $({y},\hat{y})\in Y^2$,
let $\delta\in\R_{++}$, and let $\theta\in\R_{+}$.
We say that  $\hat{y}$ is a $(\delta,\theta)$-perturbation of $y$ according to $D$, and write $\hat{y} \in S_{\delta,\theta}( y)$,
if the two following conditions are satisfied:
\begin{eqnarray}
\sup\limits_{u \in Y} \ \Vert\prox_{(J_{y/\sigma_{\psi})}}(u) - \prox_{(J_{\hat{y}/\sigma_{\psi})}}(u) \Vert & \leq & \delta,\label{E:stability psi} \\
(\forall \alpha >0) \ \sup\limits_{u \in Y} \ \Vert \prox_{\alpha\phi_{y}}(u) - \prox_{\alpha\phi_{\hat{y}}}(u) \Vert & \leq & \alpha^\theta \delta. \label{E:stability phi}
\end{eqnarray}
\end{definition}
Definition~\ref{def:deltanoise} identifies perturbations of the data that ensure stability of the data-fit function. 
Here, stability is measured in terms of sensitivity of the proximity operators of the components of 
$D(\cdot,\cdot)$ with respect to perturbations of the second variable. 
Since the definition is somewhat implicit,  before proving the stability result, we consider 
some specific data-fit functions, and give examples of $\hat{y}$ for 
which the conditions  \eqref{E:stability psi}  and \eqref{E:stability phi} are verified. 

\begin{example}\ 
\label{Ex:stability discrepancy functions}
We show that for the commonly used data-fit functions, 
the set of perturbed data $S_{\delta,\theta}(y)$ can be either characterized, or estimated.  
See Lemmas~\ref{L:prox additive form noise} and~\ref{L:prox KL form noise} in the Appendix for the proof of items (iii-iv).

\smallskip

\noindent (i) if $\psi_y$ (resp. $\phi_y$) is independent of $y$, then \eqref{E:stability psi}   (resp.  \eqref{E:stability phi}) is trivially satisfied for every $\hat{y} \in Y$. 
	This is the case for the function $\delta_{\{0\}}$, and for the $L^1$ norm term appearing in the Huber loss.

\smallskip	
	
\noindent (ii) if $\psi_y=\Vert \ \cdot \ - y \Vert^2/2$, then $J_{y}=\|y\|^2/2-\langle x,y\rangle$, $\sigma_{\psi_{y}}=1$, 
	 and $\Vert\prox_{(J_y/\sigma_{\psi})}(u) - \prox_{(J_{\hat{y}}/\sigma_{\psi})}(u) \Vert=\|y-\hat{y}\|$.
	 The previous computations imply in particular that for the quadratic and Huber data-fit functions (see Example~\ref{R:data-fit function list}), 
	 we recover the usual definition of perturbation and the classical notion of additive noise,  that is,  for every $\theta \geq 0$
\[
S_{\delta,\theta}(y)=\{\hat{y}\in Y\,|\,\|{y}- \hat{y}\|\leq\delta \}.
\]

\smallskip

\noindent (iii) Suppose that $\phi_y = G( \ \cdot \ - y )$, for some $G \in \Gamma_0(Y)$ such that $\argmin G =\{0\}$.
This covers most of the data-fit functions having an additive form: any norm (e.g. the $L^p$ norms for  $p \in [1,+\infty]$), the Huber loss, 
or sums of these functions.
Lemma~\ref{L:prox additive form noise} shows that
$$\sup\limits_{\alpha >0} \ \sup\limits_{u \in Y} \ \Vert \prox_{\alpha\phi_y}(u) - \prox_{\alpha\phi_{\hat{y}}}(u) \Vert = \Vert y - \hat y \Vert.$$
So, if we moreover assume that $\psi_y=\delta_{\{0\}}$, we obtain 
	 \[
S_{\delta,0}(y) = \{\hat{y}\in Y\,|\,\|\hat{y}-y\|\leq \delta \}.
\]
	 
\smallskip

\noindent  (iv) Let $Y=\R^d$, and  $D_y=\phi_y = \KL(y, \ \cdot \ )$. Let $y,\hat{y}\in\mathbb{R}^d_{++}$. Then, for all  $\alpha\in\R_{++}$, 
 \[ 
 \sup\limits_{u \in \R^d} \ \Vert \prox_{\alpha\phi_y}(u) - \prox_{\alpha\phi_{\hat{y}}}(u) \Vert = \sqrt{\alpha} \Vert \sqrt{\hat{y}}-\sqrt{y} \Vert,
 \]
so that
	\[
	S_{\delta,\theta}(y) = 
	\begin{cases}
			\{\hat{y}\in Y\,|\, \hat{y}> 0,\,\|\sqrt{\hat{y}}-\sqrt{y}\|\leq\delta \} & \text{ if } \theta=1/2, \\
			\{y \} & \text{ if } \theta \neq 1/2,
	\end{cases}
	\]
	where  the notation $\sqrt{y}$ shall be understood componentwise.
\end{example}
\begin{theorem}[Existence of early-stopping]
\label{thm:reg} 
Under the same assumptions as in Theorem~\ref{T:mainconv}, assume that the qualification condition
\[
\partial R(x^\dagger) \cap \im A^* \neq \emptyset
\] 
holds.
 Let $\hat{y}\in Y$, and let $(\hat{x}_n,\hat{w}_n,\hat{u}_n)_{\nin}$ be the sequence generated by {\em (3-D)} 
algorithm with $\hat{u}_0=u_0$ and $y=\hat{y}$. Moreover, suppose that
\begin{itemize}
	\item $\hat{y}\in Y$ is a $(\delta,\theta)$-perturbation of $\bar{y}$ according to $D$, with $\delta\in [0,+\infty [$ and  $\theta\in ]0,+\infty[$; 
	\item for every $\nin$, $\lambda_n =\lambda_0/(n+1)^\beta$, for some  $\beta\in]{p-1},+\infty[$.
\end{itemize}
Then there exists $t(\delta)\sim \delta^{-{2}/(3+2\beta\theta)}$ such that for all $c \geq 1$, the early stopping rule $n(\delta)=\lceil c t(\delta) \rceil$ verifies
\[
\Vert \hat{x}_{n(\delta)} - x^\dagger \Vert =O\left(\delta^{\frac{1}{3+2\beta\theta}}\right) \text{ when } \delta \to 0.
\]
\end{theorem}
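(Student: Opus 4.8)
The plan is to build on the error splitting \eqref{E:optimization/error dichotomy}, $\|\hat{x}_n - x^\dagger\| \le \|\hat{x}_n - x_n\| + \|x_n - x^\dagger\|$, and to bound the two summands by power laws in $n$ and $\delta$ before balancing them. The second summand is the pure optimization error on the exact datum $\bar y$, already controlled by Theorem~\ref{T:mainconv}; the first is the noise-propagation error, which I would control through an additive recursion on the dual iterates $e_n := \|\hat{u}_n - u_n\|$.

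First I would treat the optimization error. The hypothesis $\beta > p-1$ ensures $(\lambda_n)_\nin \in \ell^{1/(p-1)}(\N)$, so Theorem~\ref{T:mainconv} applies and, via \eqref{E:CV of 3-D precise rates}, gives $\|x_n - x^\dagger\| \le C/\sqrt{n-N}$. What must be checked is that $C$ is finite: since $D_{\bar y}$ is $p$-well conditioned, $\bar{m}^*(s)$ is of order $|s|^{p/(p-1)}$ near $0$, so for $n \ge N$ the term $\lambda_n^{-1}\bar{m}^*(\|u^\dagger\|\lambda_n)$ is of order $\lambda_n^{1/(p-1)} \sim n^{-\beta/(p-1)}$, which is summable \emph{precisely} when $\beta > p-1$. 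Hence $\|x_n - x^\dagger\| = O(n^{-1/2})$.

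Then I would bound the stability term. Writing the dual step as a forward-backward operator $T_n = \prox_{\tau g_n} \circ (\Id - \tau \nabla f_n)$, where $f_n$ and $g_n$ are the smooth and nonsmooth parts of \eqref{E:dual penalized problem}, the stepsize condition $\tau \le 1/L$ makes $T_n$ nonexpansive. Denoting by $\hat{T}_n$ its noisy counterpart (same $\lambda_n$) and recalling $\hat{u}_0 = u_0$, the triangle inequality gives the \emph{additive} recursion $e_{n+1} \le \|\hat{T}_n \hat{u}_n - T_n \hat{u}_n\| + \|T_n \hat{u}_n - T_n u_n\| \le \eta_n + e_n$, with $e_0 = 0$; nonexpansiveness is exactly what prevents an exponential blow-up of the error. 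The per-step perturbation $\eta_n$ splits into a gradient part and a prox part. The gradient part comes only from $\nabla \psi_y^*$, which by Lemma~\ref{L:gradient dual prox formula} (with $W = \Id$) equals $\prox_{\sigma_\psi^{-1} J_y}(\sigma_\psi^{-1}\cdot)$, so \eqref{E:stability psi} bounds it by $\tau\delta$. The prox part, after the Moreau rewriting already used in (3-D), reduces to comparing $\prox_{(\tau\lambda_n)^{-1}\phi_{\hat{y}}}$ and $\prox_{(\tau\lambda_n)^{-1}\phi_{\bar{y}}}$, which \eqref{E:stability phi} with $\alpha = (\tau\lambda_n)^{-1}$ bounds by $\tau^{1-\theta}\lambda_n^{-\theta}\delta$. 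Summing $\eta_n = \tau\delta + \tau^{1-\theta}\lambda_n^{-\theta}\delta$ with $\lambda_n = \lambda_0(n+1)^{-\beta}$, the dominant contribution is $\delta\sum_{k<n}\lambda_k^{-\theta}$, of order $\delta\, n^{1+\beta\theta}$; since $\nabla R^*$ is $\sigma_R^{-1}$-Lipschitz and $\|A^*(\hat u_n - u_n)\| \le \|A\| e_n$, this yields $\|\hat{x}_n - x_n\| \le \sigma_R^{-1}\|A\|\,e_n = O(\delta\, n^{1+\beta\theta})$.

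Finally I would combine the two bounds, $\|\hat{x}_n - x^\dagger\| = O(\delta\, n^{1+\beta\theta}) + O(n^{-1/2})$, and optimize over $n$. Setting the two terms of equal order, $\delta\, n^{1+\beta\theta} = n^{-1/2}$, forces $n = \delta^{-2/(3+2\beta\theta)} =: t(\delta)$, and substituting back gives the rate $O(\delta^{1/(3+2\beta\theta)})$; the factor $c \ge 1$ merely rescales $n(\delta) = \lceil c\,t(\delta)\rceil$ and is absorbed into the constants, since both error terms are monotone powers of $n$. The hard part will be step three: proving the additive recursion with the sharp $\eta_n$. This needs the nonexpansiveness of the forward-backward operator used in tandem with a faithful translation of Definition~\ref{def:deltanoise} into operator bounds via the Moreau identity, carefully tracking the scaling $(\tau\lambda_n)^{-\theta}$ — which is exactly what produces the exponent $\beta\theta$ and hence the final rate.
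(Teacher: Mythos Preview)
Your proposal is correct and follows essentially the same route as the paper: the paper's proof is precisely Proposition~\ref{T:stability 3-D} (the additive recursion on $\|u_n-\hat u_n\|$ via nonexpansiveness of the forward step and of the proximal step, combined with the two perturbation bounds \eqref{E:stability psi}--\eqref{E:stability phi}) followed by the balancing of $\delta\, n^{1+\beta\theta}$ against $(n-N)^{-1/2}$. The only difference is cosmetic: the paper carries out the balancing by explicitly minimizing $f_\delta(t)=\delta a\,t^{1+\beta\theta}+b/\sqrt{t-N}$ via Fermat's rule and then tracks the effect of the shift $N$ and of the ceiling $\lceil c\,t(\delta)\rceil$, whereas you argue asymptotically; both lead to the same $t(\delta)\sim\delta^{-2/(3+2\beta\theta)}$ and rate $O(\delta^{1/(3+2\beta\theta)})$.
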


As said before, the key for the proof of Proposition~\ref{T:stability 3-D} is the estimation \eqref{E:optimization/error dichotomy}, where we combine the regularization rates of Theorem \ref{T:mainconv}, and a stability estimate whose proof can be found in Section \ref{S:regularization}:
\begin{equation*}
\Vert x_n - \hat x_n \Vert =O \left( \delta n^{1+\beta \theta} \right).
\end{equation*}

\noindent As can be directly seen, the stability bound depends on the  chosen data-fit function. The dependence is  through the 
exponents $\beta$ and $\theta$, whose choice is restricted by the geometry (see (AD3)) and the stability properties of the data-fit function.  
 In particular, though the best rates are obtained for $\theta=0$, this choice is not always feasible, as 
Example~\ref{Ex:stability discrepancy functions} shows. 
We discuss more in detail the dependence of the iterative regularization 
rates from the two parameters $\beta$ and $\theta$ in the next remark.

\begin{remark}[{On the effect of ($\beta$,$\theta$) on the resulting rate}]\label{R:choice of beta for stability rates}
Our stability result shows that the convergence rates are faster when $\beta\theta$ is close to zero.
For most data-fit functions presented in Example~\ref{Ex:stability discrepancy functions}, in particular the ones having an additive form, we  can
consider errors with $\theta=0$.
In this case, the stopping rule $n(\delta) \sim \delta^{-2/3}$ leads to a rate of convergence $O(\delta^{1/3})$.
It is worth noting that in this setting, both estimates are independent from the choice of the parameter sequence $(\lambda_n)_{n\in\N}$. 
This is not the case if $\theta\neq 0$, e.g. for the Kullback-Leibler divergence, where we need to take $\theta=1/2$. 
For this function, for which the assumption $\lambda_n\in \ell^{1/(p-1)}(\N)$ is satisfied with $p=2$, 
it is possible to reach a convergence rate arbitrarily close to $O(\delta^{1/4})$ by considering $\beta$ arbitrarily close to $1$.  
Our method is, at the best, of  order $O(\delta^{1/3})$. 
When specialized to the square loss, this rate is not optimal. The optimal one, 
which is achieved e.g. for the diagonal Landweber algorithm, is of the same order of the Tikhonov regularization, 
and it is $O(\delta^{1/2})$ \cite[Theorem 6.5]{Ram03}.
It is an open question to know whether our rates can be improved (by a smarter choice of the early stopping rule, as
in \cite{RosVil16}), or if it is not possible to achieve optimal rates in such a general setting. 
\end{remark}

\begin{remark}[{On  early stopping in practice}] 
Theorem~\ref{thm:reg} states
the existence of a stopping time for which a stable reconstruction is achieved, and thus establish that the (3-D) method is
an iterative regularization procedure. 
In particular, this  explains the dependence on the noise of
the warm restart method, often used in practice to speed up Tikhonov regularization. Theorem~\ref{thm:reg} is mainly of theoretical interest, since  
the stopping iteration depends on constants that are not available, and on the noise level $\delta$, which as well is often not  accessible.
However, every parameter selection method used in practice (e.g. discrepancy principle or cross-validation) to choose the regularization parameter in
Tikhonov regularization can be used in this context  as well.
This will be illustrated in the numerical section \ref{S:numerical}.
\end{remark}

\section{Theoretical analysis: convergence result}\label{S:Convergence}

In this section we prove the convergence of (3-D).
 We  assume that $(x_n,w_n,u_n)_{\nin}$ is a sequence generated by the (3-D) algorithm, using the exact data $y=\bar y$.
We introduce the following notation which will be used in the subsequent proofs. For every $\nin$
and every $u\in Y$, 
\begin{itemize}
	\item  $d_n (u):= R^*(-A^*u) + \frac{1}{\lambda_n} D_{\bar y}^*(\lambda_n u)$,
	\item  $d_\infty (u):= R^*(-A^*u) + \langle \bar y,u \rangle$.
\end{itemize}
Here $d_n$ is the the objective function in $\eqref{E:dual penalized problem}$, the dual problem of $\eqref{e:Tik}$, for $\lambda=\lambda_n$, while $d_{\infty}$ is the one appearing in \eqref{e:D}.

Since (3-D) is a diagonal forward-backward applied to the family of dual functions $(d_n)_\nin$, we will use classic properties of the forward-backward 
method to obtain estimates on $(u_n)_{\nin}$.
These estimates combined with the convergence of $d_n$ towards $d_\infty$ yield the convergence of $(u_n)_{\nin}$ to a solution of \eqref{e:D}.
We highlight the fact that the proof of these results can be related to the arguments used in \cite[Section 3.1]{AttCabCza16}.
Finally, from the strong duality between \eqref{e:P} and \eqref{e:D}, we will derive estimates on the primal sequence  $(x_n)_{\nin}$, and its convergence to a solution of \eqref{e:P}.

\begin{proposition}{(Energy estimate)}
\label{P:energy estimate}
Assume {\em (AD)} and {\em (AR)} and let $(x_n,w_n,u_n)_{\nin}$ be
the sequence generated by the {\em (3-D)} method.  
Then, for every $u \in Y$ and every  $\nin$,
\begin{equation*}
\frac{1}{2\tau} \Vert u_{n+1}-u \Vert^2 - \frac{1}{2\tau} \Vert u_n -u \Vert^2 \leq d_n(u) - d_n(u_{n+1}).
\end{equation*}
\end{proposition}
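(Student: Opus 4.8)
The plan is to recognize the (3-D) recursion as a textbook forward--backward step applied to the splitting $d_n = g_n + h_n$, where
\[
g_n(u) := R^*(-A^*u) + \frac{1}{\lambda_n}\psi_{\bar y}^*(\lambda_n u), \qquad h_n(u) := \frac{1}{\lambda_n}\phi_{\bar y}^*(\lambda_n u),
\]
and then to invoke the standard descent inequality for such a step. By \eqref{e:fenchelinfc} one has $D_{\bar y}^* = \psi_{\bar y}^* + \phi_{\bar y}^*$, so $d_n = g_n + h_n$ is exactly the objective of \eqref{E:dual penalized problem}. First I would check that the three lines of (3-D) collapse to $u_{n+1} = \prox_{\tau h_n}(u_n - \tau\nabla g_n(u_n))$. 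The forward part is immediate: since $x_n = \nabla R^*(-A^*u_n)$, the chain rule gives $\nabla g_n(u_n) = -Ax_n + \nabla\psi_{\bar y}^*(\lambda_n u_n)$, whence $w_{n+1} = u_n - \tau\nabla g_n(u_n)$. For the backward part I would compute $h_n^* = \lambda_n^{-1}\phi_{\bar y}$ via the change of variable $w = \lambda_n u$, and apply Moreau's decomposition $\prox_{\tau h_n}(w) = w - \tau\prox_{\tau^{-1}h_n^*}(\tau^{-1}w)$, which reproduces precisely the third line of (3-D).

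Next I would record the regularity of the smooth part. Using that $\nabla R^*$ and $\nabla\psi_{\bar y}^*$ are $\sigma_R^{-1}$- and $\sigma_\psi^{-1}$-Lipschitz respectively, the chain rule shows $\nabla g_n$ is $L_n$-Lipschitz with $L_n = \Vert A\Vert^2/\sigma_R + \lambda_n/\sigma_\psi$. Since $\lambda_n \le \lambda_0$ we get $L_n \le L$, so the prescribed stepsize satisfies $\tau \le 1/L \le 1/L_n$, which is exactly the range needed for the descent lemma to hold at step $n$, uniformly in $n$.

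The core of the proof is then the combination of three inequalities, all evaluated at $(u_n,u_{n+1})$ and an arbitrary $u \in Y$: (a) the descent lemma for $g_n$, using $\tau \le 1/L_n$, bounding $g_n(u_{n+1})$ by $g_n(u_n) + \langle\nabla g_n(u_n), u_{n+1}-u_n\rangle + \frac{1}{2\tau}\Vert u_{n+1}-u_n\Vert^2$; (b) convexity of $g_n$, bounding $g_n(u_n)$ by $g_n(u) + \langle\nabla g_n(u_n), u_n-u\rangle$; and (c) the subdifferential inequality for $h_n$ at $u_{n+1}$, where the prox optimality condition yields $\tau^{-1}(u_n-u_{n+1}) - \nabla g_n(u_n) \in \partial h_n(u_{n+1})$. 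Adding (a) and (b) the value $g_n(u_n)$ cancels; adding (c) the remaining $\nabla g_n(u_n)$ terms cancel, leaving
\[
d_n(u_{n+1}) \le d_n(u) + \frac{1}{2\tau}\Vert u_{n+1}-u_n\Vert^2 + \frac{1}{\tau}\langle u_n - u_{n+1}, u_{n+1}-u\rangle.
\]
I would conclude with the polarization identity $2\langle u_n - u_{n+1}, u_{n+1}-u\rangle = \Vert u_n-u\Vert^2 - \Vert u_{n+1}-u\Vert^2 - \Vert u_n-u_{n+1}\Vert^2$, which cancels the squared-distance term and turns the right-hand side into $d_n(u) + \frac{1}{2\tau}\Vert u_n-u\Vert^2 - \frac{1}{2\tau}\Vert u_{n+1}-u\Vert^2$; rearranging gives the claim.

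As for difficulty, the inequality itself is the classical forward--backward estimate and poses no real obstacle once the splitting is identified. The only genuinely delicate point is the bookkeeping in the setup: verifying through Moreau's theorem and the conjugation $h_n^* = \lambda_n^{-1}\phi_{\bar y}$ that the condensed third line of (3-D) truly is the proximal step $\prox_{\tau h_n}$, and confirming the Lipschitz constant $L_n$ so that the stepsize condition $\tau \le 1/L$ licenses the descent lemma at every iteration.
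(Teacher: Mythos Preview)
Your proposal is correct and follows essentially the same route as the paper: identify the (3-D) update as a forward--backward step on the splitting $d_n = (R_A^* + \Psi_n^*) + \Phi_n^*$, then combine the descent lemma for the smooth part, its convexity, the subdifferential inequality for the nonsmooth part at $u_{n+1}$, and the polarization identity, using $\tau \le 1/L \le 1/L_n$ to absorb the $\Vert u_{n+1}-u_n\Vert^2$ term. The paper orders the ingredients slightly differently (it starts from the square expansion and the prox optimality condition rather than ending with them), but the substance is identical; your extra verification, via Moreau's decomposition and the computation $h_n^* = \lambda_n^{-1}\phi_{\bar y}$, that the third line of (3-D) really is $\prox_{\tau h_n}$ is a welcome piece of bookkeeping that the paper leaves implicit.
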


\begin{proof}
Let us introduce the notation $\Psi_n:=\frac{1}{\lambda_n}\psi_y$, $\Phi_n:=\frac{1}{\lambda_n} \phi_y$ and $R^*_A:= R^* \circ (-A^* )$.
From the definition of (3-D), we have $u_{n+1} = \prox_{\tau \Phi_n^* }(w_{n+1})$, which implies
\begin{eqnarray}
0 &\in & \partial \Phi_n^*(u_{n+1}) + \dfrac{u_{n+1}-w_{n+1}}{\tau} \label{ee1} \\
 &=& \partial \Phi_n^*(u_{n+1}) + \dfrac{u_{n+1}-u_{n}}{\tau} + \nabla R_A^*(u_n) + \nabla \Psi_n^*(u_n). \nonumber
\end{eqnarray}
By developing the squares, we obtain
\begin{eqnarray}
\frac{1}{2\tau} \Vert u_{n+1}-u \Vert^2 - \frac{1}{2\tau} \Vert u_n -u \Vert^2
=  -\frac{1}{2\tau} \Vert u_n - u_{n+1} \Vert^2 + \left\langle \frac{u_n - u_{n+1}}{\tau}, u - u_{n+1} \right\rangle.
\label{ee2} 
%& = & -\frac{1}{2\tau} \Vert u_n - u_{n+1} \Vert^2 + \left\langle \frac{u_n - u_{n+1}}{\tau}, u - u_{n+1} \right\rangle. \nonumber
\end{eqnarray}
Once combined with \eqref{ee1}, this gives, for some $u_{n+1}^* \in \partial \Phi_n^*(u_{n+1})$,
\begin{eqnarray}\label{ee3}
 & \dfrac{1}{2\tau} \Vert u_{n+1}-u \Vert^2 - & \frac{1}{2\tau} \Vert u_n -u \Vert^2 \\
  = & -\dfrac{1}{2\tau} \Vert u_n - u_{n+1} \Vert^2 
 & + \left\langle u_{n+1}^*, u - u_{n+1} \right\rangle  + \langle \nabla (R_A^* +\Psi_n^*)(u_n),u-u_{n+1} \rangle. \nonumber
\end{eqnarray}
Convexity of $\Phi_n^*$ yields
\begin{equation}\label{ee4}
\left\langle u_{n+1}^*, u - u_{n+1} \right\rangle \leq \Phi_n^*(u) - \Phi_n^*(u_{n+1}).
\end{equation}
We  also have
\begin{eqnarray*}\label{ee5}
  \langle   \nabla (R_A^* +\Psi_n^*)(u_n), u-u_{n+1} \rangle
 = \langle  \nabla(R_A^* +\Psi_n^*)(u_n),  u-u_{n} \rangle 
 - \langle \nabla(R_A^* +\Psi_n^*)(u_n),u_{n+1} - u_n \rangle,
\end{eqnarray*}
where the convexity of $R_A^* +\Psi_n^*$ gives
\begin{eqnarray}
\langle \nabla(R_A^* +\Psi_n^*)(u_n),u-u_{n} \rangle 
\leq (R_A^* +\Psi_n^*)(u) - (R_A^* +\Psi_n^*)(u_n), \label{ee6}
\end{eqnarray}
and the Descent Lemma \cite[Lem. 1.30]{Pey} applied to $R_A^* +\Psi_n^*$ (whose gradient is $L$-Lipschitz continuous) 
implies
\begin{eqnarray}
 - \langle \nabla(R_A^* +\Psi_n^*)(u_n),u_{n+1} - u_n \rangle
\leq  \frac{L}{2}\Vert u_{n+1} - u_n \Vert^2 - (R_A^* +\Psi_n^*)(u_{n+1}) + (R_A^* +\Psi_n^*)(u_n).  \label{ee7} 
\end{eqnarray}
By inserting \eqref{ee4}, \eqref{ee6} and \eqref{ee7} into \eqref{ee3}, we finally obtain
\begin{eqnarray}\label{ee8}
\frac{1}{2\tau} \Vert u_{n+1}-u \Vert^2 - \frac{1}{2\tau} \Vert u_n -u \Vert^2 
  \leq  \left( \frac{L}{2} - \frac{1}{2\tau} \right) \Vert u_n - u_{n+1} \Vert^2 
 + d_n(u) - d_n(u_{n+1}). 
\end{eqnarray}
The conclusion follows from the assumption $\tau \leq \frac{1}{L}$.
\end{proof}

\begin{proposition}{(Dissipativity)}
\label{P:dissipativity}
Assume {\em (AD)} and {\em (AR)} and let $(x_n,w_n,u_n)_{\nin}$ be
the sequence generated by the {\em (3-D)} method.  
Then,
\begin{itemize}
	\item[(i)] for all $u \in Y$, $d_n(u) \downarrow d_\infty(u)$ as $n \to + \infty$,
	\item[(ii)] $d_n(u_{n+1}) \downarrow \inf d_\infty  \in \R$ as $n \to + \infty$.
\end{itemize} 
\end{proposition}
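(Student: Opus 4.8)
To prove the dissipativity proposition I would treat the two items separately, since (i) is a pointwise statement about the family $(d_n)_\nin$ depending only on the geometry of $D_{\bar y}^*$ near the origin, whereas (ii) additionally exploits the algorithmic energy estimate of Proposition~\ref{P:energy estimate}. Throughout I would use that $\lambda_n\downarrow 0$ and the identity $D_{\bar y}^*(0)=\sup_w\{-D_{\bar y}(w)\}=-\inf D_{\bar y}=0$, which follows from (AD1).

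For item (i), fix $u\in Y$. The only $n$-dependent term in $d_n(u)$ is $g_n(u):=\lambda_n^{-1}D_{\bar y}^*(\lambda_n u)$, so it suffices to show that $t\mapsto t^{-1}D_{\bar y}^*(tu)$ decreases to $\langle\bar y,u\rangle$ as $t\downarrow 0$. Monotonicity is free: the map $\phi(t):=D_{\bar y}^*(tu)$ is convex with $\phi(0)=0$, hence its difference quotient $t\mapsto \phi(t)/t$ is nondecreasing, so $g_n(u)$ is nonincreasing in $n$. To identify the limit, which equals the one-sided directional derivative $(D_{\bar y}^*)'(0;u)$, I would argue as follows. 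The Fenchel inequality evaluated at $\bar y$ gives $D_{\bar y}^*(tu)\geq \langle tu,\bar y\rangle-D_{\bar y}(\bar y)=t\langle\bar y,u\rangle$, so $\phi(t)/t\geq\langle\bar y,u\rangle$, which already yields the lower bound. For the matching upper bound I would use the subdifferential inversion rule recalled in Section~\ref{S:Background and notation} together with (AD1) to compute $\partial D_{\bar y}^*(0)=\argmin D_{\bar y}=\{\bar y\}$; then, invoking the coercivity of $D_{\bar y}$ in (AD3), which ensures $0\in\inte\dom D_{\bar y}^*$, the directional derivative at $0$ is the support function of the singleton $\{\bar y\}$, i.e. $(D_{\bar y}^*)'(0;u)=\langle\bar y,u\rangle$. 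Combining the two bounds gives $g_n(u)\downarrow\langle\bar y,u\rangle$, hence $d_n(u)\downarrow d_\infty(u)$.

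For item (ii), I would first establish monotonicity of $n\mapsto d_n(u_{n+1})$. Testing the energy estimate of Proposition~\ref{P:energy estimate} at $u=u_n$ yields $d_n(u_{n+1})\leq d_n(u_n)$, and combining this (applied at index $n+1$) with the pointwise inequality $d_{n+1}\leq d_n$ from item (i) evaluated at $u_{n+1}$, I get $d_{n+1}(u_{n+2})\leq d_{n+1}(u_{n+1})\leq d_n(u_{n+1})$, so the sequence is nonincreasing. Finiteness of $\inf d_\infty$ follows from (AR2): the Fenchel inequality $R^*(-A^*u)\geq -\langle u,\bar y\rangle-R(\bar x)$ gives $d_\infty\geq -R(\bar x)$, while $\inf d_\infty\leq d_\infty(0)=R^*(0)<+\infty$ by strong convexity of $R$. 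The lower bound on the limit is immediate: since $d_n\geq d_\infty$, one has $d_n(u_{n+1})\geq\inf d_n\geq\inf d_\infty$. For the upper bound I would telescope the energy estimate over $n=0,\dots,N-1$ against an arbitrary fixed $u$, bound the telescoping term below by discarding $\|u_N-u\|^2$, and rearrange to $\sum_{n=0}^{N-1}d_n(u_{n+1})\leq \sum_{n=0}^{N-1}d_n(u)+\tfrac{1}{2\tau}\|u_0-u\|^2$. Dividing by $N$ and letting $N\to\infty$, the Cesàro mean of the convergent sequence $(d_n(u))_n$ tends to $d_\infty(u)$, the remainder term vanishes, and the Cesàro mean of the monotone sequence $(d_n(u_{n+1}))_n$ tends to its own limit; hence $\lim_n d_n(u_{n+1})\leq d_\infty(u)$ for every $u$, so $\lim_n d_n(u_{n+1})\leq\inf d_\infty$. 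Together with the lower bound this gives the desired equality $d_n(u_{n+1})\downarrow\inf d_\infty\in\R$.

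The main obstacle is the precise identification of the pointwise limit in (i): the lower bound $\phi(t)/t\geq\langle\bar y,u\rangle$ is trivial, but the matching upper bound genuinely requires differentiability-type information on $D_{\bar y}^*$ at the origin, and this is exactly where both hypotheses on $D$ enter — coercivity (AD3) to place $0$ in the interior of $\dom D_{\bar y}^*$ (so that the support-function formula for the directional derivative is valid), and (AD1) to pin down $\partial D_{\bar y}^*(0)=\{\bar y\}$. In (ii) the only subtle step is the upper bound, where averaging in the Cesàro sense is what allows the telescoped estimate to detect $\inf d_\infty$ despite the test point $u_{n+1}$ varying with $n$.
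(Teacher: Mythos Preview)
Your proof is correct and follows essentially the same route as the paper: for (i) you use the convex difference-quotient monotonicity together with the identification of the directional derivative of $D_{\bar y}^*$ at $0$ as the support function of $\partial D_{\bar y}^*(0)=\{\bar y\}$ (invoking coercivity from (AD3) to place $0\in\inte\dom D_{\bar y}^*$), and for (ii) you combine the energy estimate at $u=u_n$ with the pointwise monotonicity of $(d_n)$ to get the decrease, then telescope and pass to Ces\`aro means to identify the limit as $\inf d_\infty$. The only cosmetic differences are that the paper works with the shifted sequence $r_n:=d_{n-1}(u_n)-\inf d_\infty$ and does not isolate your explicit Fenchel-inequality lower bound in (i), going straight to the directional-derivative identification.
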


\begin{proof}
(i): Let $u \in Y$.
It is enough to show that the real-valued function
\begin{equation}\label{d1}
\lambda \in ]0,+\infty[ \  \mapsto \frac{1}{\lambda} D_{\bar y}^*(\lambda u)
\end{equation}
is increasing in $\lambda$, and converges to $\langle \bar y,u \rangle$ when $\lambda \to 0$.
Since $D_{\bar y}^*(0)= - \inf D_{\bar y} =0$ by (AD1), the function in \eqref{d1} can be rewritten as
\begin{equation}\label{d2}
 \lambda \in ]0,+\infty[ \ \mapsto \dfrac{D_{\bar y}^*(0 + \lambda u) - D_{\bar y}^*(0)}{\lambda} .
\end{equation}
Convexity of $D_{\bar y}^*$ implies that the quotient in~\eqref{d2} is increasing in $\lambda$~\cite[Prop. 17.2]{BauCom}.
Moreover, the limit of this quotient when $\lambda \to 0$ is, by definition, $\frac{\d D_{\bar y}^*}{\d u}(0)$, the 
directional derivative of $D_{\bar y}^*$ at zero, in the direction $u$.
Assumption (AD3) and \cite[Prop. 14.16, Prop. 16.21 \& Thm. 17.19]{BauCom} implies that $\frac{\d D_{\bar y}^*}{\d u}(0)$
equals the support function of $\partial D_{\bar y}^*(0)$ evaluated at $u$.
But here $\partial D_{\bar y}^*(0) = \argmin D_{\bar y}=\{\bar y\}$ by (AD1), 
which means that the quotient in \eqref{d2} tends to $\langle \bar y,u \rangle$ when $\lambda \to 0$.

(ii): First, we observe that $\inf d_\infty > - \infty$.
To see this, use the Fenchel-Young inequality together with (AR2) to write, for all $u \in Y$:
\begin{eqnarray*}
 d_\infty(u) & = & R^*(-A^*u) + \langle \bar y  , u \rangle \\
  & = &   R^*(-A^*u) -  \langle \bar x  , -A^* u \rangle  \\
  & \geq & - R(\bar x ) >-\infty.
\end{eqnarray*}
Define now, for all $n \geq 1$, $r_n:= d_{n-1}(u_{n}) - \inf d_\infty$, and let us show that $r_n \downarrow 0$.
First, apply Proposition \ref{P:energy estimate} with $u=u_n$ to obtain
\begin{equation}\label{d3}
0 \leq \frac{1}{2\tau } \Vert u_{n+1} - u_n \Vert^2 \leq d_{n}(u_n) - d_n(u_{n+1}).
\end{equation}
Since we showed that the sequence $(d_n)_{\nin}$ is decreasing, $d_n(u_n) \leq d_{n-1}(u_n)$ for every $n\in\N$. 
We deduce then from \eqref{d3} that, for every $\nin$, $0 \leq r_n - r_{n+1}$,
meaning that $(r_n)_{\nin}$ is a decreasing sequence.
It follows from  (i)  that $d_n \downarrow d_\infty$, therefore $r_n \geq d_\infty(u_n) - \inf d_\infty \geq 0$.
So there exists some positive real $\ell \geq 0$ such that $r_n \downarrow \ell$. 

Let us finish the proof by showing that $\ell \leq 0$.
Let $u \in Y$. Proposition~\ref{P:energy estimate} yields, for every $\nin$,
\begin{equation*}\label{d5}
\frac{1}{2\tau} \Vert u_{n+1}-u \Vert^2 - \frac{1}{2\tau} \Vert u_n -u \Vert^2 \leq d_n(u) - d_n(u_{n+1}).
\end{equation*}
Do a telescopic sum on the above inequality and divide by $k \in \N^*$ to derive
\begin{equation}\label{d6}
\frac{1}{k}\sum\limits_{n=0}^{k} d_n(u_{n+1}) - d_n(u) \leq \frac{1}{2\tau k}\Vert u_0 - u \Vert^2.
\end{equation}
On the one hand, the right hand side of \eqref{d6} tends to zero when $k \to + \infty$.
On the other hand, we saw that $d_n(u_{n+1}) - d_n(u)$ tends to $\ell + \inf d_\infty - d_\infty(u)$ when $n \to +\infty$.
By Cesaro's lemma, we can pass to the limit in \eqref{d6} to obtain
\begin{equation*}
\ell + \inf d_\infty - d_\infty(u) \leq 0.
\end{equation*}
Since this inequality is true for any $u \in Y$, we deduce that $\ell \leq 0$.
\end{proof}

In the following, we will need an estimate of the rate of convergence of
$d_n$ to $d_\infty$, in particular once evaluated at some element of $\argmin d_\infty$.
For this, we will exploit the geometry of the data-fit function $D_{\bar y}$, through 
its conditioning modulus $\bar{m}$.

\begin{lemma}
\label{L:upper bound growth condition}
Let {\em (AD)} and {\em (AR)} hold and $(\lambda_n)_{\nin}$ be as in the {\em(3-D)} method.
For every $u^\dagger \in \argmin d_\infty$ and every $\nin$,
\begin{equation}\label{E:upper bound growth condition}
d_n(u^\dagger) - d_\infty(u^\dagger) \leq \frac{1}{\lambda_n} \bar m^*(\Vert u^\dagger \Vert \lambda_n ).
\end{equation}
\end{lemma}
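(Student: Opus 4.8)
The plan is to unfold the definitions and reduce the inequality to a pointwise statement about the Fenchel conjugate $D_{\bar y}^*$. Writing out $d_n$ and $d_\infty$, the term $R^*(-A^*u^\dagger)$ cancels, so that
$$d_n(u^\dagger) - d_\infty(u^\dagger) = \frac{1}{\lambda_n} D_{\bar y}^*(\lambda_n u^\dagger) - \langle \bar y, u^\dagger \rangle.$$
Since $\lambda_n > 0$, after multiplying by $\lambda_n$ the claim becomes equivalent to
$$D_{\bar y}^*(\lambda_n u^\dagger) - \lambda_n \langle \bar y, u^\dagger \rangle \leq \bar m^*(\Vert u^\dagger \Vert \lambda_n).$$
I would prove this by establishing, for \emph{every} $v \in Y$, the bound $D_{\bar y}^*(v) \leq \langle \bar y, v \rangle + \bar m^*(\Vert v \Vert)$, and then specializing to $v = \lambda_n u^\dagger$, for which $\Vert v \Vert = \Vert u^\dagger \Vert \lambda_n$.

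First I would record that, by (AD1), the function $D_{\bar y}$ is nonnegative and attains its minimum value $0$ exactly at $\bar y$. The $p$-well conditioning assumption (AD3), through the growth condition in Definition~\ref{D:conditioning}, therefore gives the global lower bound
$$(\forall x \in Y) \quad \bar m(\Vert x - \bar y \Vert) \leq D_{\bar y}(x) - D_{\bar y}(\bar y) = D_{\bar y}(x).$$
Taking Fenchel conjugates reverses this inequality, so $D_{\bar y}^*(v) \leq g^*(v)$ for all $v$, where $g := \bar m(\Vert \cdot - \bar y \Vert)$. It then remains to compute $g^*$.

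The heart of the argument is the computation of $g^*$. I would translate by $\bar y$: substituting $x = \bar y + z$ into the definition of the conjugate factors out the linear term,
$$g^*(v) = \langle \bar y, v \rangle + \sup_{z \in Y} \left\{ \langle v, z \rangle - \bar m(\Vert z \Vert) \right\}.$$
For the remaining supremum I would use that, at fixed $\Vert z \Vert = r$, the Cauchy--Schwarz bound $\langle v, z \rangle \leq \Vert v \Vert r$ is attained, reducing the sup over $Y$ to a one-dimensional sup over $r \geq 0$; then, since $\bar m$ is even, this coincides with the full scalar conjugate $\bar m^*(\Vert v \Vert) = \sup_{t \in \R}\{\Vert v \Vert t - \bar m(t)\}$ (the supremum is attained at a nonnegative $t$ because $\Vert v \Vert \geq 0$). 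This yields $g^*(v) = \langle \bar y, v \rangle + \bar m^*(\Vert v \Vert)$. Chaining with $D_{\bar y}^* \leq g^*$, substituting $v = \lambda_n u^\dagger$, and dividing by $\lambda_n$ delivers the stated inequality.

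The main obstacle I anticipate is precisely this conjugate computation of the radial function $g$: one has to justify cleanly the reduction from the supremum over the Hilbert space to the scalar conjugate $\bar m^*$, using the evenness of $\bar m$ and the sign of $\Vert v \Vert$ so that the one-dimensional supremum is genuinely $\bar m^*(\Vert v \Vert)$ rather than a truncated variant. Everything else---the cancellation of the $R^*$ term, the order-reversal under conjugation, and the final substitution---is routine.
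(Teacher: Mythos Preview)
Your proposal is correct and follows essentially the same approach as the paper's proof: cancel the $R^*$ term, use the growth bound $D_{\bar y}\geq \bar m(\Vert\cdot-\bar y\Vert)$ from (AD3), reverse it under conjugation, and then identify $(\bar m(\Vert\cdot-\bar y\Vert))^*(v)=\langle \bar y,v\rangle+\bar m^*(\Vert v\Vert)$ before specializing to $v=\lambda_n u^\dagger$. The only cosmetic difference is that the paper invokes \cite[Prop.~13.20 \& Ex.~13.7]{BauCom} for the conjugate of a radially composed function, whereas you carry out that computation by hand via the translation and Cauchy--Schwarz argument; both are the same identity.
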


\begin{proof}
From the definition of $d_n$ and $d_\infty$, we have
\begin{equation}\label{ubg1}
d_n(u^\dagger) - d_\infty(u^\dagger) = \frac{1}{\lambda_n}\left( D_{\bar y}^*(\lambda_n u^\dagger) - \langle {\bar y}, \lambda_n u^\dagger \rangle \right).
\end{equation}
It follows from the definition of conditioning modulus \eqref{E:growth condition}  that, for every $u \in Y$, $D_{\bar y}(u) \geq \bar m(\Vert u-{\bar y}\Vert)$.
This implies, for every $u \in Y$, $D_{\bar y}^*(u) \leq \left(\bar m(\Vert \cdot - {\bar  y} \Vert)\right)^*(u)$ (see \cite[Prop. 13.14]{BauCom}).
Since $\bar m : \R \rightarrow [0,+\infty]$ is an even function, we derive from \cite[Prop. 13.20 \& Ex. 13.7]{BauCom} that
\[
\left(\bar m(\Vert \cdot - {\bar y} \Vert)\right)^*(u) = \bar m^*(\Vert u \Vert) + \langle {\bar y},u \rangle. 
\]
The result follows by taking $u=\lambda_n u^\dagger$.
\end{proof}

\begin{lemma}
\label{L:summability lambda modulus}
If {\em (AD)} holds, then $0 \in \inte \dom \bar{m}^*$.
Moreover, if $(\lambda_n)_{\nin}$ is used in {\em (3-D)} and satisfies $(\lambda_n)_{\nin}\in \ell^{1/(p-1)}(\N)$,
then for every $r\in\R_{++}$, and for every $N \in \N$ such that 
$r \lambda_N \in \inte \dom m^*$, we have:
\[
\sum\limits_{n=N}^{+\infty} \frac{1}{\lambda_n}\bar{m}^*(r \lambda_n) \ < + \infty.
\]
\end{lemma}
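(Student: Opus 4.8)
\noindent\emph{Proof proposal.} The plan is to deduce both assertions from a single one-sided estimate on $\bar m^*$ near the origin: I will show that there are constants $s_0\in\R_{++}$ and $C\in\R_{+}$ such that
\begin{equation*}
(\forall s\in[0,s_0])\quad \bar m^*(s)\le C\,s^{p/(p-1)},
\end{equation*}
with the convention that for $p=1$ the right-hand side is read as $0$ (so that $\bar m^*$ vanishes near $0$). Granting this, the first claim is immediate: $\bar m$ is even, hence so is $\bar m^*$, and the estimate shows $\bar m^*$ is finite on $[-s_0,s_0]$, i.e. $0\in\inte\dom\bar m^*$. For the second claim, since $\lambda_n\downarrow 0$ there is $N'\ge N$ with $r\lambda_n\le s_0$ for all $n\ge N'$; for such $n$ the estimate gives $\tfrac1{\lambda_n}\bar m^*(r\lambda_n)\le C\,r^{p/(p-1)}\lambda_n^{p/(p-1)-1}=C\,r^{p/(p-1)}\lambda_n^{1/(p-1)}$, so the tail $\sum_{n\ge N'}$ is controlled by $\sum_n\lambda_n^{1/(p-1)}<+\infty$, which is exactly the hypothesis $(\lambda_n)_\nin\in\ell^{1/(p-1)}(\N)$. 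The finitely many remaining terms $n\in\{N,\dots,N'-1\}$ are finite because $0$ and $r\lambda_N$ both lie in $\dom\bar m^*$ (the latter by assumption $r\lambda_N\in\inte\dom\bar m^*$), hence so does every $r\lambda_n$ by convexity of the domain.

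It remains to prove the estimate, and here the two inputs of Assumption (AD) enter. The $p$-well conditioning (AD3) supplies the \emph{local} lower bound $\bar m(t)\ge\tfrac\gamma p|t|^p$ on $(-\varepsilon,\varepsilon)$, while the convexity and evenness of $\bar m\in\Gamma_0(\R)$, together with $\bar m(0)=0$ (which follows from $\bar m\ge 0$ and $\bar m(0)\le D_{\bar y}(\bar y)-D_{\bar y}(\bar y)=0$), propagate this into a \emph{global} affine minorant: for $|t|\ge\varepsilon$ one has $\bar m(t)\ge \bar m(\varepsilon)+a(|t|-\varepsilon)$ for any subgradient $a\in\partial\bar m(\varepsilon)$, and $a\ge\bar m(\varepsilon)/\varepsilon>0$ by the increasing-slope property of convex functions. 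I then bound $\bar m^*(s)=\sup_{t}(st-\bar m(t))$ by splitting the supremum (using evenness, over $t\ge 0$) at $t=\varepsilon$. On $[0,\varepsilon]$ I compare with the power $\tfrac\gamma p|\cdot|^p$, whose conjugate is $\tfrac{p-1}{p}\gamma^{-1/(p-1)}s^{p/(p-1)}$ and whose unconstrained maximizer $(s/\gamma)^{1/(p-1)}$ lies in $[0,\varepsilon]$ once $s\le\gamma\varepsilon^{p-1}$; thus the inner supremum is at most that power. On $[\varepsilon,+\infty)$ the affine minorant makes $st-\bar m(t)$ nonincreasing as soon as $s<a$, so the outer supremum is at most $s\varepsilon-\bar m(\varepsilon)$, which is $\le 0$ for $s\le\bar m(\varepsilon)/\varepsilon$. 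Taking the larger of the two and setting $s_0=\min\{\bar m(\varepsilon)/\varepsilon,\gamma\varepsilon^{p-1}\}$ (note $\bar m(\varepsilon)/\varepsilon\le a$) yields the claim with $C=\tfrac{p-1}{p}\gamma^{-1/(p-1)}$.

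The main obstacle is precisely this last step: an \emph{upper} bound on the conjugate requires a lower bound on $\bar m$ valid for \emph{all} $t$, whereas (AD3) only furnishes one near $0$; the role of convexity is to extend it globally, and the delicate point is verifying that the region $|t|\ge\varepsilon$ never realizes the supremum for small $s$, which is exactly where the strictly positive subgradient $a$ is essential. The case $p=1$ is handled by the same splitting, the comparison power being now the linear function $\gamma|\cdot|$, whose conjugate is the indicator of $[-\gamma,\gamma]$: the inner supremum is then $0$ for $s\le\gamma$ and the outer one is negative, giving $\bar m^*(s)=0$ on a neighborhood of $0$. The series consequently has an identically vanishing tail, consistent with the convention $\ell^{1/0}(\N)=\ell^\infty(\N)$, which is automatically satisfied since $\lambda_n\downarrow 0$.
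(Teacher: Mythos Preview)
Your argument is correct and lands on the same key estimate as the paper---namely $\bar m^*\le (\gamma|\cdot|^p/p)^*$ on a neighborhood of $0$---but arrives there by a different mechanism. The paper isolates this comparison as a separate abstract lemma (Lemma~\ref{L:Fenchel local inequality}): from $\bar m\ge f$ on $(-\varepsilon,\varepsilon)$ and $\argmin\bar m=\{0\}$ it deduces $\bar m^*\le f^*$ near $0$, and the proof of that lemma works on the \emph{dual} side, selecting $\eta(t)\in\partial\bar m^*(t)$, invoking continuity of this selection at $0$, and chaining the Fenchel--Young equality for $\bar m$ with the Fenchel--Young inequality for $f$. You instead bound the defining supremum of $\bar m^*$ directly on the \emph{primal} side, splitting at $t=\varepsilon$ and using convexity of $\bar m$ together with $\bar m(0)=0$ to control the region $t\ge\varepsilon$. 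Your route is more elementary and self-contained (no appeal to continuity properties of subdifferential selections); the paper's route is more modular, packaging the local conjugate comparison as a reusable one-dimensional lemma.

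One small point to clean up: you invoke $a\in\partial\bar m(\varepsilon)$, but nothing in (AD) ensures $\varepsilon\in\dom\bar m$, let alone that the subdifferential there is nonempty. The fix is painless: if $\bar m(\varepsilon)=+\infty$ then convexity forces $\bar m\equiv+\infty$ on $[\varepsilon,+\infty)$ and the outer supremum is $-\infty$; if $\bar m(\varepsilon)<+\infty$, the secant inequality $\bar m(t)\ge (t/\varepsilon)\,\bar m(\varepsilon)$ for $t\ge\varepsilon$ (which follows from convexity and $\bar m(0)=0$ without any subgradient) already gives $st-\bar m(t)\le t\bigl(s-\bar m(\varepsilon)/\varepsilon\bigr)\le 0$ for $s\le\bar m(\varepsilon)/\varepsilon$. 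Either way your splitting goes through.
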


\begin{proof}
Assumption (AD) implies  that $\argmin\bar{m} = \{0\}$. 
From  \cite[Prop. 11.12 \& 14.16]{BauCom}, it follows that   $0 \in \inte \dom \bar{m}^*$.
Let $r\in\R_{++}$ and let $N \in \N$ be such that $r \lambda_N \in \inte \dom \bar{m}^*$.
Note that, since $\lambda_n \downarrow 0$, we  have $r \lambda_n \in \inte\dom \bar{m}^*$ for every $n \geq N$.
(AD3) implies that $\bar{m} \geq f$ on $]-\eps,\eps[$, where here $f:={\gamma}\vert \cdot \vert^p/p$.
We derive from Lemma~\ref{L:Fenchel local inequality} (see Appendix) that there exists $\eps'\in\R_{++}$ 
such that $m^* \leq f^*$ on $]-\eps', \eps'[$.
This allows us to easily estimate $S_N(r):= \sum_{n=N}^{+\infty} \frac{1}{\lambda_n}\bar{m}^*(r \lambda_n)$, 
by considering the two cases $p=1$ and $p>1$.\\
If $p=1$, let $M \geq N$ be an integer such that $r \lambda_M < \min\{\gamma,\eps'\}$.
Since in that case $f^*(t)= \delta_{[-\gamma,\gamma]}(t)$, we directly see that
\[
S_N(r) \leq \sum\limits_{n=N}^{M} \frac{1}{\lambda_n}\bar{m}^*(r \lambda_n) <+ \infty.
\]
If $p>1$, let $M \geq N$ be an integer such that $r \lambda_M < \eps'$.
In this case, $f^*(t) = \frac{\gamma^{1-q}}{q} \vert t \vert^q$, where $q=p/(p-1)$.
Then we deduce that
\[
S_N(r) \leq \frac{\gamma^{1-q}}{q} \sum\limits_{n=N}^{M} \frac{1}{\lambda_n}\bar{m}^*(r \lambda_n) + \frac{\gamma^{1-q}r^q}{q} \sum\limits_{n=m}^{+\infty} \lambda_n^{\frac{1}{p-1}},
\]
where the last sum is finite since $\lambda_n\in \ell^{1/(p-1)}(\N)$.
\end{proof}

We are now ready to prove Theorem \ref{T:mainconv}, whose proof  using two main ingredients.
First, we will use the estimations we made on the sequence $u_n$ to prove its weak convergence, 
thanks to the celebrated Opial's lemma (see \cite[Lemma 5.2]{Pey} for a proof):

\begin{lemma}{(Opial)}\label{L:Opial}
Let $S$ be a subset of a Hilbert space $H$, and $(x_n)_{\nin}$ be a sequence in $H$.
Assume that
\begin{enumerate}
	\item for all $x \in S$, the real sequence $(\Vert x_n - x \Vert)_{\nin}$ admits a limit,
	\item every weak limit point of $(x_n)_{\nin}$ belong to $S$.
\end{enumerate}
Then $S \neq \emptyset$ if and only if $(x_n)_{\nin}$ is bounded.
In such a  case,  $(x_n)_{\nin}$ weakly converges to some element  belonging to $S$.
\end{lemma}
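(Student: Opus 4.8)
The plan is to separate the statement into two tasks: first the equivalence between $S\neq\emptyset$ and boundedness of $(x_n)_{\nin}$, and then, in the nonempty case, to upgrade boundedness to genuine weak convergence by proving that $(x_n)_{\nin}$ admits a unique weak cluster point. Both tasks rest only on the two hypotheses and on the reflexivity of the Hilbert space $H$.

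For the equivalence, one direction is immediate: if $S\neq\emptyset$, fix any $x\in S$; hypothesis (1) guarantees that $(\Vert x_n-x\Vert)_{\nin}$ converges, hence is bounded, so $\Vert x_n\Vert\leq\Vert x_n-x\Vert+\Vert x\Vert$ shows $(x_n)_{\nin}$ is bounded. Conversely, assume $(x_n)_{\nin}$ is bounded. Since $H$ is a Hilbert space, hence reflexive, every bounded sequence admits a weakly convergent subsequence $x_{n_k}\rightharpoonup\bar x$; this $\bar x$ is a weak limit point of $(x_n)_{\nin}$, so hypothesis (2) gives $\bar x\in S$, whence $S\neq\emptyset$. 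This establishes the ``if and only if''.

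Assume now these equivalent conditions hold. I would prove that all weak cluster points of $(x_n)_{\nin}$ coincide. Let $\bar x_1,\bar x_2$ be weak limits of two subsequences. By (2) both lie in $S$, so by (1) the limits $\ell_i:=\lim_n\Vert x_n-\bar x_i\Vert$ exist for $i=1,2$. Expanding the squared norms gives, for every $\nin$,
\begin{equation*}
\Vert x_n-\bar x_1\Vert^2-\Vert x_n-\bar x_2\Vert^2 = 2\langle x_n,\bar x_2-\bar x_1\rangle+\Vert\bar x_1\Vert^2-\Vert\bar x_2\Vert^2.
\end{equation*}
The left-hand side converges to $\ell_1^2-\ell_2^2$, so $(\langle x_n,\bar x_2-\bar x_1\rangle)_{\nin}$ converges to some $c\in\R$. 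Passing to the limit along the two subsequences, using that weak convergence permits passing to the limit inside the fixed linear functional $\langle\,\cdot\,,\bar x_2-\bar x_1\rangle$, yields $\langle\bar x_1,\bar x_2-\bar x_1\rangle=c=\langle\bar x_2,\bar x_2-\bar x_1\rangle$. Subtracting these equalities gives $\Vert\bar x_1-\bar x_2\Vert^2=0$, that is $\bar x_1=\bar x_2$.

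Finally, a bounded sequence in a Hilbert space with a single weak cluster point converges weakly to it: were this to fail, some $\varphi\in H^*$ and some $\e>0$ would separate a subsequence from the common cluster value, yet that subsequence, being bounded, would admit a further weakly convergent subsequence whose limit is forced to equal the unique cluster point, a contradiction. The resulting weak limit belongs to $S$ by (2), which finishes the argument. The only delicate points, and the ones I would state most carefully, are the two appeals to weak sequential compactness, which require the reflexivity of $H$, and the passage to the limit inside the inner product, which is legitimate precisely because one of its arguments is held fixed.
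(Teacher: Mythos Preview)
Your proof is correct and is the standard argument for Opial's lemma. Note that the paper does not actually prove this statement; it simply cites \cite[Lemma 5.2]{Pey} for a proof, so there is no in-paper argument to compare against. The approach you give---boundedness from hypothesis (1), existence of a weak cluster point by reflexivity, uniqueness of weak cluster points via the parallelogram-type expansion and hypothesis (2), and the final compactness argument---is precisely the classical proof one finds in the reference the paper cites.
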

Second, we will exploit the strong duality between \eqref{e:P} and \eqref{e:D} to recover 
strong convergence for the primal sequence $(x_n)_{\nin}$ through estimations 
made on the dual one $(u_n)_{n\in\N}$. 
The key result is the following lemma (whose proof is in the Appendix):

\begin{lemma}{(Primal-dual values-iterates bound)}\label{L:primal-dual value-iterate bound}
Let $f \in \Gamma_0(X)$ be $\sigma$-strongly convex, $g \in \Gamma_0(Y)$ and $A\colon X\to Y$
be a bounded linear operator.
Let $x^\dagger$ be the unique minimizer of $p:=f+g\circ A$, and let $d:= f^*\circ(-A^*)+ g^*$.
Then
\begin{equation*}
\argmin d \neq \emptyset \ \Leftrightarrow \ 0 \in \partial f(x^\dagger) + A^* \partial g(A x^\dagger).
\end{equation*}
In that case, for every $u \in Y$ and every $x:=\nabla f^*(-A^*u)$, we have
\begin{equation}\label{E:primal-dual value-iterate bound}
\frac{\sigma}{2} \Vert x - x^\dagger \Vert^2 \leq d(u) - \inf\limits_{u \in Y} d.
\end{equation}
\end{lemma}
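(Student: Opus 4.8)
The plan is to treat the two claims separately: I would derive the equivalence from the Fenchel symmetry rule $x^*\in\partial h(x)\Leftrightarrow x\in\partial h^*(x^*)$, and the quantitative bound from a strengthened (quadratic) Fenchel--Young inequality that exploits the $\sigma$-strong convexity of $f$.

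First I would record two structural facts. Since $f$ is $\sigma$-strongly convex, $f^*$ is everywhere Fr\'echet differentiable with $\sigma^{-1}$-Lipschitz gradient, so $f^*\circ(-A^*)$ is smooth and the subdifferential sum rule for $d=f^*\circ(-A^*)+g^*$ holds with equality: $\partial d(u)=-A\nabla f^*(-A^*u)+\partial g^*(u)$. Moreover $p=f+g\circ A$ is $\sigma$-strongly convex, so $x^\dagger$ is its unique minimizer. For the equivalence, I would then unwind both conditions into the same pair of inclusions. The condition $0\in\partial f(x^\dagger)+A^*\partial g(Ax^\dagger)$ means there is $u\in Y$ with $u\in\partial g(Ax^\dagger)$ and $-A^*u\in\partial f(x^\dagger)$; using the symmetry rule together with differentiability of $f^*$, these rewrite as $x^\dagger=\nabla f^*(-A^*u)$ and $Ax^\dagger\in\partial g^*(u)$, which is exactly $0\in\partial d(u)$, i.e. $u\in\argmin d$. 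Conversely, from $u\in\argmin d$ the equality $0\in\partial d(u)$ yields $A\nabla f^*(-A^*u)\in\partial g^*(u)$; setting $x=\nabla f^*(-A^*u)$ and applying the symmetry rule twice produces $-A^*u\in\partial f(x)$ and $u\in\partial g(Ax)$, whence $0\in\partial f(x)+A^*\partial g(Ax)\subseteq\partial p(x)$, so $x$ minimizes $p$ and thus $x=x^\dagger$ by uniqueness.

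For the bound, I would first establish the strengthened Fenchel--Young inequality: for $\sigma$-strongly convex $f$ and any $x^*$, writing $\bar x=\nabla f^*(x^*)$ so that $x^*\in\partial f(\bar x)$, strong convexity applied at $\bar x$ gives $f(x)+f^*(x^*)-\langle x,x^*\rangle\geq\frac{\sigma}{2}\|x-\bar x\|^2$. Taking $x^*=-A^*u$ and $x=x^\dagger$, so that $\bar x=x$ with $x=\nabla f^*(-A^*u)$, produces $f(x^\dagger)+f^*(-A^*u)+\langle u,Ax^\dagger\rangle\geq\frac{\sigma}{2}\|x-x^\dagger\|^2$; adding the ordinary Fenchel--Young inequality $g(Ax^\dagger)+g^*(u)\geq\langle u,Ax^\dagger\rangle$ collapses the cross terms and yields $p(x^\dagger)+d(u)\geq\frac{\sigma}{2}\|x-x^\dagger\|^2$. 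It then remains to identify $p(x^\dagger)=-\inf d$: in the attainment case any dual minimizer $u^\dagger$ satisfies the KKT inclusions just derived, for which both Fenchel--Young inequalities hold with equality, giving zero duality gap $p(x^\dagger)+d(u^\dagger)=0$; substituting $\inf d=d(u^\dagger)=-p(x^\dagger)$ delivers $\frac{\sigma}{2}\|x-x^\dagger\|^2\leq d(u)-\inf d$.

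The main obstacle I expect is bookkeeping rather than conceptual depth: keeping the signs straight through $-A^*$, correctly proving and applying the strengthened Fenchel--Young inequality, and justifying zero duality gap (strong duality with dual attainment) precisely in the regime $\argmin d\neq\emptyset$. The equivalence is a careful but routine application of conjugate symmetry and the sum rule; the quantitative step is where the constant $\sigma$ genuinely enters, through the quadratic lower bound coming from strong convexity.
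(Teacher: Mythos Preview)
Your proof is correct. The equivalence argument is essentially identical to the paper's: both unwind Fermat's rule for $d$ via the conjugate symmetry $x^*\in\partial h(x)\Leftrightarrow x\in\partial h^*(x^*)$, identify the resulting point with $x^\dagger$ by uniqueness of the primal minimizer, and run the same computation in reverse for the other implication.

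For the quantitative bound the two arguments differ in packaging. The paper introduces the Lagrangian $L(x',z',u')=f(x')+g(z')+\langle u',Ax'-z'\rangle$, picks an auxiliary $z\in\partial g^*(u)$, and bounds $L(x^\dagger,z^\dagger,u)-L(x,z,u)$ from below by $\frac{\sigma}{2}\|x-x^\dagger\|^2$ using strong convexity of $f$ and convexity of $g$; it then evaluates both Lagrangian values via the Fenchel--Young equalities to recover $-\inf d$ and $-d(u)$. Your route bypasses the Lagrangian and the auxiliary $z$: you apply the strengthened Fenchel--Young inequality for $f$ and the ordinary one for $g$ directly, obtaining $p(x^\dagger)+d(u)\geq\frac{\sigma}{2}\|x-x^\dagger\|^2$ in one line, and then invoke zero duality gap at the KKT pair. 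The content is the same (the paper's Lagrangian computation unfolds into exactly your two Fenchel--Young inequalities), but your version is slightly cleaner: it does not require choosing $z\in\partial g^*(u)$, hence sidesteps the implicit case distinction on whether $\partial g^*(u)$ is nonempty that the paper's formulation tacitly needs.
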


\begin{proof}[of Theorem \ref{T:mainconv}]
(i)$\iff$(ii): The equivalence  follows directly from Lemma \ref{L:primal-dual value-iterate bound} with $f=R$ and $g=\delta_{\{y\}}$.

(ii)$\iff$(iii): To prove this equivalence, together with the weak convergence of $(u_n)_{\nin}$ 
towards a minimizer of $d_\infty$, we will apply Opial's lemma with $f=d_\infty$ and $S=\argmin d_\infty$.
We thus only have to verify hypotheses $(1)$ and $(2)$ of Opial's lemma.
We start with hypothesis $(1)$ of Opial's lemma.
Without loss of generality, we can assume $S\neq\varnothing$. Let $u^\dagger \in S$, and let us 
show that the sequence $h_n:= \frac{1}{2\tau} \Vert u_n - u^\dagger \Vert^2$ admits a limit when $n \to +\infty$.
Using successively Propositions \ref{P:energy estimate}, \ref{P:dissipativity},
and Lemma \ref{L:upper bound growth condition}, we obtain
\begin{eqnarray}
 & & h_{n+1} - h_n \label{cod1}\\
  & \leq & d_n(u^\dagger) - d_\infty(u^\dagger) + d_\infty(u^\dagger) - d_n(u_{n+1}) \nonumber \\
& \leq & d_n(u^\dagger) -  d_\infty (u^\dagger) \nonumber \\
 & \leq &  \frac{1}{\lambda_n}\bar m^*(\lambda_n \Vert u^\dagger \Vert).\nonumber
\end{eqnarray}
Lemma~\ref{L:summability lambda modulus} implies that $(h_n)_{\nin}$ is a quasi-Fej\'er sequence, 
and therefore $(h_n)_{\nin}$ is convergent (see for instance \cite[Lem. 3.1]{Com01}).
We now turn to hypothesis $(2)$ of Opial's Lemma: assume that there exists a subsequence $(u_{n_k})_{k\in\N}$
converging weakly to some $u_\infty\in Y$.
By using the lower-semicontinuity of $d_\infty$, we obtain
\begin{equation}\label{cod2}
d_\infty(u_\infty) \leq \liminf_{k \to + \infty} d_\infty(u_{n_k}).
\end{equation}
Moreover, we know from Proposition \ref{P:dissipativity} that $d_n \downarrow d_\infty$, so 
$d_\infty(u_{n_k}) \leq d_{n_k -1}(u_{n_k})$. This, together with the fact that $d_{n-1}(u_n) \to \inf d_\infty$, 
implies that \eqref{cod2} is equivalent to $d_\infty(u_\infty) \leq \inf d_\infty$, meaning that $u_\infty \in S$.

Next, we  focus on the strong convergence of the primal sequence $(x_n)_{\nin}$.
Let $u^\dagger$ be any solution of \eqref{e:D}.
Use Lemma \ref{L:primal-dual value-iterate bound} with $f=R$ and $g=\delta_{\{y\}}$, together with Proposition \ref{P:dissipativity} to obtain
\begin{equation}\label{cod4}
\frac{\sigma_R}{2}\Vert x_n - x^\dagger \Vert^2 \leq d_{n-1}(u_n) - d_\infty(u^\dagger).
\end{equation}
The goal is to obtain an estimate on the rate of convergence to zero of $r_n:=d_{n-1}(u_n) - d_\infty(u^\dagger)$.
By using the same argument as  in \eqref{cod1}, we obtain, for every $\nin$,
\begin{equation}\label{cod3}
h_{n+1} - h_n \leq  \frac{1}{\lambda_n} \bar m^*(\lambda_n \Vert u^\dagger \Vert) - r_{n+1}.
\end{equation}
Lemma \ref{L:summability lambda modulus} ensures that there exists some  $N \in \N$  such that $\Vert u^\dagger \Vert \lambda_N \in \inte \dom \bar m^*$, and also that such integer verifies
\begin{equation}\label{cod5}
S_N(\Vert u^\dagger \Vert):= \sum_{n=N}^{+\infty} \frac{1}{\lambda_n}\bar{m}^*(\Vert u^\dagger \Vert \lambda_n) < + \infty .
\end{equation}
As a consequence, a telescopic sum on \eqref{cod3} gives
\begin{equation*}
\sum\limits_{n=N}^{+\infty} r_{n+1} \leq h_{N} + S_{N}(\Vert u^\dagger \Vert) <+\infty.
\end{equation*}
From Proposition \ref{P:dissipativity} it follows that $r_n$ is decreasing and positive, therefore
\begin{equation*}
0 \leq n r_{2n} \leq \sum\limits_{k=n}^{2n} r_k \overset{n\to +\infty}{\longrightarrow} 0,
\end{equation*}
which means that $r_n=o\left(n^{-1}\right)$.
This, together with \eqref{cod4}, implies that $\Vert x_n - x^\dagger \Vert = o\left( n^{-\frac{1}{2}} \right)$.

To obtain the rates \eqref{E:CV of 3-D precise rates}, we will do a  similar analysis.
Let $\varepsilon_n:=(n-N)r_n + h_n$.
Then, for every $n \geq N$, the inequality $r_{n+1} \leq r_{n}$ and \eqref{cod3} yield
\[
\begin{array}{rcl}
\varepsilon_{n+1} - \varepsilon_n& = & (n- N +1) r_{n+1} - (n- N) r_n + h_{n+1} - h_n \\
 & \leq & r_{n+1} + h_{n+1} - h_n , \\
 & \leq & \frac{1}{\lambda_n} \bar m^*(\lambda_n \Vert u^\dagger \Vert).
\end{array}
\]
Therefore,
\begin{eqnarray*}
(n-N)r_n \leq \varepsilon_n = \varepsilon_{N} + \sum\limits_{k=N}^{n-1} \varepsilon_{k+1} - \varepsilon_k 
\leq  h_{N} + S_{N}(\Vert u^\dagger \Vert).
\end{eqnarray*}
Dividing by $(n-N)$ and using \eqref{cod4}, we finally obtain \eqref{E:CV of 3-D precise rates}.
\end{proof}

\begin{remark}[On the non slow-decay assumption on $\lambda_n$]
\label{R:non L1 discussion}
\noindent A key point in our proof is the fact that we perform a diagonal descent 
method on a sequence of functions $(d_n)_{\nin}$ which is monotonically \textit{decreasing} to $d_\infty$.
This property ensures the Mosco convergence of $(d_n)_{\nin}$ to $d_\infty$, which is essential for viscosity methods \cite{Lem88,AttCabCza16}.
This decreasing property might explain the fact that we do not require $(\lambda_n)_{\nin} \notin \ell^1(\N)$, 
which is instead a standard assumption for diagonal primal methods \cite{Lem95,Pey11,AttCzaPey11,CzaNouPey14}.  
The rationale behind this might be that we do not need to make the link between \eqref{e:Tik} and 
\begin{equation}
\label{e:PC}
\tag{$\text{\u{P}}_\lambda$}  \min \lambda R(x) + D(Ax;y).
\end{equation}
In fact, while \eqref{e:Tik} and \eqref{e:PC} are trivially equivalent for fixed $\lambda$,  things change if $\lambda$  is allowed to move.
When $\lambda \downarrow 0$, the function $ R + \lambda^{-1} D(A \cdot ; y)$ is monotonically increasing to $R + \delta_{y}(Ax)$, while $ \lambda R + D(A \cdot ; y)$ is monotonically decreasing to $\delta_{\dom R} + D(A \cdot;y)$.
So \eqref{e:Tik} converges towards the problem we are interested in (the one in \eqref{e:P}), but it is not decreasing,
while  \eqref{e:PC}   has the desired decreasing property, but converges to the ``wrong'' problem.
To pass from one model to the other, it is necessary for primal diagonal schemes to perform an appropriate change of variable (see \cite[Section 1.2]{Cab04} or \cite[Section 4]{AttCza10}), 
which requires the assumption that $\lambda$ doesn't tend to zero too fast: whence the assumption $(\lambda_n)_{\nin} \notin \ell^1(\N)$ 
(see also \cite[Thm. 2]{ComPeySor08} and the following remark).
In our dual diagonal scheme, we  have the combination of the two desirable properties at the same time: indeed $(d_n)_{\nin}$ is decreasing and
 \eqref{E:dual penalized problem} converges towards the dual of \eqref{e:P}.
\end{remark}

\begin{remark}[On the Diagonal Landweber algorithm]\label{R:diagonal Landweber non L1}

\noindent In light of the previous remark, it is interesting to look again at the Diagonal Landweber algorithm.
As discussed in Remark \ref{Ex:diagonal mirror descent}, the Diagonal Landweber algorithm can be seen as a primal diagonal scheme, 
and  $(\lambda_n)_{\nin} \notin \ell^1(\N)$ is generally assumed to ensure the convergence of $(x_n)_{\nin}$ to $x^\dagger$, which is the minimal norm solution of $Ax=\bar y$.
Otherwise, it is known that without this assumption, the regularizer $R=(1/2)\Vert \cdot \Vert^2$ is ignored, and the sequence might converge to any other solution of $Ax=\bar y$ (see \cite[Proposition 1.2]{Cab04} or \cite[Theorem 2]{ComPeySor08}).
On the other hand, as we mentioned in Remark~\ref{Ex:diagonal mirror descent}, Diagonal Landweber can also be seen as a realization of (3-D), 
where we require $(\lambda_n)_{\nin}\in \ell^1(\N)$ to get convergence to $x^\dagger$.
This seems contradictory at a first sight with the above discussion, and one might wonder why the limit point of the sequence is indeed $x^\dagger$.
In fact, as observed in Remark \ref{Ex:diagonal mirror descent}, (3-D)  requires that we initialize the algorithm with $x_0\in\im A^*$.
This implies that the generated sequence $(x_n)_{\nin}$ will belong to $\im A^*$, which is orthogonal to the affine space of solutions $\{x\in X\,|\, Ax=\bar{y}\}$.
As a consequence, $(x_n)_{n\in\N}$ can only converge to a solution of $Ax=\bar{y}$ belonging also to $\overline{\im A^*}$, which is exactly $x^\dagger$. 
\end{remark}

\section{(3-D) as an iterative regularization procedure}\label{S:regularization}

In this section, $(x_n,w_n,u_n)_{\nin}$ and $(\hat x_n, \hat w_n,\hat u_n)_{\nin}$ are generated by the (3-D) algorithm, 
using the exact data $\bar y$ and the noisy ones $\hat y$, respectively.
We assume here that both sequences have the same initialization.

As suggested by~\eqref{E:optimization/error dichotomy}, showing that (3-D) acts as an iterative regularization procedure
requires a stability estimate, which controls the error propagation $(\|\hat{x}_n-x_n\|)_{\nin}$ in the presence of noisy data.
Analogously to what happen for the classical Landweber iteration \cite{EngHanNeu96}, this can be 
bounded in terms of  the number of iterations and an estimate of the noise. 

\begin{proposition}[Stability]
\label{T:stability 3-D}
Let assumptions (AR) and (AD) hold. Let $\delta\in\R_+$, let $\theta\in\R_{+}$, and let $\hat{y}\in Y$  be
a $(\delta,\theta)$-perturbation of $\bar{y}$ according to $D$.
Then, for all $n\in\N^*$,
\[
\Vert x_n - \hat x_n \Vert \leq  
 \dfrac{ \delta}{\| A \|} \left(  n+\tau^{\theta - 1}\sum\limits_{k=0}^{n-1} \lambda_k^{-\theta}\right).
\]
\end{proposition}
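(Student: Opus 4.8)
The plan is to track the difference between the two dual sequences $(u_n)$ and $(\hat u_n)$ and then transfer the bound to the primal iterates using the nonexpansiveness of $\nabla R^*$. First I would set $e_n := \|u_n - \hat u_n\|$ and derive a recursion for $e_n$ by subtracting the two (3-D) iterations applied to $\bar y$ and $\hat y$ respectively. The three steps of (3-D) contribute three error terms: the gradient step through $A\nabla R^*(-A^*\cdot)$, the gradient step through $\nabla\psi_y^*$, and the proximal step through $\prox_{(\tau\lambda_n)^{-1}\phi_y}$. Since $w_{n+1}$ and $u_{n+1}$ share the same stepsize $\tau$ and the same parameters $\lambda_n$ in both runs, the only sources of discrepancy are (a) the operator mismatch coming from the forward-backward map being $1$-Lipschitz (it is a composition of a firmly nonexpansive gradient step, valid since $\tau\le 1/L$, with a nonexpansive prox), and (b) the explicit perturbation in the $y$-dependent pieces $\psi_y$ versus $\psi_{\hat y}$ and $\phi_y$ versus $\phi_{\hat y}$.

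The key estimates are exactly the two quantities controlled by the $(\delta,\theta)$-perturbation hypothesis. For the $\psi$-term I would use Lemma~\ref{L:gradient dual prox formula} to rewrite $\nabla\psi_y^*$ in terms of $\prox_{J_y/\sigma_\psi}$, so that the bound \eqref{E:stability psi} directly yields a contribution of order $\delta$ per step. For the $\phi$-term, the prox step in (3-D) uses $\prox_{(\tau\lambda_n)^{-1}\phi_y}$, so \eqref{E:stability phi} with $\alpha=(\tau\lambda_n)^{-1}$ produces a contribution of order $(\tau\lambda_n)^{-\theta}\delta=\tau^{-\theta}\lambda_n^{-\theta}\delta$. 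Combining these with the fact that the forward-backward operator is nonexpansive gives a recursion of the shape
\[
e_{n+1} \le e_n + C_\psi\,\delta + C_\phi\,\tau^{-\theta}\lambda_n^{-\theta}\,\delta,
\]
for suitable constants absorbing $\tau$ and $\sigma_\psi$. The Moreau-decomposition form of the last step (the $-\tau\prox$ term) must be unwound carefully to see that the prox perturbation enters with the claimed power of $\tau$.

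Summing this recursion from $0$ to $n-1$ with $e_0=0$ (same initialization) produces $e_n \le \delta\,(C_\psi\, n + C_\phi\,\tau^{-\theta}\sum_{k=0}^{n-1}\lambda_k^{-\theta})$, which matches the stated shape $n + \tau^{\theta-1}\sum_k \lambda_k^{-\theta}$ once the constants are identified. Finally, since $x_n=\nabla R^*(-A^*u_n)$ with $\nabla R^*$ being $\sigma_R^{-1}$-Lipschitz and $\|{-}A^*\|=\|A\|$, one might expect $\|x_n-\hat x_n\|\le (\|A\|/\sigma_R)e_n$; to obtain the clean factor $\delta/\|A\|$ stated in the proposition I would instead track the primal difference directly, noting that the combination of the Lipschitz constant of $\nabla R^*$ with the stepsize constraint $\tau\le 1/L = 1/(\|A\|^2/\sigma_R + \lambda_0/\sigma_\psi)$ is what rescales the bound.

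\textbf{Main obstacle.} The hard part is the bookkeeping in the second paragraph: correctly extracting the exponent $\theta$ and the precise power of $\tau$ from the proximal step written in Moreau-decomposed form, and verifying that the forward-backward map built from $R^*_A+\Psi_n^*$ and $\Phi_n^*$ is genuinely nonexpansive (so that no amplification of $e_n$ occurs beyond the additive perturbation). Getting the constants to collapse into the exact coefficients $1$ and $\tau^{\theta-1}$ in front of the two sums, together with the overall $\delta/\|A\|$ prefactor, is the delicate accounting that the routine-looking recursion conceals.
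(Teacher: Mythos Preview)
Your proposal is correct and follows essentially the same route as the paper: bound $e_n=\|u_n-\hat u_n\|$ by splitting the one-step (3-D) update into a nonexpansive forward-backward part (via $\tau\le 1/L$) and two additive perturbation terms controlled by \eqref{E:stability psi} and \eqref{E:stability phi}, sum the resulting recursion from $e_0=0$, and then pass to the primal using the $\|A\|/\sigma_R$-Lipschitz continuity of $\nabla R^*\circ(-A^*)$ together with $\tau\le\sigma_R/\|A\|^2$ to extract the prefactor $\delta/\|A\|$. The only point you flag as delicate --- recovering the precise power of $\tau$ after unwinding the Moreau form $u_{n+1}=w_{n+1}-\tau\,\prox_{(\tau\lambda_n)^{-1}\phi_y}(\tau^{-1}w_{n+1})$ --- is indeed where the bookkeeping matters, and your awareness of it is exactly right.
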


\begin{proof}
We introduce the notation $ \Psi_n:= \lambda_n^{-1} \psi_{\bar y}$, $ \Psi_n:= \lambda_n^{-1} \phi_{\bar y}$, together with their noisy counterpart $\hat \Psi_n:=\lambda_n^{-1} \psi_{\hat y}$ and $\hat \Phi_n:= \lambda_n^{-1} \phi_{\hat y}$.
By definition of (3-D), and using the triangle inequality:
\begin{eqnarray*}\label{sd1}
 \Vert u_{n+1} - \hat u_{n+1} \Vert
\leq 
\Vert \prox_{\tau    \Phi_n^* }(w_{n+1}) - \prox_{\tau  \Phi_n^* }(\hat w_{n+1}) \Vert
+ \Vert \prox_{\tau \Phi_n^* }(\hat w_{n+1}) - \prox_{\tau \hat \Phi_n^* }(\hat w_{n+1})\Vert. \nonumber 
\end{eqnarray*}
Nonexpansivity of the $\prox_{\tau   \Phi_n^* }$ \cite[Prop. 12.27]{BauCom}, together with the assumption on the noise \eqref{E:stability phi} and Lemma~\ref{L:gradient dual prox formula}, implies
\begin{equation}\label{sd2}
\Vert u_{n+1} - \hat u_{n+1} \Vert \leq \Vert w_{n+1} - \hat w_{n+1} \Vert + \lambda_n^{-\theta} \tau^\theta \delta.
\end{equation}
Let us introduce the notation $R^*_A:=R^* \circ (-A^*)$, and define 
$$T_n : Y \rightarrow Y, \quad u \mapsto T_n u:= u - \tau (\nabla R^*_A(u) + \nabla \psi^*_n(\lambda_n u)).$$
Then we have from the definition of (3-D):
\begin{equation*}\label{sd3}
w_{n+1} - \hat w_{n+1} = T_n u_n - T_n \hat u_n - \tau (\nabla \psi^*_n(\lambda_n \hat u_n) - \nabla \hat \psi^*_n(\lambda_n \hat u_n)).
\end{equation*}
Using the assumption on the noise \eqref{E:stability psi}, we can write
\begin{equation*}
\Vert w_{n+1} - \hat w_{n+1} \Vert \leq \Vert T_n u_n - T_n \hat u_n \Vert + \tau \delta.
\end{equation*}
Since $\nabla R^*_A + \nabla  \psi^*_n(\lambda_n \cdot)$ is $L$-Lipschitz continuous, and because it is assumed in (3-D) that $\tau \leq L^{-1}$, we deduce that $T_n$ is a non-expansive operator \cite[Theorem 18.15]{BauCom}, leading  to the estimation
\begin{equation}\label{sd4}
\Vert w_{n+1} - \hat w_{n+1} \Vert \leq \Vert u_n - \hat u_n \Vert + \tau \delta.
\end{equation}
By combining \eqref{sd2} and \eqref{sd4}, we obtain for all $n \geq 1$
\begin{equation*}
\Vert u_n - \hat u_n \Vert \leq \Vert u_0 - \hat u_0 \Vert + \delta \tau n + \delta \tau^\theta \sum\limits_{k=0}^{n-1} \lambda_k^{-\theta}
\end{equation*}
The fact that $x_n=\nabla R^*(-A^* u_n)$, where $\nabla R^* \circ (-A^*)$ is $\frac{\| A \|}{\sigma_R}$-Lipschitz continuous, implies
\begin{equation*}
 \frac{\sigma_R}{\Vert A \Vert}\Vert x_n - \hat x_n \Vert\leq \Vert u_0 - \hat u_0 \Vert + \delta \tau n + \delta \tau^\theta \sum\limits_{k=0}^{n-1} \lambda_k^{-\theta}
\end{equation*}
Since $ \tau \leq \frac{\sigma_R}{\| A \|^2}$, the  conclusion follows.
\end{proof}
We are now ready to prove our main stability result.
\begin{proof}[of Theorem~\ref{thm:reg}] 
Theorem \ref{T:mainconv} ensures the existence of a solution $u^\dagger \in Y $ for the dual problem \eqref{e:D}.
It follows from Lemma~\ref{L:summability lambda modulus} that there exists $N \in \N$ such that $\Vert u^\dagger \Vert \lambda_N \in \inte \dom \bar m^*$. Then 
we derive from Theorem~\ref{T:mainconv} that 
\[
(\forall n > N)\quad \ \Vert x_n - x^\dagger \Vert \leq \dfrac{b}{ \sqrt{n-N}},  
\]
with $b:=\Vert u_{N} - u^\dagger \Vert^2/(\tau \sigma_R) + S_{N}(\Vert u^\dagger \Vert)/\sigma_R$, where $ S_{N}(\Vert u^\dagger \Vert)$ is defined in \eqref{cod5}.
On the other hand, from Proposition \ref{T:stability 3-D} and the hypothesis on $(\lambda_n)_{n\in\N}$, we have
\[
(\forall \nin)\quad
\Vert x_n - \hat x_n \Vert \leq  
\delta a n^{1+\beta \theta},
\]
with $a=(1+\lambda_0^{-\theta}\tau^{\theta - 1})\Vert A \Vert^{-1}$.
Thus, for every $n > N$,
\begin{equation}\label{trg1}
\Vert \hat{x}_{n} -  x^\dagger \Vert \leq \delta a n^{1+\beta \theta} + \dfrac{b}{ \sqrt{n-N}}.
\end{equation}
The idea now is to derive an early stopping rule $n(\delta)$ by minimizing the right hand side of \eqref{trg1}.
We will achieve this by considering, for $\alpha := \beta\theta$, $T:=N$ and $\delta\in\R_{++}$, the real valued function
\[
f_\delta : t \in ]T , +\infty[ \mapsto f_\delta(t):= \delta a t^{1+\alpha} + \dfrac{b}{\sqrt{t-T}}.
\]
First observe that $f_\delta$ is convex, so we can characterize its minimizers with the Fermat's rule.
This function is differentiable on $]T,+\infty[$, and $f_\delta'(t)=0$ if and only if
\begin{equation}\label{fsr1}
t^\alpha(t-T)^{3/2} = C_1 \delta^{-1}, \ \text{with } C_1:=\frac{b}{2a(1+\alpha)}.
\end{equation}
The function $\eta(t):=t^\alpha(t-T)^{3/2}$ is strictly increasing on $]T,+\infty[$, and is a bijection between $]T,+\infty[$ and $]0,+\infty[$.
So we deduce from \eqref{fsr1} that there exists a unique minimizer for $f_\delta$, given by $t(\delta):=\eta^{-1}\left(C_1 \delta^{-1} \right)$.
Moreover, we also deduce from the relation $\eta(t(\delta))=C_1\delta^{-1}$  that $\delta \mapsto t(\delta)$ is decreasing, and that $t(\delta) \uparrow +\infty$ when $\delta \downarrow 0$.

Now we define an early stopping rule by taking $n(\delta):=\lceil ct(\delta) \rceil$, for some fixed $c \geq 1$, and we want to estimate $f_\delta(n(\delta))$.
For this, start by writing $n(\delta)=c(\delta) t(\delta)$, where $c(\delta) =\frac{\lceil ct(\delta) \rceil}{t(\delta)}$.
From now we assume that $t(\delta) \geq T+1$, which is achieved as soon as $\delta$ is small enough, since  $t(\delta) \uparrow +\infty$.
In particular, we deduce that $c(\delta) \in [c,c+1]$, and this implies that
\begin{eqnarray*}
f_\delta(n(\delta)) & = & \delta a (c(\delta) t(\delta))^{\alpha + 1 } + \frac{b}{\sqrt{c(\delta)t(\delta) - T}} \\
 & \leq & \delta a ((c+1)t(\delta))^{\alpha + 1 } + \frac{b}{\sqrt{t(\delta) - T}}.
\end{eqnarray*}
Now we can use \eqref{fsr1} to write
\begin{equation*}
\frac{1}{\sqrt{t(\delta)-T}}=C_1^{-1/3} \delta^{1/3} t(\delta)^{\frac{\alpha}{3}} ,
\end{equation*}
which gives in turn
\begin{equation}\label{fsr3}
f_\delta(n(\delta))  \leq \delta a ((c+1)t(\delta))^{\alpha + 1 } + bC_1^{-1/3}\delta^{1/3}t(\delta)^{\alpha/3} .
\end{equation}

Now we need to estimate $t(\delta)$.
Let $\gamma:= \frac{2}{3+2\alpha}$; by using \eqref{fsr1}, a simple computation shows that
\begin{equation*}
\frac{\delta^{-\gamma}}{t(\delta)}=C_1^{-\gamma}\left(1-\frac{T}{t(\delta)}\right)^\frac{3}{3+2\alpha}.
\end{equation*}
But we assumed that $t(\delta) \geq T+1$, so 
\begin{equation*}
\frac{\delta^{-\gamma}}{t(\delta)}\geq C_1^{-\gamma}\left(\frac{1}{T+1}\right)^\frac{3}{3+2\alpha},
\end{equation*}
which gives in turn
\begin{equation*}\label{fsr2}
t(\delta) \leq C_2 \delta^{\frac{-2}{3+2\alpha}}, \text{ with } C_2:=C_1^{\frac{2}{3+2\alpha}} (T+1)^{\frac{3}{3+2\alpha}}.
\end{equation*}
The above inequality together with \eqref{fsr3} finally gives
\begin{equation*}
f_\delta(n(\delta))  \leq C_3 \delta^{\frac{1}{3+2\alpha}}, 
\end{equation*}
with $C_3:= a(1+c)^{\alpha +1}C_2^{\alpha +1 } + b C_1^{-1/3}C_2^{\alpha/3}$.
\end{proof}

\section{Numerical results: Deblurring and denoising}\label{S:numerical}

\def\scalegraph{0.15}
\def\scaleimage{0.25}
\def\widthimage{0.19\linewidth}
\def\widthgraph{0.24\linewidth}

\def\widthimagethree{0.32\linewidth}
\def\widthimagefour{0.24\linewidth}
\def\widthimagefive{0.19\linewidth}

In this section we perform  several numerical experiments using  the (3-D) algorithm for image denoising and deblurring. 
We consider problems of the form \eqref{e:P}, involving a data-fit function selected according to the nature of the noise, and  a regularizer promoting some  desired property of the solution.
For all  the experiments, the linear operator $A$ is a blurring operator defined by a Gaussian kernel of size $9\times 9$ and variance $10$.
In  our experiments, we compare two \textit{versions} of (3-D), corresponding to two different choices of the sequence $(\lambda_n)_{n\in\N}$: an online choice, and an a priori choice.

For the online approach, we use the {warm restart} method described in Example~\ref{Ex:warm restart},  called \textit{warm 3D} in the following.
In this case, the sequence $(\lambda_n)_{n\in\N}$ is taken to be piecewise constant, and its decay is determined by a stopping rule.
In practice, we take $N_{wr}$ regularization parameters $\{\Lambda_1,\ldots, \Lambda_{N_{wr}}\}$ uniformly distributed on a logarithmic scale in an interval $[\lambda_{min},\lambda_{max}]$. 
Then, we start with $\lambda_1=\Lambda_1=\lambda_{max}$,
 and, for every $\lambda\in\{\Lambda_i\}_{i=1}^{N_{wr}}$,  we set $d_\lambda(u):=R^*(-A^*u)+D^*_{\hat y}(\lambda u)/\lambda$, and we keep $\lambda_n=\lambda$
 until the stopping rule 
\begin{equation}
\label{e:istopru}
\left\vert \frac{d_\lambda(u_n)-d_\lambda(u_{n-1}) }{d_\lambda(u_n)} \right\vert < \eps_{wr}
\end{equation}
is verified.
This warm 3D method can be considered as a benchmark, since it is one of the most efficient ways to approximate the regularization path corresponding to Tikhonov regularization~\cite{BecBobCan11}.

For the a priori choice, that we call hereafter  \textit{vanilla 3D} method, we consider a strictly decreasing sequence  $(\lambda_n)_{n\in\N}$.
In practice, we take $N_{v}$ regularization parameters $\{\Lambda_1,\ldots, \Lambda_{N_{v}}\}$ uniformly distributed on a logarithmic scale in an interval $[\lambda_{min},\lambda_{max}]$, and set 
for every $n\in\{1,\ldots,N_v\}$, $\lambda_n = \Lambda_n$.
Observe that this choice makes $\lambda_n$ an exponentially decreasing sequence:
\[
\lambda_n = \lambda_{max} \left( \frac{\lambda_{min}}{\lambda_{max}} \right)^{\frac{t-1}{N_v-1}}.
\]
This implies for instance that Theorem \ref{T:mainconv} applies for any choice of loss function. 
Concerning Proposition~\ref{T:stability 3-D}, we already discussed in Remark \ref{R:choice of beta for stability rates} the fact that for most loss functions, no assumptions are required on $\lambda_n$.
Only the Kullback-Leibler divergence requires a slow decreasing sequence to ensure the existence of an early sopping rule in polynomial time.
In practice,  no significant difference was observed with the use of the Kullback-Leibler loss.

\subsection{Introductory example}\label{SS:numerical intro}

\begin{example}\label{Ex:simple example}
We illustrate the behavior of the (3-D) method.
We take $\bar x$ as a $512\times 512$ grayscale image, which is blurred and corrupted by a salt and pepper noise of intensity $35\%$
(see Figure~\ref{F:simple example problem}).
%\vspace{-0.5cm}
\begin{figure}[h]
  \centering
\[
\begin{array}{ccccc}
\includegraphics[width=\widthimagethree]{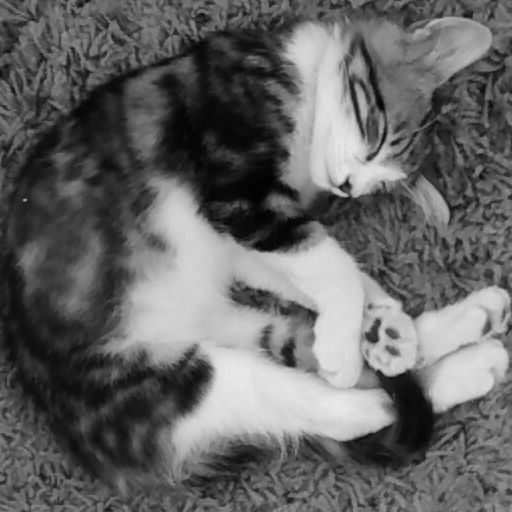}
&
\includegraphics[width=\widthimagethree]{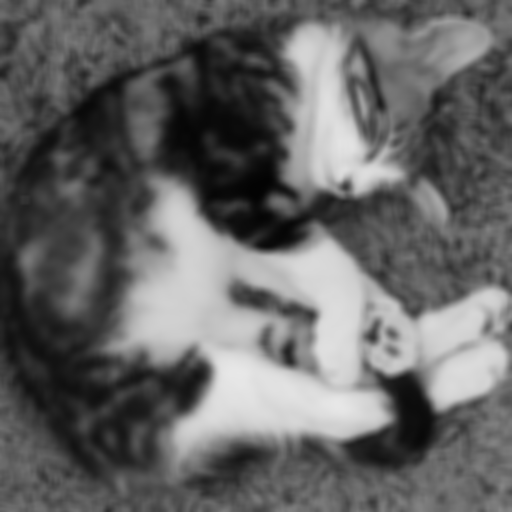}
&
\includegraphics[width=\widthimagethree]{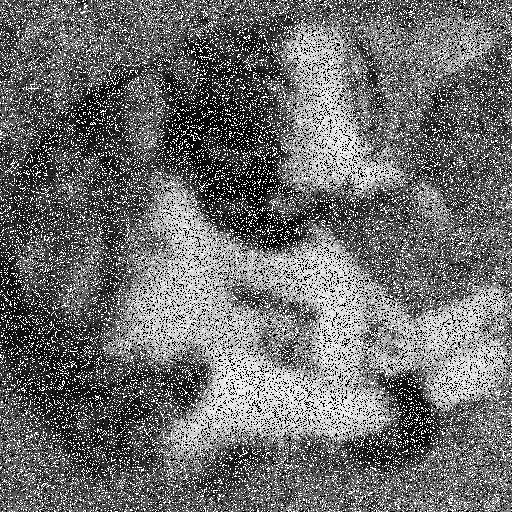}
\end{array}
\]
\caption{\small From left to right: original image, blurred image without noise, blurred image with noise.
}
\label{F:simple example problem}
\end{figure}

\noindent We reconstruct the image by using an $L^1$ data-fit function $D(u,y)=\Vert u-y \Vert_1$, and a regularizer enforcing  sparsity in 
a wavelet dictionary: 
\[
(\forall x\in X)\quad R(x) = \Vert Wx \Vert_1 + \frac{1}{2} \Vert x \Vert^2,
\]
where here $W$ is a Daubechies wavelet transform.
We run the (3-D) algorithm for $(\lambda_{max},\lambda_{min})=(10,10^{-2})$, and take $N_v=1000$, and $N_{wr}=30$, $\eps_{wr}=10^{-5}$.
In Figure \ref{F:simple example iterates}, some iterations of these two algorithms are displayed.

\begin{figure*}[h]
\centering
\[
\begin{array}{ccccc}
\includegraphics[width=\widthimagefive]{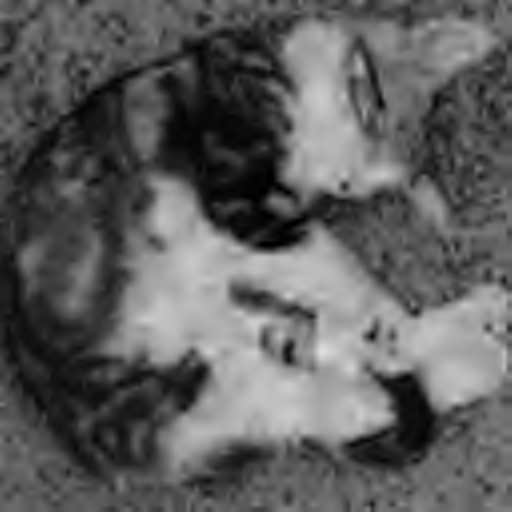}
&
\includegraphics[width=\widthimagefive]{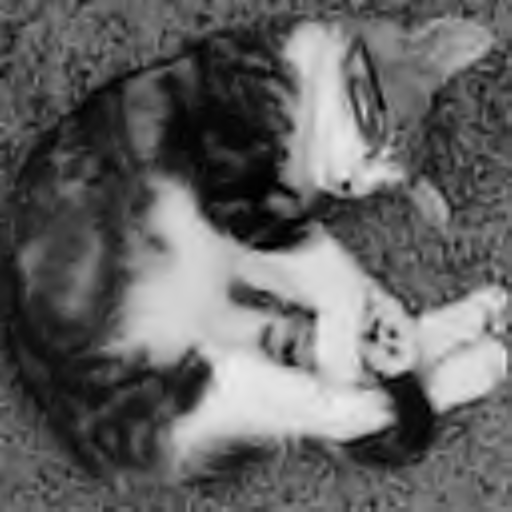}
&
\includegraphics[width=\widthimagefive]{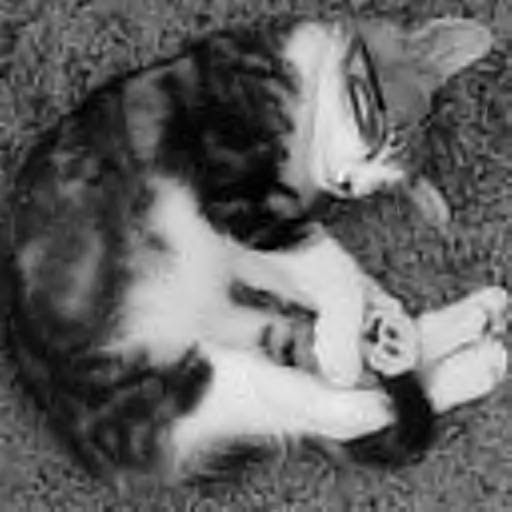}
&
\includegraphics[width=\widthimagefive]{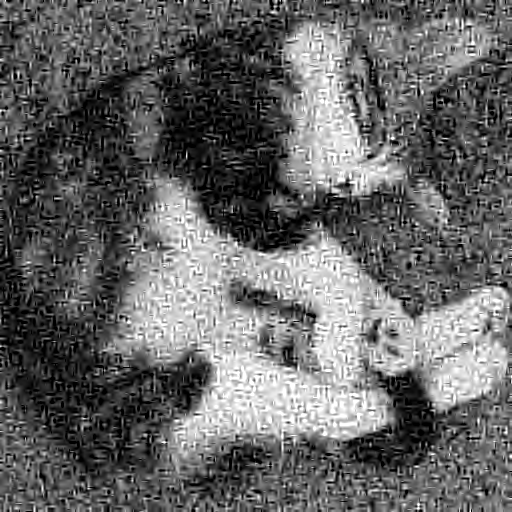}
&
\includegraphics[width=\widthimagefive]{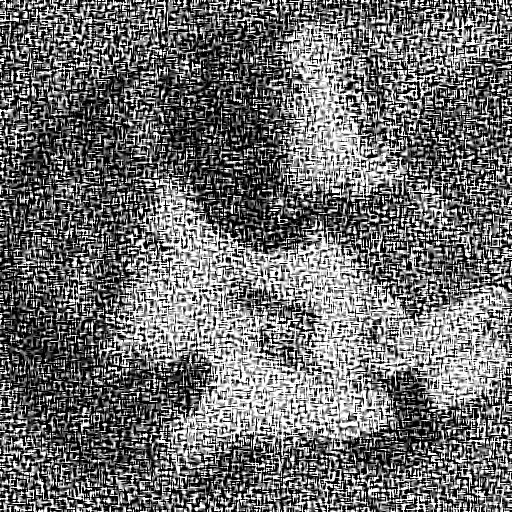} \\
\includegraphics[width=\widthimagefive]{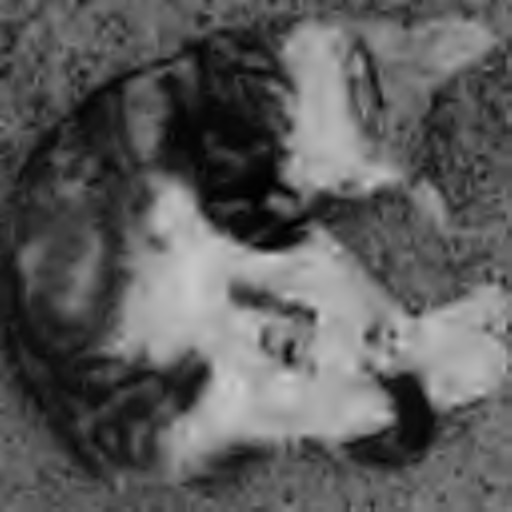}
&
\includegraphics[width=\widthimagefive]{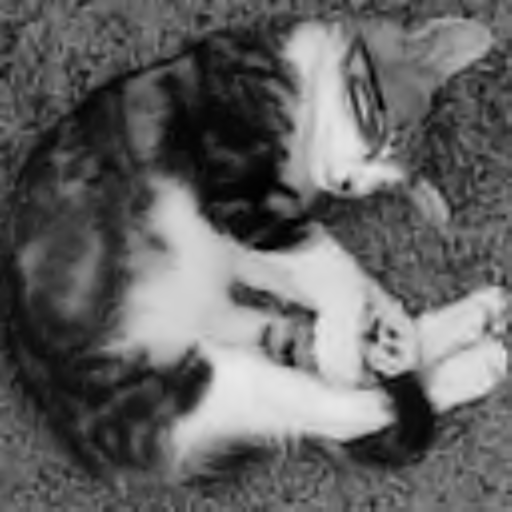}
&
\includegraphics[width=\widthimagefive]{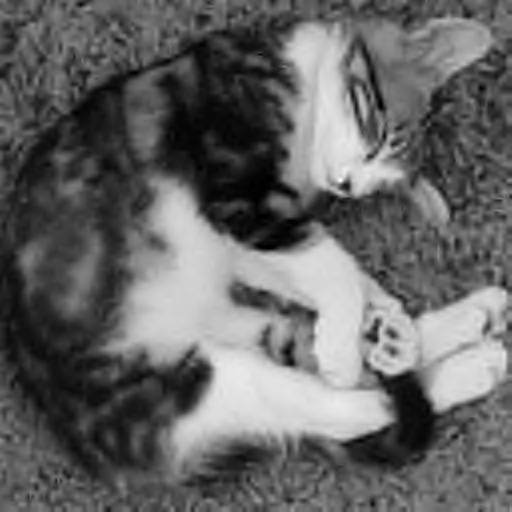}
&
\includegraphics[width=\widthimagefive]{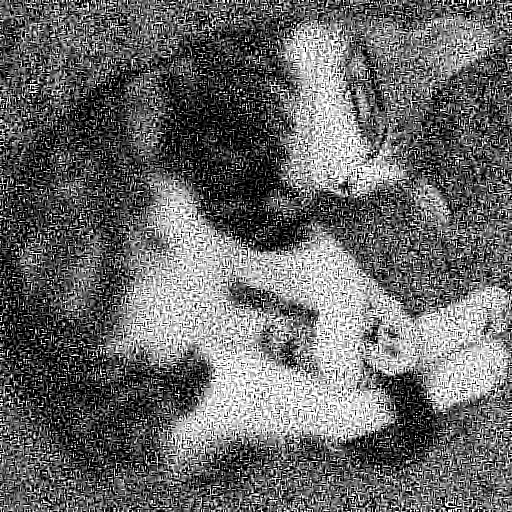}
&
\includegraphics[width=\widthimagefive]{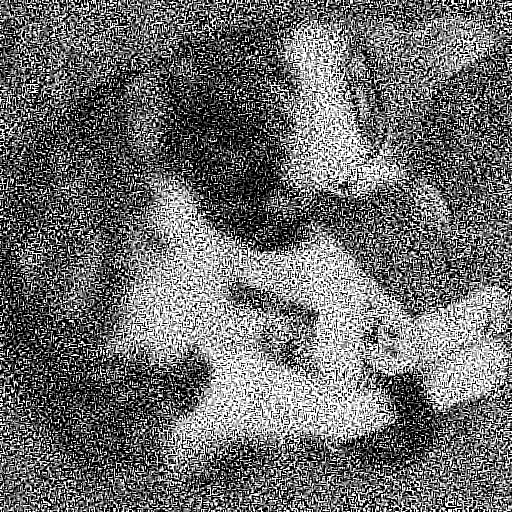} 
\end{array}
\]
\caption{\small First raw: Some iterates of vanilla 3D, with $(n;\lambda_n)=\{(350;0{,}89),(400;0{,}63),(500;0{,}32),(750;0{,}06),(900;0{,}02)\}$.
Second raw: Some iterates of warm 3D, with $(n;\lambda_n)=\{ (377;0{,}92),(535;0{,}57),(704;0{,}28),(2045;0{,}02),(2843;0{,}01)\}$.
}
\label{F:simple example iterates}
\end{figure*}

It can be seen that in the first iterations, the iterates go from an over-smoothed image towards an approximation of the original image, and then becomes contaminated by the noise.
This confirms our theoretical findings by showing that, in presence of noise, the number of iterations plays the role of a regularization parameter.
An \textit{early stopping} of the iterations leads then to better reconstruction results than the limit point.
By a simple visual inspection in Figure \ref{F:simple example iterates}, we would decide to stop the algorithms at the iterates corresponding to the middle column.
This transition between over-smoothing and noise contamination can also be measured, if one has access to the true image $\bar x$.
We can then measure what will be thereafter called the {Ground Truth Gap}: $GTG(x)=(1/d)\Vert x - \bar x\Vert,$
where $d$ is the number of pixels in $\bar x$ (see e.g. Figure \ref{F:simple example error gtg}).

\begin{center}

      \begin{minipage}{0.48\linewidth}
          \begin{figure}[H]
              \includegraphics[width=\linewidth]{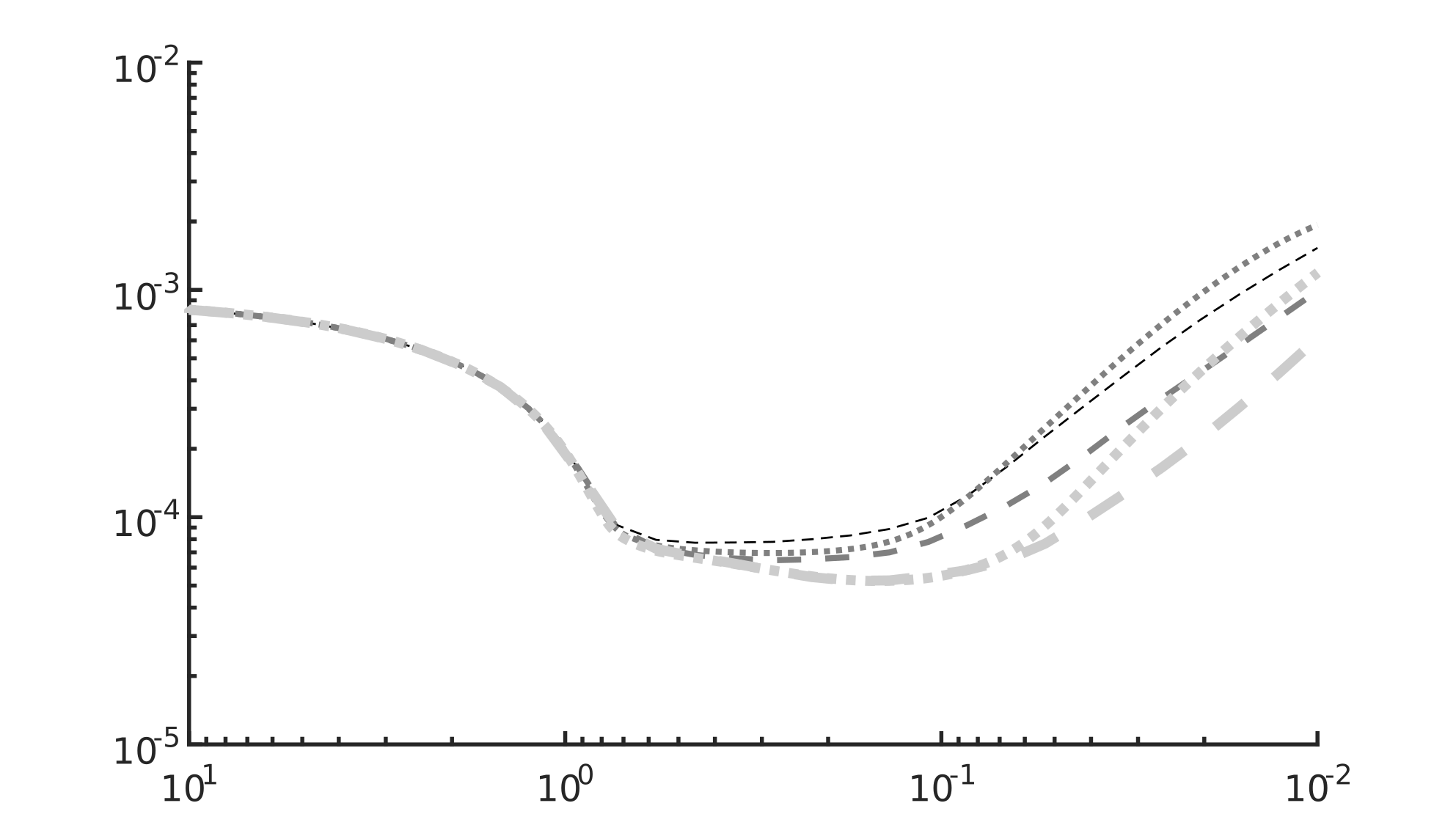}
              \caption{\small Plot of $GTG(x_n)$ with respect to $\lambda_n$, for various parameters. Dashed lines: warm 3D with $N_{wr}=30$ (from thin dark gray to thick light gray: $\eps_{wr}=\{10^{-4},10^{-5},10^{-6}\}$). Dotted lines: vanilla 3D (from thin dark gray to thick light gray: $N_v = \{10^3,10^4\}$).
}
			 \label{F:simple example error gtg}
          \end{figure}
      \end{minipage}
      \hspace{0.01\linewidth}
      \begin{minipage}{0.48\linewidth}
          \begin{figure}[H]
              \includegraphics[width=\linewidth]{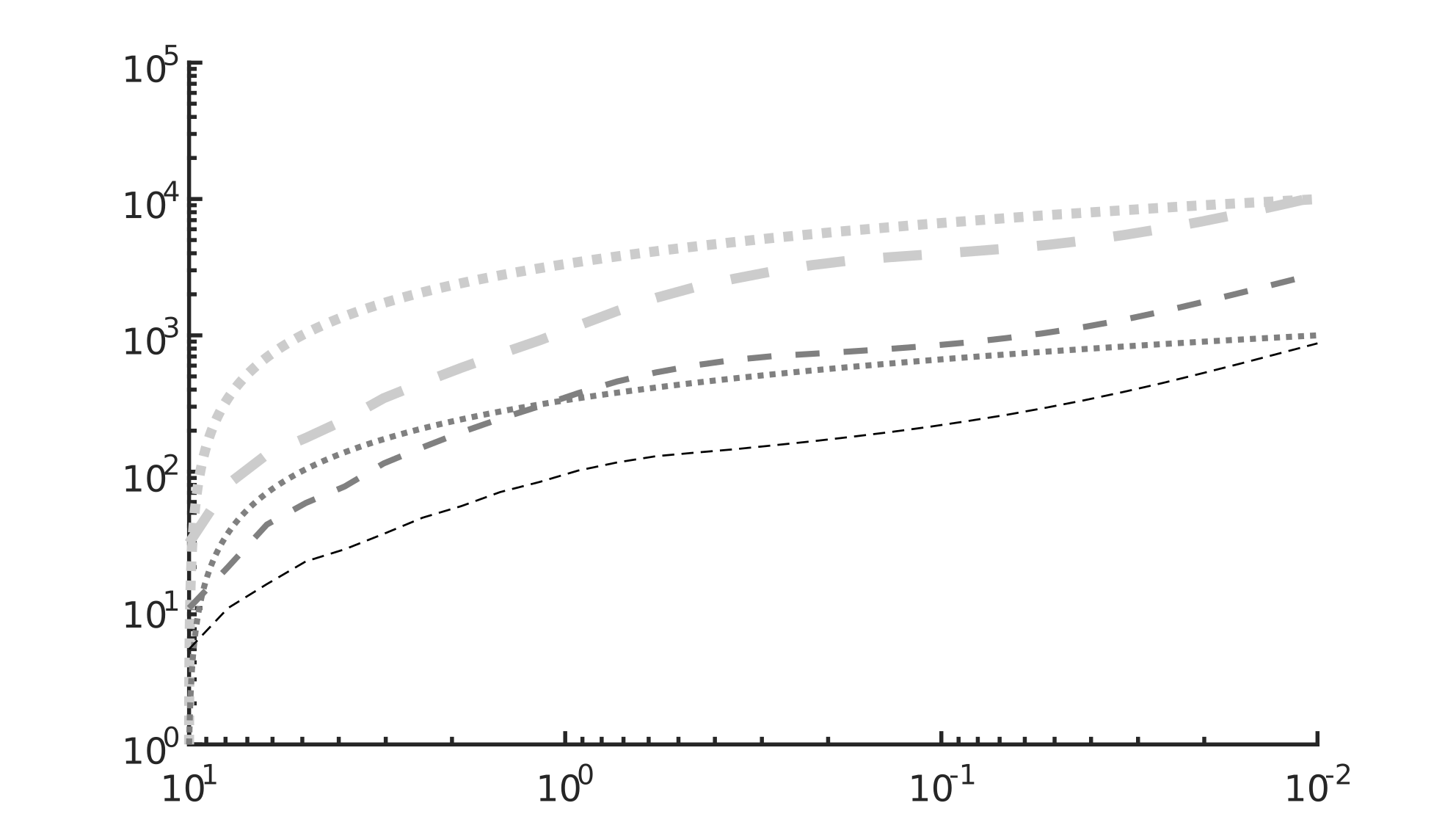}
              \caption{\small Cumulated number of iterations with respect to $\lambda_n$, for different parameters. Dashed lines: warm 3D with $N_{wr}=30$ (from thin dark gray to thick light gray: $\eps_{wr}=\{10^{-4},10^{-5},10^{-6}\}$). Dotted lines: vanilla 3D (from thin dark gray to thick light gray: $N_v = \{10^3,10^4\}$).
}
			\label{F:simple example nb iteration}
          \end{figure}
      \end{minipage}

\end{center}

We first observe that warm 3D and vanilla 3D  provide comparable reconstructions, but have a different complexity.
For vanilla 3D, the number $N_v$ controls directly the complexity and the accuracy of the method: the larger it is, the slower is the 
decay of $\lambda_n$, and the more parameters $\lambda$ are ``visited'' by the algorithm, improving the quality of the reconstructed image. 
For warm 3D, $N_{wr}$ plays a similar role, but here also the stopping rule parameter $\eps_{wr}$ has a strong indirect impact: the smaller it is, the 
slower is the decay of $\lambda_n$ because  more time is spent by the algorithm on each $\lambda$.
Also, the fact that problems $(P_\lambda)$ with a small $\lambda$ are harder to solve, heavily influence the number of iterations.
This trade-off between iteration complexity and reconstruction accuracy is illustrated in Figures \ref{F:simple example error gtg} and  \ref{F:simple example nb iteration}.
We observe  in the plots  the behavior predicted by Theorem \ref{T:stability 3-D}: the slower is the decay of $\lambda_n$, the better can be the solution, but also the larger is the number of iterations needed to reach this reconstruction.
To some extent, the parameters $N_v$ and $(N_{wr},\eps_{wr})$ play an analogue role to  $\beta$ in Theorem \ref{T:stability 3-D}.
One can also see that vanilla 3D and warm 3D can behave similarly: for the parameters $N_v=10^4$ and $(N_{wr},\eps_{wr})=(30,10^{-6})$, both methods reach a similar 
minimum value for the GTG, while requiring the same total amount of iterations.

We  note that  these two approaches outperform, in terms of computational time, the ``classic" Tikhonov regularization method.
To illustrate this, we compare in Figure \ref{F:simple example nb iteration classic} the complexity of classic Tikhonov and warm 3D methods, while the parameter $\eps_{wr}$ is fixed.
While achieving the same accuracy, classic Tikhonov requires between $2$ to $4$ times more iterations.

\begin{center}
\begin{minipage}{0.5\linewidth}
\begin{figure}[H]
\includegraphics[width=\linewidth]{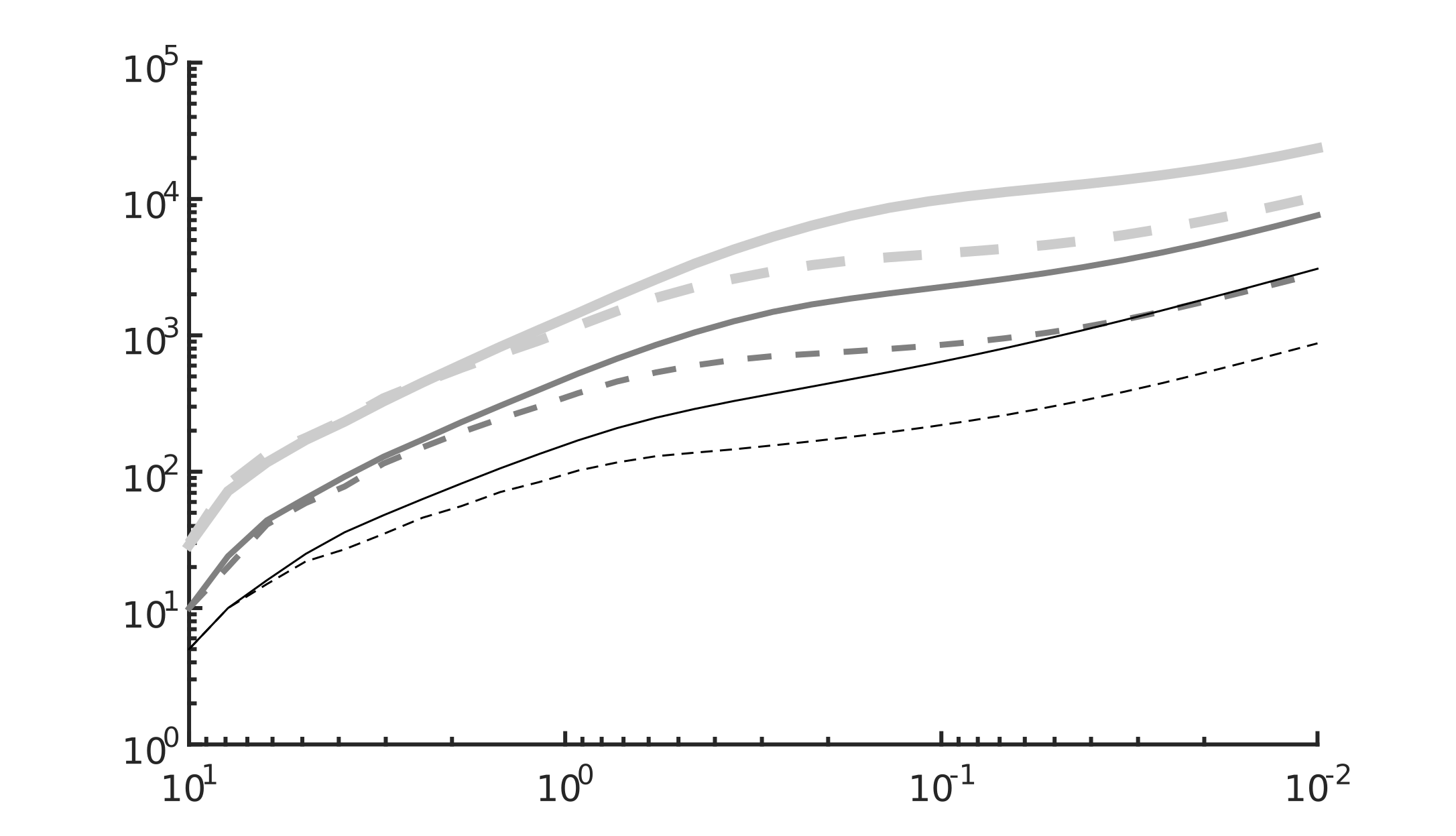}
\caption{\small Cumulated number of iterations with respect to $\lambda_n$, for different parameters.
Dashed lines: warm 3D with $N_{wr}=30$ (from thin dark gray to thick light gray: $\eps_{wr}=\{10^{-4},10^{-5},10^{-6}\}$).
Solid lines: classic Tikhonov (from thin dark gray to thick light gray: $\eps_{wr}=\{10^{-4},10^{-5},10^{-6}\}$).
}
\label{F:simple example nb iteration classic}
\end{figure}
\end{minipage}
\end{center}

\end{example}

\subsection{Parameter selection}
\label{SS:expreiments SURE discussion}

In this section, we discuss the problem of the regularization parameter selection.
We note again that iterative regularization provide a different way to explore different regularization level, and not a way to choose the right level. 
For the (3-D) method, the number of iterations $n$ is the regularization parameter, as shown in Proposition~\ref{T:stability 3-D}.
The problem of choosing the right regularization level-- i.e. the  right regularization parameter-- is of paramount important and still one of the biggest challenges in inverse problems.
For illustration purposes, in previous numerical experiments, we used the original image $\bar x$ to find the iterate $\bar n$ for which the ground truth gap $GTG(x_n)$ was minimized. This parameter's choice is clearly unrealistic in practical situations, where we only have access to a noisy data $\hat y$. Many automatic parameters choice are known, e.g. Morozov's discrepancy principle \cite{EngHanNeu96}, or SURE \cite{Ste81}). 
Next,  we comment on how they can be adapted to (3-D) and  in particular, consider the SURE parameter selection method, that we briefly present  below.

\medskip

Remember from \eqref{E:DiagonalAlgoGeneralForm} that our algorithm can be written as $ x_{n+1}:=\text{Algorithm}(x_{n};\lambda_n;\hat y)$. 
By recurrence, we can then express each iterate $x_n$ as a function of the starting point and the data
\[
x_n = \text{Algorithm}( .. \ \text{Algorithm}(x_0;\lambda_0;\hat y);.. \  ; \lambda_n;\hat y),
\]
or, more compactly, $x_n = \mathcal{A}_n(x_0;\hat y)$.
The SURE is an unbiased estimator for the \textit{Mean Squared Error}, 
$$MSE(x)=(1/d)\Vert A(x_n - \bar x) \Vert^2,$$ provided we have access to the noisy data $\hat y$ and the variance of the noise $\sigma^2$.
This estimator is defined by:
\begin{equation*}
SURE(x_n):=\frac{1}{d}\Vert Ax_n - \hat y \Vert^2 
+ \frac{2\sigma^2}{d} \langle A D_n, \xi \rangle,
\end{equation*}
where $\xi \sim \mathcal{N}(0,Id_{\R^d})$ and $D_n = \partial_{\hat y} \mathcal{A}_n(x_0; \cdot ) [\xi]$ is the weak directional derivative of $\mathcal{A}_n(x_0;\cdot)$ at $\hat y$ in the direction $\xi$.
For more details on this method, and how to compute it in practice, the reader might consult \cite[Section 4]{DelVaiFad14}).

The SURE estimator is depicted in Figure \ref{F:simple example sugar}, in the setting of Example \ref{Ex:simple example}.
One can see, and this behavior was
observed in all experiments, that the curve of $SURE(x_n)$ oscillates and is not convex.
This can be problematic when looking for a global minimum: the oscillations, together with the fact that the global minimum of $SURE$ often presents a sharp shape, do not allow to find a robust minimizer.
To circumvent these artifacts, we applied the following heuristic, which proved to be efficient in our experiments: smoothing the curve of $SURE(x_n)$, and defining the early stopping iterate $\hat n$ as the one minimizing the slope of this smoothed version of $SURE(x_n)$.
\begin{center}
\begin{minipage}{0.75\linewidth}
\begin{figure}[H]
\includegraphics[width=\linewidth]{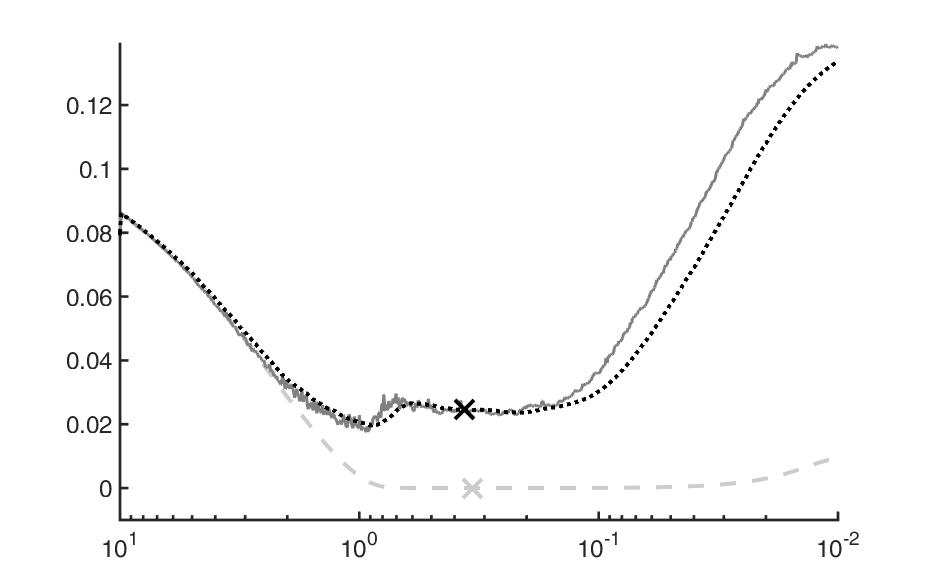}
\caption{\small 
Plot of various estimators with respect to $\lambda_n$.
Light gray dashed line: true MSE.
Light gray cross: minimum of the true MSE.
Gray plain line: SURE.
Black dotted line: smoothed SURE.
Black cross: minimum slope of the smoothed SURE.
For display purposes, SURE and its smoothed version are here corrected by an additive constant.
}
\label{F:simple example sugar}
\end{figure}
\end{minipage}
\end{center}

Note that the statistical properties of the SURE estimator rely on the assumption that the noise is Gaussian \cite{Ste81,DelVaiFad14}.
Nevertheless, as we will see below, it also provides surprisingly good results for the impulse noise, while being less efficient for the Poisson noise, or the mixed Gaussian-impulse noise.

\vspace{-0.6cm}

\subsection{Experiments for various noises and models}\label{SS:experiments dataset with true image}

In this section, we  run and compare vanilla 3D and warm 3D on a data-set, considering different noises and models for recovering the images.
This data-set, which is available online\footnote{\url{www.guillaume-garrigos.com/database/image_processing_512.zip}}, is made of 23 images, whose size range from $500 \times 375$ to $515 \times 512$ pixels.
For each experiment, the range of parameters $(\lambda_{max},\lambda_{min})$ will be chosen accordingly to the nature of the noise and its variance.
To fairly compare vanilla 3D and warm 3D, we will choose for each example  the parameters $(N_v,N_{wr},\eps_{wr})$ in such a way that the  number of iterations for both methods is of the same order ($\sim 10^3$).
For each experiment, the early stopping will be defined according to two different rules: keeping the notations of Section \ref{SS:expreiments SURE discussion}, $\bar n$ will denote the iterate minimizing the ideal ground truth gap $GTG(x_n)$, while $\hat n$ will be the iterate defined by means of the SURE estimator.

\begin{example}
\label{Ex:saltandpepper L1L1}
Each image of the data-set is blurred and corrupted by a salt and pepper noise of intensity $35\%$, and is reconstructed by using an $L^1$ data-fit function $D(u,y)=\Vert u-y \Vert_1$, and a regularizer enforcing  sparsity in a wavelet dictionary: 
\[
(\forall x\in X)\quad R(x) = \Vert Wx \Vert_1 + \frac{1}{2} \Vert x \Vert^2,
\]
where here $W$ is a Daubechies wavelet transform.
We run the (3-D) algorithm for $(\lambda_{max},\lambda_{min})=(10,10^{-1})$, and take $N_v=1000$ and $N_{wr}=20$, $\eps_{wr}=10^{-5}$ for vanilla 3D and warm 3D, respectively.
The results are summarized in Table~\ref{F:SPL1L1 numbers} and Figure~\ref{F:SPL1L1 pictures}.

\begin{center}
\begin{minipage}{0.6\linewidth}
\begin{table}[H]
\begin{tabular}{|c|c|c|}
\hline 
 & vanilla 3D & warm 3D \\ 
\hline 
Iterations & $1000$  & $886 \pm 160$ \\ 
\hline 
$GTG(x_{\bar n})$  & $1{,}14. 10^{-4} \pm 4{,}5 . 10^{-5}$ & $1{,}11. 10^{-4} \pm 4{,}3 . 10^{-5}$ \\ 
\hline 
$GTG(x_{\hat n})$  & $1{,}37. 10^{-4} \pm 5{,}8 . 10^{-5}$ & $1{,}40. 10^{-4} \pm 6{,}0 . 10^{-5}$ \\ 
\hline 
\end{tabular} 
\caption{\small Results of the experiments for Example \ref{Ex:saltandpepper L1L1}.
}
\label{F:SPL1L1 numbers}
\end{table}
\end{minipage}
\end{center}

\begin{center}
\begin{minipage}{0.6\linewidth}
\begin{figure}[H]
\centering
\[
\begin{array}{ccccc}
\includegraphics[width=\widthimagefive]{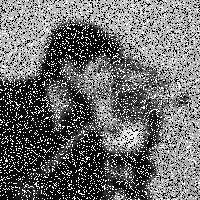}
&
\includegraphics[width=\widthimagefive]{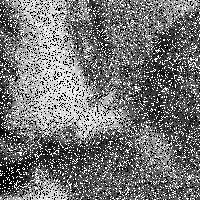}
&
\includegraphics[width=\widthimagefive]{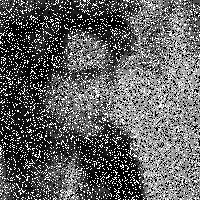}
&
\includegraphics[width=\widthimagefive]{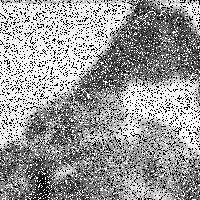}
&
\includegraphics[width=\widthimagefive]{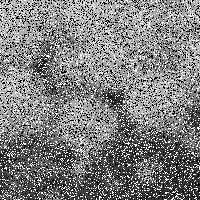} 
 \\
\includegraphics[width=\widthimagefive]{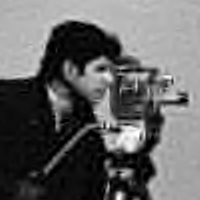}
&
\includegraphics[width=\widthimagefive]{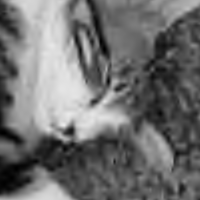}
&
\includegraphics[width=\widthimagefive]{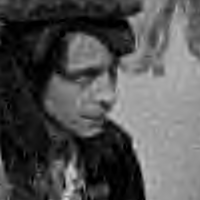}
&
\includegraphics[width=\widthimagefive]{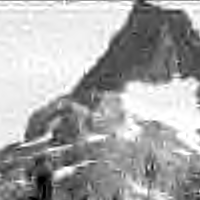}
&
\includegraphics[width=\widthimagefive]{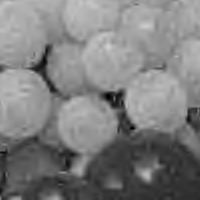} 
\\
\includegraphics[width=\widthimagefive]{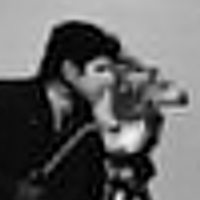}
&
\includegraphics[width=\widthimagefive]{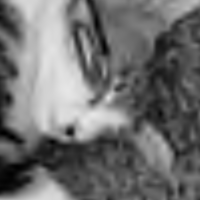}
&
\includegraphics[width=\widthimagefive]{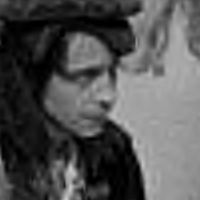}
&
\includegraphics[width=\widthimagefive]{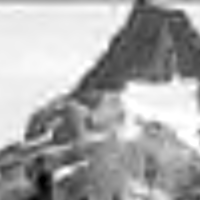}
&
\includegraphics[width=\widthimagefive]{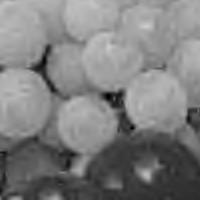} 
\\
\includegraphics[width=\widthimagefive]{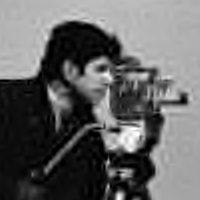}
&
\includegraphics[width=\widthimagefive]{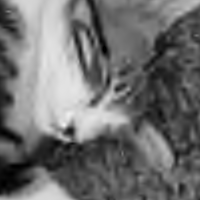}
&
\includegraphics[width=\widthimagefive]{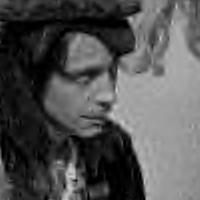}
&
\includegraphics[width=\widthimagefive]{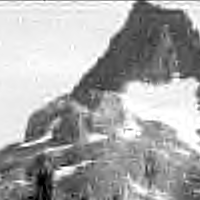}
&
\includegraphics[width=\widthimagefive]{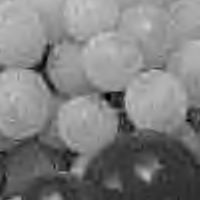} 
 \\
\includegraphics[width=\widthimagefive]{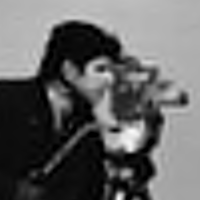}
&
\includegraphics[width=\widthimagefive]{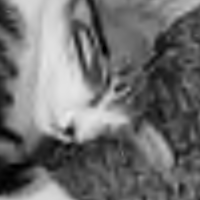}
&
\includegraphics[width=\widthimagefive]{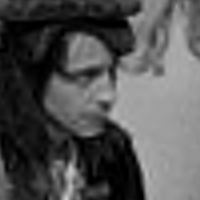}
&
\includegraphics[width=\widthimagefive]{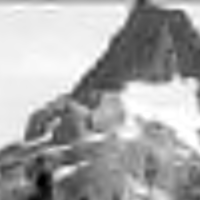}
&
\includegraphics[width=\widthimagefive]{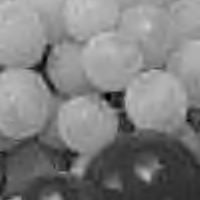} 
\end{array} 
\]
\caption{\small Samples from Example \ref{Ex:saltandpepper L1L1}.
From top to bottom: noisy image, reconstruction with vanilla 3D having access to the GTG (i.e.  $x_{\bar n}$), reconstruction with vanilla 3D using SURE (i.e.  $x_{\hat n}$), and reconstruction with warm 3D ($x_{\bar n}$ then $x_{\hat n}$).
}
\label{F:SPL1L1 pictures}
\end{figure}
\end{minipage}
\begin{minipage}{0.1\linewidth}
\hfill
\end{minipage}
\end{center}
\end{example}

\begin{example}
\label{Ex:SPL1TV}
Each image of the data-set is blurred and corrupted by a salt and pepper noise of intensity $35\%$, and is reconstructed by using an $L^1$ data-fit function $D(u,y)=\Vert u-y \Vert_1$, and a regularizer based on the total variation: 
\[
(\forall x\in X)\quad R(x) = \frac{1}{10}\Vert x \Vert_{TV} + \frac{1}{2} \Vert x \Vert^2.
\]
We run the (3-D) algorithm by taking $(\lambda_{max},\lambda_{min})=(10,10^{-1})$, with $N_v=1000$ and $N_{wr}=20$, $\eps_{wr}=10^{-5}$ for vanilla 3D and warm 3D, respectively.
The results are summarized in
Table \ref{F:SPL1TV numbers} and Figure \ref{F:SPL1TV pictures}.

\begin{center}
\begin{minipage}{0.6\linewidth}
\begin{table}[H]
\begin{tabular}{|c|c|c|}
\hline 
 & vanilla 3D & warm 3D \\ 
\hline 
Iterations & $1000$  & $618 \pm 106$ \\ 
\hline 
$GTG(x_{\bar n})$  & $1{,}02. 10^{-4} \pm 4{,}3 . 10^{-5}$ & $1{,}04. 10^{-4} \pm 4{,}0 . 10^{-5}$ \\ 
\hline 
$GTG(x_{\hat n})$  & $1{,}05. 10^{-4} \pm 4{,}4 . 10^{-5}$ & $1{,}13. 10^{-4} \pm 4{,}1 . 10^{-5}$ \\ 
\hline 
\end{tabular} 
\caption{Results of the experiments for Example \ref{Ex:SPL1TV}.
}
\label{F:SPL1TV numbers}
\end{table}
\end{minipage}
\end{center}

\begin{center}
\begin{minipage}{0.6\linewidth}
\begin{figure}[H]
\centering
\[
\begin{array}{ccccc}
\includegraphics[width=\widthimagefive]{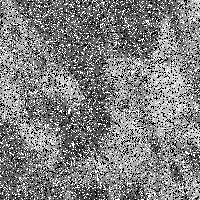}
&
\includegraphics[width=\widthimagefive]{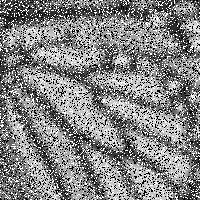}
&
\includegraphics[width=\widthimagefive]{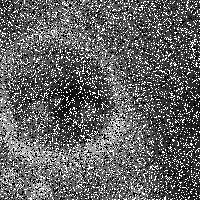}
&
\includegraphics[width=\widthimagefive]{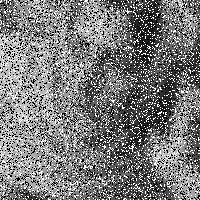} 
&
\includegraphics[width=\widthimagefive]{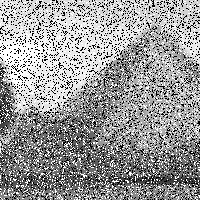} \\
\includegraphics[width=\widthimagefive]{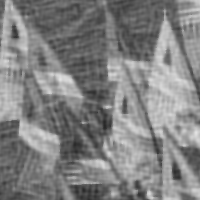}
&
\includegraphics[width=\widthimagefive]{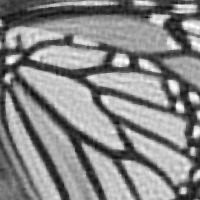}
&
\includegraphics[width=\widthimagefive]{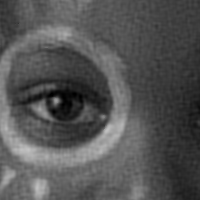}
&
\includegraphics[width=\widthimagefive]{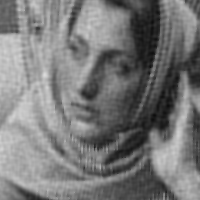} 
&
\includegraphics[width=\widthimagefive]{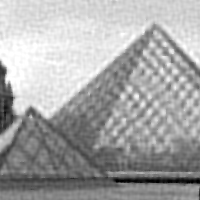}
\\
\includegraphics[width=\widthimagefive]{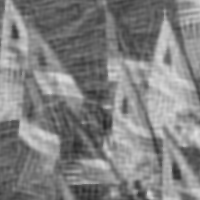}
&
\includegraphics[width=\widthimagefive]{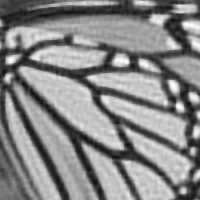}
&
\includegraphics[width=\widthimagefive]{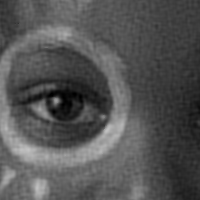}
&
\includegraphics[width=\widthimagefive]{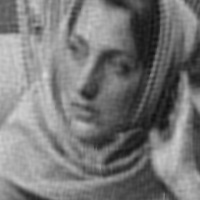} 
&
\includegraphics[width=\widthimagefive]{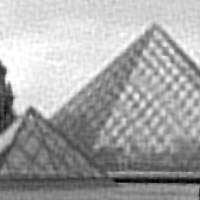}
\\
\includegraphics[width=\widthimagefive]{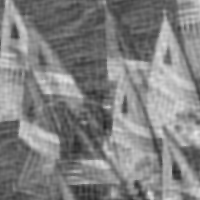}
&
\includegraphics[width=\widthimagefive]{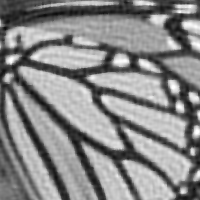}
&
\includegraphics[width=\widthimagefive]{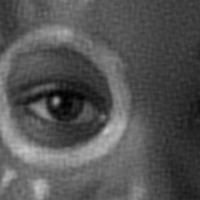}
&
\includegraphics[width=\widthimagefive]{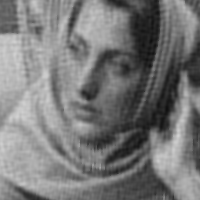} 
&
\includegraphics[width=\widthimagefive]{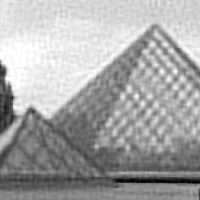}
\\
\includegraphics[width=\widthimagefive]{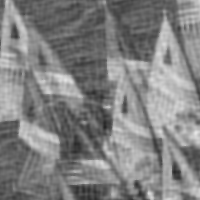}
&
\includegraphics[width=\widthimagefive]{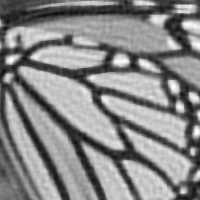}
&
\includegraphics[width=\widthimagefive]{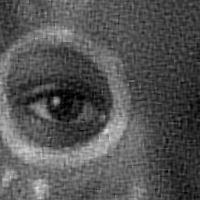}
&
\includegraphics[width=\widthimagefive]{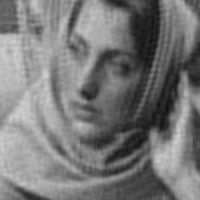} 
&
\includegraphics[width=\widthimagefive]{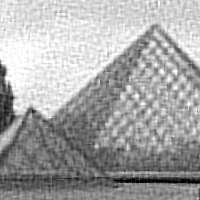}
\end{array} 
\]
\caption{Samples from Example \ref{Ex:SPL1TV}.
From top to bottom: noisy image, reconstruction with vanilla 3D having access to the GTG (i.e.  $x_{\bar n}$), reconstruction with vanilla 3D using SURE (i.e.  $x_{\hat n}$), and reconstruction with warm 3D ($x_{\bar n}$ then $x_{\hat n}$).
}
\label{F:SPL1TV pictures}
\end{figure}
\end{minipage}
\begin{minipage}{0.1\linewidth}
\hfill
\end{minipage}
\end{center}
\end{example}

\begin{example}
\label{Ex:GL2TV}
Each image of the data-set is blurred and corrupted by a Gaussian noise of {variance} $10^{-2}$, and is reconstructed by using an $L^2$ data-fit function $D(u,y)=(1/2)\Vert u-y \Vert^2$, and a regularizer based on the total variation: 
\[
(\forall x\in X)\quad R(x) = \Vert x \Vert_{TV} + \frac{1}{2} \Vert x \Vert^2.
\]
We run the (3-D) algorithm by taking $(\lambda_{max},\lambda_{min})=(1,10^{-2})$, with $N_v=1000$ and $N_{wr}=20$, $\eps_{wr}=10^{-4}$ for vanilla 3D and warm 3D, respectively.
The results are summarized in Table~\ref{F:GL2TV numbers} and Figure~\ref{F:GL2TV pictures}.

\begin{center}
\begin{minipage}{0.6\linewidth}
\begin{table}[H]
\begin{tabular}{|c|c|c|}
\hline 
 & vanilla 3D & warm 3D \\ 
\hline 
Iterations & $1000$  & $1096 \pm 50$ \\ 
\hline 
$GTG(x_{\bar n})$  & $1{,}41  . 10^{-4} \pm 4{,}4 . 10^{-5}$ & $1{,}42. 10^{-4} \pm 4{,}4 . 10^{-5}$ \\ 
\hline 
$GTG(x_{\hat n})$  & $1{,}48. 10^{-4} \pm 4{,}1 . 10^{-5}$ & $1{,}56. 10^{-4} \pm 3{,}9 . 10^{-5}$ \\ 
\hline 
\end{tabular} 
\caption{Results of the experiments for Example \ref{Ex:GL2TV}.
}
\label{F:GL2TV numbers}
\end{table}
\end{minipage}
\end{center}

\begin{center}
\begin{minipage}{0.6\linewidth}
\begin{figure}[H]
\centering
\[
\begin{array}{ccccc}
\includegraphics[width=\widthimagefive]{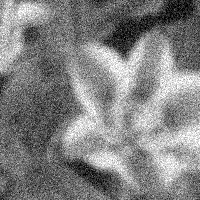}
&
\includegraphics[width=\widthimagefive]{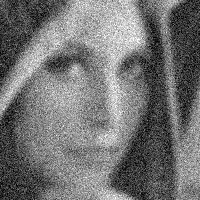}
&
\includegraphics[width=\widthimagefive]{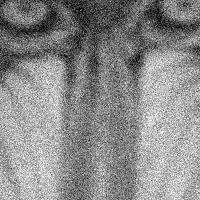}
&
\includegraphics[width=\widthimagefive]{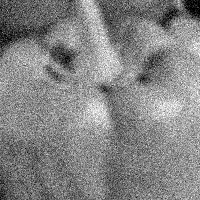}
&
\includegraphics[width=\widthimagefive]{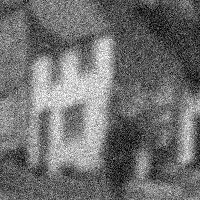}  
\\
\includegraphics[width=\widthimagefive]{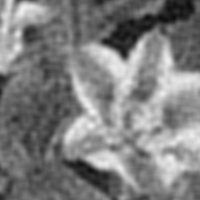}
&
\includegraphics[width=\widthimagefive]{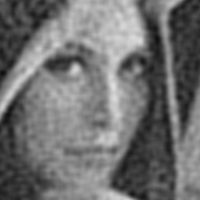}
&
\includegraphics[width=\widthimagefive]{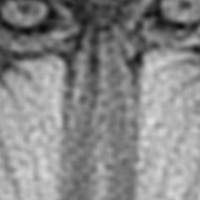}
&
\includegraphics[width=\widthimagefive]{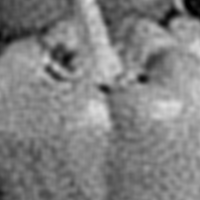}
&
\includegraphics[width=\widthimagefive]{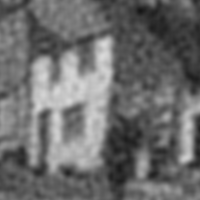}  
\\
\includegraphics[width=\widthimagefive]{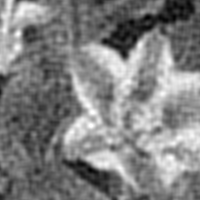}
&
\includegraphics[width=\widthimagefive]{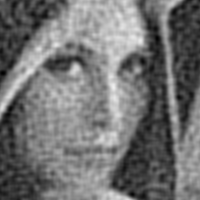}
&
\includegraphics[width=\widthimagefive]{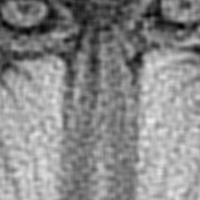}
&
\includegraphics[width=\widthimagefive]{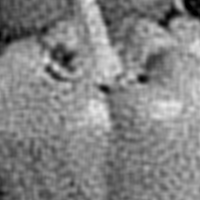}
&
\includegraphics[width=\widthimagefive]{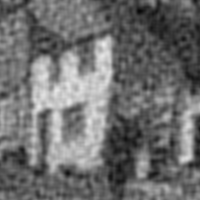}  
\\
\includegraphics[width=\widthimagefive]{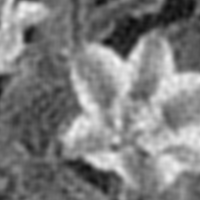}
&
\includegraphics[width=\widthimagefive]{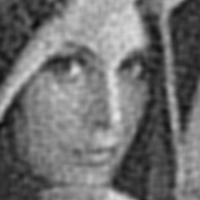}
&
\includegraphics[width=\widthimagefive]{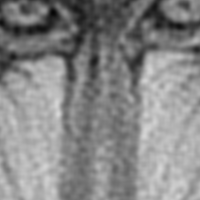}
&
\includegraphics[width=\widthimagefive]{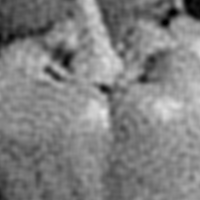}
&
\includegraphics[width=\widthimagefive]{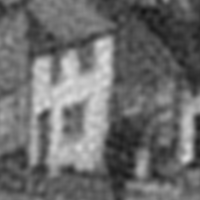} 
\\
\includegraphics[width=\widthimagefive]{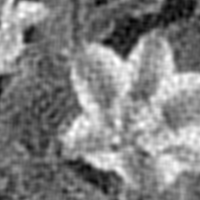}
&
\includegraphics[width=\widthimagefive]{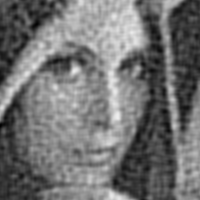}
&
\includegraphics[width=\widthimagefive]{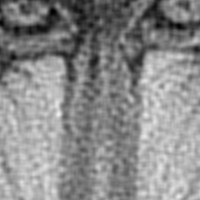}
&
\includegraphics[width=\widthimagefive]{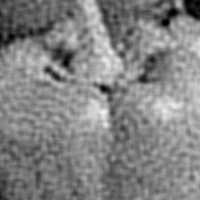}
&
\includegraphics[width=\widthimagefive]{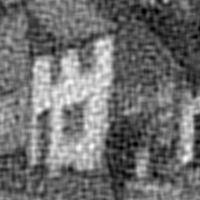} 
\end{array} 
\]
\caption{Samples from Example \ref{Ex:GL2TV}.
From top to bottom: noisy image, reconstruction with vanilla 3D having access to the GTG (i.e.  $x_{\bar n}$), reconstruction with vanilla 3D using SURE (i.e.  $x_{\hat n}$), and reconstruction with warm 3D ($x_{\bar n}$ then $x_{\hat n}$).
}
\label{F:GL2TV pictures}
\end{figure}
\end{minipage}
\begin{minipage}{0.1\linewidth}
\hfill
\end{minipage}
\end{center}
\end{example}

\begin{example}
\label{Ex:GSPL1L2TV}
Each image of the data-set is blurred and corrupted by a combination of a Gaussian noise of variance $5.10^{-3}$ and a salt and pepper noise of intensity $5\%$, and 
is reconstructed using an Huber data-fit function $D(u,y)=H_{\sigma}(x-y)$ with $\sigma=0.1$, and a regularizer based on the total variation: 
\[
(\forall x\in X)\quad R(x) = \Vert x \Vert_{TV} + \frac{1}{2} \Vert x \Vert^2.
\]
We run the (3-D) algorithm by taking $(\lambda_{max},\lambda_{min})=(10^{-1},10^{-3})$, with $N_v=1000$ and $N_{wr}=20$, $\eps_{wr}=10^{-4}$ for vanilla 3D and warm 3D, respectively.
The results are summarized in Table~\ref{F:GSPL1L2TV numbers} and Figure~\ref{F:GSPL1L2TV pictures}.

\begin{center}
\begin{minipage}{0.6\linewidth}
\begin{table}[H]
\begin{tabular}{|c|c|c|}
\hline 
 & vanilla 3D & warm 3D \\ 
\hline 
Iterations & $1000$  & $3760 \pm 131$ \\ 
\hline 
$GTG(x_{\bar n})$  & $1{,}56  . 10^{-4} \pm 4{,}3 . 10^{-5}$ & $1{,}58. 10^{-4} \pm 4{,}3 . 10^{-5}$ \\ 
\hline 
$GTG(x_{\hat n})$  & $2{,}16  . 10^{-4} \pm 6{,}8 . 10^{-5}$ & $1{,}99. 10^{-4} \pm 8{,}2 . 10^{-5}$ \\ 
\hline 
\end{tabular} 
\caption{Results of the experiments for Example \ref{Ex:GSPL1L2TV}.
}
\label{F:GSPL1L2TV numbers}
\end{table}
\end{minipage}
\end{center}

\begin{center}
\begin{minipage}{0.6\linewidth}
\begin{figure}[H]
\centering
\[
\begin{array}{ccccc}
\includegraphics[width=\widthimagefive]{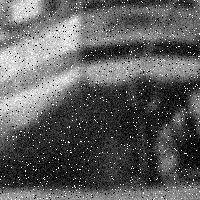}
&
\includegraphics[width=\widthimagefive]{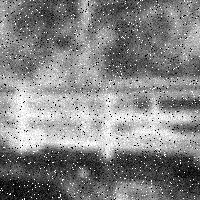}
&
\includegraphics[width=\widthimagefive]{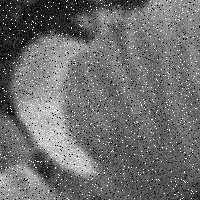}
&
\includegraphics[width=\widthimagefive]{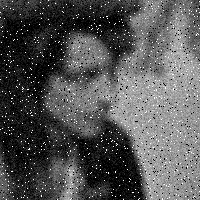} 
&
\includegraphics[width=\widthimagefive]{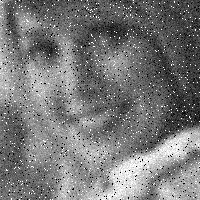} 
\\
\includegraphics[width=\widthimagefive]{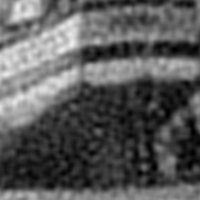}
&
\includegraphics[width=\widthimagefive]{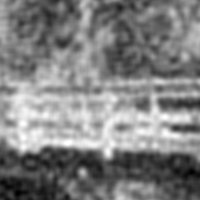}
&
\includegraphics[width=\widthimagefive]{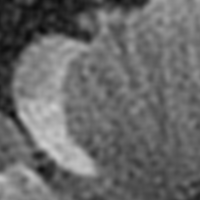}
&
\includegraphics[width=\widthimagefive]{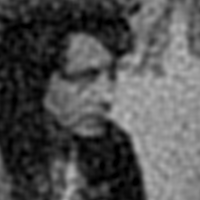} 
&
\includegraphics[width=\widthimagefive]{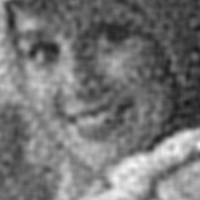}
\\
\includegraphics[width=\widthimagefive]{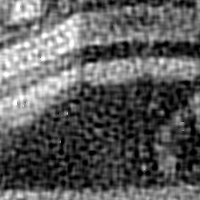}
&
\includegraphics[width=\widthimagefive]{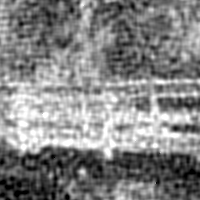}
&
\includegraphics[width=\widthimagefive]{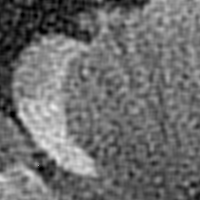}
&
\includegraphics[width=\widthimagefive]{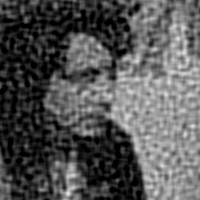} 
&
\includegraphics[width=\widthimagefive]{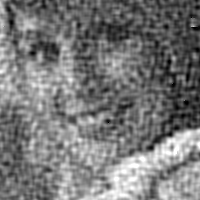}
\\
\includegraphics[width=\widthimagefive]{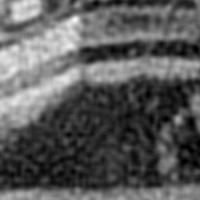}
&
\includegraphics[width=\widthimagefive]{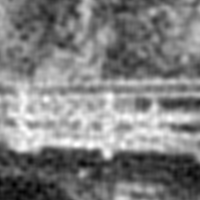}
&
\includegraphics[width=\widthimagefive]{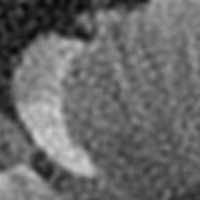}
&
\includegraphics[width=\widthimagefive]{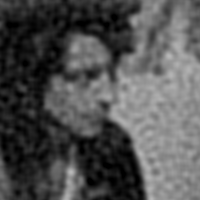} 
&
\includegraphics[width=\widthimagefive]{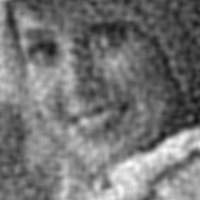}
\\
\includegraphics[width=\widthimagefive]{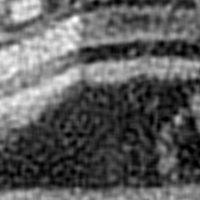}
&
\includegraphics[width=\widthimagefive]{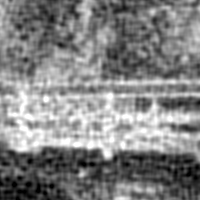}
&
\includegraphics[width=\widthimagefive]{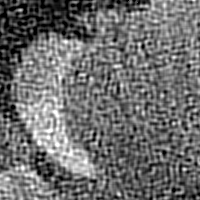}
&
\includegraphics[width=\widthimagefive]{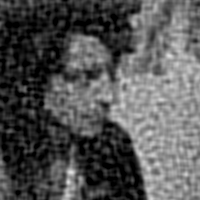} 
&
\includegraphics[width=\widthimagefive]{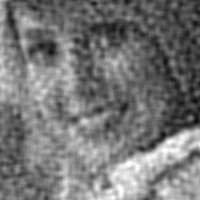}
\end{array} 
\]
\caption{Samples from example \ref{Ex:GSPL1L2TV}.
From top to bottom: noisy image, reconstruction with vanilla 3D having access to the GTG (i.e.  $x_{\bar n}$), reconstruction with vanilla 3D using SURE (i.e.  $x_{\hat n}$), and reconstruction with warm 3D ($x_{\bar n}$ then $x_{\hat n}$).
}
\label{F:GSPL1L2TV pictures}
\end{figure}
\end{minipage}
\begin{minipage}{0.1\linewidth}
\hfill
\end{minipage}
\end{center}
\end{example}

\begin{example}
\label{Ex:PKLTV}
Each image of the data-set is blurred and corrupted by a Poisson noise, and is reconstructed by using a Kullback-Leibler data-fit function $D(u,y)=\KL(y;u + b)$, where $b$ models a background noise of small intensity, and a regularizer based on the total variation: 
\[
(\forall x\in X)\quad R(x) = \frac{1}{10}\Vert x \Vert_{TV} + \frac{1}{2} \Vert x \Vert^2.
\]
We run the (3-D) algorithm by taking $(\lambda_{max},\lambda_{min})=(10^{-1},10^{-3})$, with $N_v=1000$ and $N_{wr}=20$, $\eps_{wr}=10^{-4}$ for vanilla 3D and warm 3D, respectively.
The results are summarized in Table \ref{F:PKLTV numbers} and Figure \ref{F:PKLTV pictures}.

\begin{center}
\begin{minipage}{0.6\linewidth}
\begin{table}[H]
\begin{tabular}{|c|c|c|}
\hline 
 & vanilla 3D & warm 3D \\ 
\hline 
Iterations & $1000$  & $3674 \pm 329$ \\ 
\hline 
$GTG(x_{\bar n})$  & $1{,}24  . 10^{-4} \pm 4{,}2 . 10^{-5}$ & $1{,}26. 10^{-4} \pm 4{,}2 . 10^{-5}$ \\ 
\hline 
$GTG(x_{\hat n})$  & $3{,}51  . 10^{-4} \pm 3{,}9 . 10^{-5}$ & $6{,}80. 10^{-4} \pm 1{,}93 . 10^{-4}$ \\ 
\hline 
\end{tabular} 
\caption{Results of the experiments for Example \ref{Ex:PKLTV}.
}
\label{F:PKLTV numbers}
\end{table}
\end{minipage}
\end{center}

\begin{center}
\begin{minipage}{0.6\linewidth}
\begin{figure}[H]
\centering
\[
\begin{array}{ccccc}
\includegraphics[width=\widthimagefive]{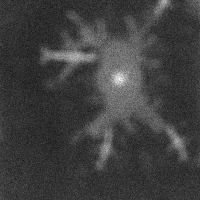}
&
\includegraphics[width=\widthimagefive]{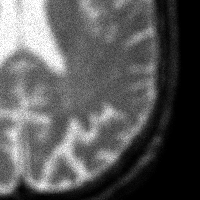}
&
\includegraphics[width=\widthimagefive]{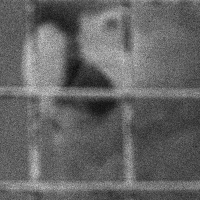}
&
\includegraphics[width=\widthimagefive]{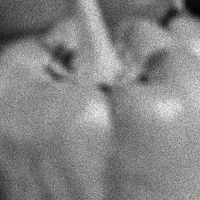} 
&
\includegraphics[width=\widthimagefive]{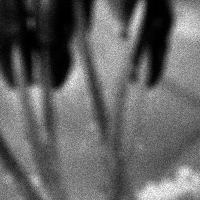} 
\\
\includegraphics[width=\widthimagefive]{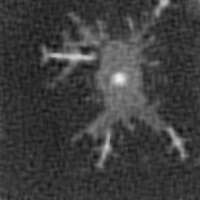}
&
\includegraphics[width=\widthimagefive]{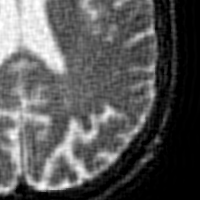}
&
\includegraphics[width=\widthimagefive]{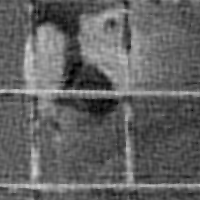}
&
\includegraphics[width=\widthimagefive]{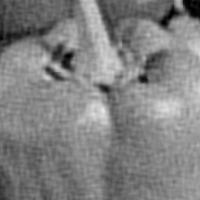} 
&
\includegraphics[width=\widthimagefive]{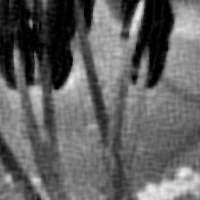}
\\
\includegraphics[width=\widthimagefive]{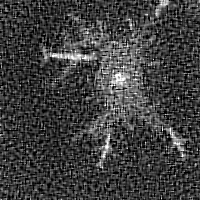}
&
\includegraphics[width=\widthimagefive]{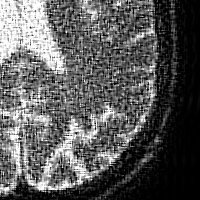}
&
\includegraphics[width=\widthimagefive]{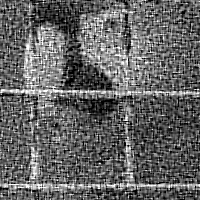}
&
\includegraphics[width=\widthimagefive]{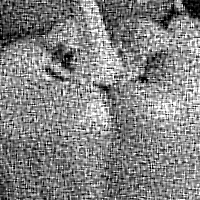} 
&
\includegraphics[width=\widthimagefive]{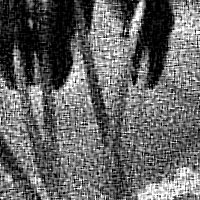}
\\
\includegraphics[width=\widthimagefive]{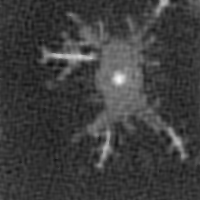}
&
\includegraphics[width=\widthimagefive]{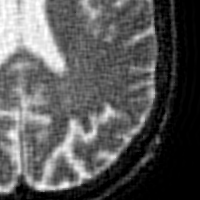}
&
\includegraphics[width=\widthimagefive]{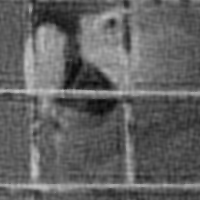}
&
\includegraphics[width=\widthimagefive]{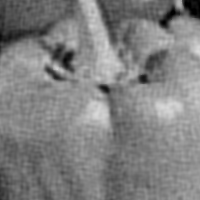} 
&
\includegraphics[width=\widthimagefive]{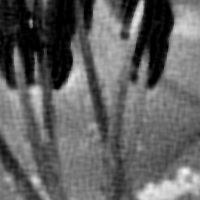}
\\
\includegraphics[width=\widthimagefive]{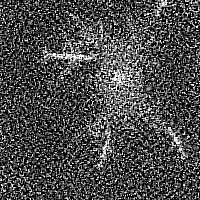}
&
\includegraphics[width=\widthimagefive]{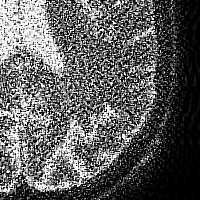}
&
\includegraphics[width=\widthimagefive]{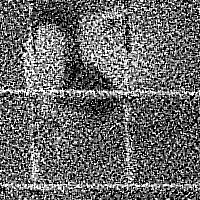}
&
\includegraphics[width=\widthimagefive]{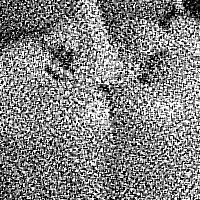} 
&
\includegraphics[width=\widthimagefive]{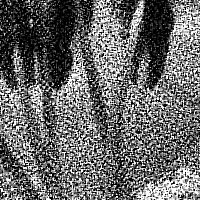}
\end{array} 
\]
\caption{Samples from Example \ref{Ex:PKLTV}.
From top to bottom: noisy image, reconstruction with vanilla 3D having access to the GTG (i.e.  $x_{\bar n}$), reconstruction with vanilla 3D using SURE (i.e.  $x_{\hat n}$), and reconstruction with warm 3D ($x_{\bar n}$ then $x_{\hat n}$).
}
\label{F:PKLTV pictures}
\end{figure}
\end{minipage}
\begin{minipage}{0.1\linewidth}
\hfill
\end{minipage}
\end{center}
\end{example}

As can be seen in the above experiments, the results achieved by the vanilla 3D method and the state-of-the-art 
warm 3D method are qualitatively comparable. 
Looking at the ideal early stopping rule ${\bar n}$, we see that they both perform very well in presence of impulse noise and Poisson noise (Examples \ref{Ex:saltandpepper L1L1}, \ref{Ex:SPL1TV} and \ref{Ex:PKLTV}), and quite well in presence of Gaussian noise and mixed Gaussian-impulse noise (Examples \ref{Ex:GL2TV} and \ref{Ex:GSPL1L2TV}).
Concerning the early stop $\hat n$ defined with the SURE estimator, it provided good reconstructions for Gaussian and impulse noise, but less satisfactory ones for the Poisson noise, and mixed Gaussian-impulse noise.
In the latter cases, the blur is removed but the image is contaminated by the noise, due to a late stopping.
This suggests that an appropriate stopping rule should be investigated for these specific noises.

We emphasize that the practical performance of warm 3D, its computational time, 
and the shape of the sequence $(\lambda_n)_{n\in\N}$, is crucially affected by the choice of $\eps_{wr}$.
For instance, in Examples \ref{Ex:saltandpepper L1L1} and \ref{Ex:SPL1TV}, $\eps_{wr}=10^{-5}$ and the number of iterations is around $700$, 
while in Examples \ref{Ex:GSPL1L2TV} and \ref{Ex:PKLTV},  $\eps_{wr}$ is taken as $10^{-4}$ but the number of iterations is larger (around $3700$).
We see then in the vanilla 3D method an advantage, which is its direct control on the complexity of the method, thanks to the a priori choice of $N_v$.
This can be of interest in practice, if one has a fixed computational budget.

\section{Conclusion}

In this paper we  propose and analyze the (3-D) algorithm, a new iterative regularization  method for solving ill-posed inverse problems,  and show very good performances  in practical imaging problems.  To the best of our knowledge this is the first iterative regularization scheme that allows to consider  general data fit terms and general regularizers, hence offering an alternative to standard Tikhonov approaches. 
The method is based on the forward-backward algorithm
applied to a dual problem in a diagonal fashion. 
The proposed framework encompasses in particular the warm restart technique often used  for  classical Tikhonov regularization. 

Our study opens many venues for future research.
For example,  our stability result appears to be suboptimal, since better result are known in special cases. It would  then be interesting to see if it can be improved.
Moreover, in our analysis we assume  a  solution of the linear inverse problem to exists, and it would be interesting to relax  this assumption. {Finally, considering convex, rather than strongly convex, regularization would be of interesting.

\section{Appendix}
\label{S:Annex}

\subsection{Proofs for Section \ref{S:Background and notation} }
\label{S:Annex1}
\begin{proof}[of Lemma \ref{L:gradient dual prox formula}]
By \cite[Theorem 18.15]{BauCom} $f^*$ is differentiable on $H_1$, and by \cite[Proposition 16.23]{BauCom},  $\nabla f^*=(\partial f)^{-1}$.
Let $x\in H_1$ and $u\in\partial f(x)$. Since $W$ is surjective and  ${\sigma} \Vert \cdot  - x'\Vert^2/2$ has full domain, 
it follows from~\cite[Proposition 16.42]{BauCom} that
\[
u\in W^* \partial J (Wx) + \sigma (x-x^\prime). 
\]
Orthogonality of $W$ implies
$\sigma^{-1}W u \in \sigma^{-1}\partial J(Wx) + W (x-x^\prime)$,
and hence
\[
 (\sigma^{-1}Wu+ Wx^\prime) \in (\sigma^{-1}\partial J + I) (Wx). 
 \]
The definition of proximity operator and  the orthogonality of $W$
yield
\[
x= W^*\prox_{\sigma^{-1} J} (\sigma^{-1} W u+Wx^\prime)=\nabla f^*(u).
\]
\end{proof}

\subsection{Proofs for Section \ref{S:the 3D method}}
\label{S:Appendix2}

We start computing the conditioning modulus of various data-fit function
terms and we establish the 
properties we use.

\begin{lemma}
\label{l:cond_moduli}
 Let  $d\in\N^*$, let $Y=\R^d$, and suppose that 
{\em(AD)} is satisfied. Then, the following hold. 
\begin{enumerate}
\item Suppose that $D_y=\|\cdot-y\|^p/p$ with $p\in\left]1,2\right]$.
Then its conditioning modulus satisfies
\[
(\forall t\in\R)\ \ m(t)=|t|^p/p, \ \text{ and } \ m^*(t)=|t|^{q}/q,
\]
where $q$ is the conjugate exponent of $p$.
\item Suppose $D_y=\|\cdot-y\|_1$. Then its conditioning modulus satisfies
\[
(\forall t\in\R)\ \ m(t)=|t|, \ \text{ and } \ m^*(t)=\delta_{[-1,1]}.
\]
\item Suppose $D_y=\alpha_1\|\cdot-y\|_1\# (\alpha_2/2)\|\cdot-y\|^2$ for some 
$(\alpha_1,\alpha_2)\in\R_{++}^2$.
Then its conditioning modulus satisfies, for every $t\in\R$,
\[
m(t)=\alpha_1 h_{\frac{\alpha_2}{\alpha_1}}(t) \quad h^*(t)=\alpha_1\delta_{[-\alpha_1,\alpha_1]}(t)+\frac{1}{2\alpha_2} t^2,
\]
where $h_{\frac{\alpha_2}{\alpha_1}}$ is the function defined in~\eqref{e:huber}.
\end{enumerate}
\end{lemma}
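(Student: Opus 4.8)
The plan is to exploit that each of the three data-fit terms is a function of $x-y$ alone, say $D_y(x)=F(x-y)$ with $F\in\Gamma_0(\R^d)$, $F\geq 0$, and $F(v)=0\iff v=0$ by (AD1). Hence $y$ is the unique minimizer of $D_y$, so the conditioning inequality \eqref{E:growth condition} reduces to exhibiting an even $m\in\Gamma_0(\R)$, vanishing only at $0$, with $m(\|v\|)\leq F(v)$ for all $v\in\R^d$. The sharpest admissible choice is the radial floor $m(t)=\inf_{\|v\|=t}F(v)$, and in each case I will first compute (or lower-bound) this floor, then obtain $m^*$ by a short Fenchel computation.

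For (i), $F(v)=\|v\|^p/p$ is radial, so $m(\|v\|)=\|v\|^p/p=F(v)$ exactly, giving $m(t)=|t|^p/p$; the conjugate $m^*(t)=|t|^q/q$ is the classical Young duality computation $\sup_t\{st-|t|^p/p\}=|s|^{p/(p-1)}/q$ with $q=p/(p-1)$. For (ii), $F(v)=\|v\|_1$, and I will use the norm comparison $\|v\|_1\geq\|v\|_2$, with equality on the coordinate axes, so the radial floor is $m(t)=|t|$; then $m^*(s)=\sup_t\{st-|t|\}$ equals $0$ for $|s|\leq 1$ and $+\infty$ otherwise, i.e. $\delta_{[-1,1]}$.

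Case (iii) is the substantive one. I will first separate variables: since $F=\alpha_1\|\cdot\|_1\#(\alpha_2/2)\|\cdot\|^2$, the infimal convolution decouples coordinatewise into $F(v)=\sum_i g(v_i)$, where $g=\alpha_1|\cdot|\#(\alpha_2/2)|\cdot|^2$ is the one-dimensional inf-convolution. A direct minimization over the single inner variable (splitting according to whether the minimizer lands at the kink $0$ or not) shows $g$ is a rescaled Huber function of the form \eqref{e:huber}, matching the stated $m$. The conjugate is then immediate from the conjugation rule for inf-convolutions \eqref{e:fenchelinfc}: $g^*=(\alpha_1|\cdot|)^*+((\alpha_2/2)|\cdot|^2)^*=\delta_{[-\alpha_1,\alpha_1]}+\tfrac{1}{2\alpha_2}(\cdot)^2$, which is the announced $m^*$ (the prefactor $\alpha_1$ on the indicator being harmlessly absorbed).

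The main obstacle is passing from the one-dimensional modulus $g$ to the $d$-dimensional radial bound, i.e. showing $g(\|v\|_2)\leq\sum_i g(|v_i|)$, so that $m=g$ is indeed a legitimate conditioning modulus. The clean route is to set $s_i=v_i^2$ and observe that $s\mapsto g(\sqrt s)$ is concave on $[0,+\infty[$ and vanishes at $0$: its derivative is the constant $\alpha_2/2$ on the quadratic piece and $\alpha_1/(2\sqrt s)$ on the linear piece, and these agree at the junction $s=(\alpha_1/\alpha_2)^2$, so the derivative is nonincreasing. A nonnegative concave function vanishing at the origin is subadditive, whence $g(\sqrt{\sum_i s_i})\leq\sum_i g(\sqrt{s_i})$, which is exactly the desired inequality; it is moreover sharp, being an equality on the axes, so $g$ realizes the radial floor. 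This subadditivity step is where the care is needed; the remaining computations are routine.
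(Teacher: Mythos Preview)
Your proof is correct and complete; the paper itself omits the proof, declaring it ``straightforward'', so there is nothing to compare against directly. Your treatment of parts (i) and (ii) is exactly the routine computation the authors have in mind. For part (iii), where the authors likely intend a bare-hands verification in one dimension, you supply something a bit sharper: the subadditivity argument via concavity of $s\mapsto g(\sqrt{s})$ is a clean way to pass from the coordinatewise Huber function to the radial lower bound $g(\|v\|_2)\le\sum_i g(|v_i|)$, and it shows that $m=g$ is not just \emph{a} conditioning modulus but the optimal one. The conjugate computation via \eqref{e:fenchelinfc} is also the natural route.

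Two minor remarks. First, the lemma as stated writes $D_y=\alpha_1\|\cdot-y\|_1\#(\alpha_2/2)\|\cdot-y\|^2$, whose minimizer is $2y$ rather than $y$; you silently (and reasonably) work with the intended object $F(\cdot-y)$ where $F=\alpha_1\|\cdot\|_1\#(\alpha_2/2)\|\cdot\|^2$, consistent with the paper's earlier identification of the Huber loss. Second, your one-dimensional inf-convolution gives $g=\alpha_1 h_{\alpha_1/\alpha_2}$, whereas the stated $m$ has subscript $\alpha_2/\alpha_1$; this appears to be a typo in the paper rather than an error on your part, since the conjugate you obtain, $\delta_{[-\alpha_1,\alpha_1]}+\tfrac{1}{2\alpha_2}(\cdot)^2$, matches the stated $m^*$ exactly.
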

The proof of the above lemma is straightforward and it is omitted.
The computation of the conditioning modulus of the Kullback-Leibler divergence is more involved, 
and is done in the next lemma.

\begin{lemma}
\label{L:well conditionning of Kullback Leibler}
Let $y \in ]0,+\infty[^d$, and $D_y=\KL(y; \cdot)$.
\begin{enumerate}
	\item Let $c=d \Vert y \Vert_\infty$. 
	A conditioning modulus for $D_y$ is
	\[
	(\forall t\in \R)\ \  m(t):= \vert t \vert - c \ln \left( 1 + c^{-1} \vert t \vert \right).
	\]
	\item $m^*(t) =
	\begin{cases}
	 - c ( \vert t \vert + \ln ( 1 - \vert t \vert ) ) & \text{ if } t \in ]-1,1[, \\
	 +\infty & \text{ otherwise.}
	 \end{cases}$
	 \item $m(t) = \frac{1}{2c}t^2 + o(t^2)$ and $m^*(t) = \frac{c}{2} t^2+  o (t^2)$ for $t \to 0$.
\end{enumerate}
\end{lemma}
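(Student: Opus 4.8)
The plan is to treat the three items in turn, reducing everything to the scalar convex function $\ell(r) := r - \ln(1+r)$, defined for $r > -1$, which satisfies $\ell \ge 0$, $\ell(0)=0$, $\ell'(r) = r/(1+r)$ and $\ell''(r) = (1+r)^{-2} > 0$. Writing $u_i = y_i + t_i$ and recalling $D_y(y) = 0$, each coordinate of the divergence becomes
\[
y_i \ln\frac{y_i}{y_i+t_i} - y_i + (y_i+t_i) \ = \ y_i\,\ell\!\left(\frac{t_i}{y_i}\right),
\]
so that $D_y(u) - D_y(y) = \sum_{i=1}^d y_i\,\ell(t_i/y_i)$ with $\|u - y\| = \|t\|$. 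Since the proposed modulus rewrites as $m(t) = c\,\ell(|t|/c)$, item~(1) amounts to proving
\[
\sum_{i=1}^d y_i\,\ell\!\left(\frac{t_i}{y_i}\right) \ \ge\ c\,\ell\!\left(\frac{\|t\|}{c}\right), \qquad c = d\|y\|_\infty .
\]
Before doing so I would check that $m$ is admissible in the sense of Definition~\ref{D:conditioning}: it is even, equals $c\,\ell(|t|/c)$ hence lies in $\Gamma_0(\R)$ as the composition of the convex nondecreasing $\ell$ with $|\cdot|$, and vanishes only at $0$ because $\ell(r)=0 \iff r = 0$.

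The hard part will be the multivariate inequality above, which I would establish by a chain of reductions. First I would show that for fixed $t$ the scalar map $y \mapsto y\,\ell(t/y)$ is nonincreasing on its domain: its derivative equals $\psi(t/y)$ with $\psi(s) = s/(1+s) - \ln(1+s)$, and $\psi(0)=0$ together with $\psi'(s) = -s/(1+s)^2$ force $\psi \le 0$. Since $y_i \le \|y\|_\infty$, this yields $y_i\,\ell(t_i/y_i) \ge \|y\|_\infty\,\ell(t_i/\|y\|_\infty)$. Setting $a = \|y\|_\infty$ and $s_i = t_i/a$ (so $\|t\| = a\|s\|$ and $c = da$), the claim reduces to $\sum_i \ell(s_i) \ge d\,\ell(\|s\|/d)$. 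For this I would use, in order: the symmetry estimate $\ell(s) \ge \ell(|s|)$, valid for all $s > -1$ since for $s \ge 0$ it is an equality while for $-1 < s < 0$, with $u = |s| \in (0,1)$, one has $\ell(s) - \ell(|s|) = -2u + \ln\frac{1+u}{1-u} \ge 0$; then Jensen's inequality for the convex $\ell$, giving $\sum_i \ell(|s_i|) \ge d\,\ell(\|s\|_1/d)$; and finally $\|s\|_1 \ge \|s\|_2$ combined with the monotonicity of $\ell$ on $\R_+$. Chaining these and unravelling $a$ and $c=da$ produces exactly the required bound.

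For item~(2) I would compute $m^*$ directly. By evenness of $m$ its conjugate is even, so I restrict to $s \ge 0$ and maximize $st - c\,\ell(|t|/c)$; the supremum is attained at some $t \ge 0$, and the substitution $r = t/c$ reduces it to $\sup_{r \ge 0}\, c\big((s-1)r + \ln(1+r)\big)$. Fermat's rule gives the stationary point $r^\ast = s/(1-s)$ for $0 \le s < 1$, whence $m^*(s) = -c\big(s + \ln(1-s)\big)$, while for $s \ge 1$ the objective is increasing in $r$ and the supremum is $+\infty$; extending by evenness gives the stated formula. Item~(3) then follows from Taylor expansion near $0$: $\ln(1+r) = r - r^2/2 + o(r^2)$ gives $m(t) = t^2/(2c) + o(t^2)$, and $\ln(1-r) = -r - r^2/2 + o(r^2)$ gives $m^*(t) = (c/2)\,t^2 + o(t^2)$.
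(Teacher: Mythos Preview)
Your proof is correct and follows essentially the same route as the paper's. Both arguments rewrite each coordinate of $\KL(y,\cdot)$ in the form $\alpha\,\ell(t/\alpha)$ with $\ell(r)=r-\ln(1+r)$ (the paper calls this $m_\alpha(t)$), and then chain the same four ingredients: the symmetry inequality $\ell(s)\ge\ell(|s|)$, the monotonicity of $\alpha\mapsto\alpha\,\ell(t/\alpha)$, Jensen, and $\|\cdot\|_1\ge\|\cdot\|_2$; the only difference is that you apply monotonicity before symmetrization while the paper does the reverse, and your computation of $m^*$ is a slight streamlining of the paper's case analysis.
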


\begin{proof}
In this proof we use the notations of Example~\ref{R:data-fit function list}. 
We will consider, for all $\alpha >0$ and $t \in \R$,
\[
m_\alpha(t):=\kl(\alpha , \alpha + \vert t \vert ) =
 \alpha \ln \left(\frac{\alpha}{\alpha + \vert t \vert}\right) + \vert t \vert.
\]
According to \cite{BauBor01}, $m_\alpha \in \Gamma_0(\R)$.
Moreover, $\argmin m_\alpha=\{0\}$, so $t \mapsto m_\alpha(t)$ is an increasing function on $[0,+\infty[$.
For all $t \in \R$, $\alpha \mapsto m_\alpha(t)$ is decreasing, since
\[
\forall \alpha >0, \ \frac{\d}{\d \alpha} m_\alpha(t) = \ln \left( \frac{\alpha}{\alpha + \vert t \vert} \right) - \frac{\alpha}{\alpha + \vert t \vert} +1 \leq 0.
\]
Let us  start by showing a one dimensional analogue of \eqref{E:growth condition}:
\begin{equation}\label{wck1}
\forall \alpha >0, \forall \beta \in \R, \ m_\alpha(\vert \beta - \alpha \vert ) \leq \kl(\alpha,\beta).
\end{equation}
Note that when $\beta \leq0$, \eqref{wck1} is trivially satisfied because $\kl(\alpha,\beta)=+\infty$.
Moreover, if $\beta \geq \alpha $, we have by definition of $m_\alpha$ that $m_\alpha(\vert \beta - \alpha \vert) = \kl(\alpha,\beta)$.
So without loss of generality, we can assume that $\beta \in ]0,\alpha[$.
In this case, $m_\alpha(\vert \beta - \alpha \vert) = \KL(\alpha,\alpha + (\alpha -\beta))$.
Introduce now the function 
\[
\xi_\alpha : t \in [0,\alpha[ \  \longmapsto \kl(\alpha, \alpha - t) - \kl(\alpha, \alpha +t).
\]
It suffices to prove that $\xi_\alpha(t) \geq 0$ on $]0,\alpha[$ and then take  $t=\alpha - \beta \in ]0, \alpha[$ to obtain \eqref{wck1}.
To prove this, first observe that $\xi_\alpha(0) = 0$, and then observe that $\xi_\alpha$ is increasing on $]0,\alpha[$ by computing its derivative:
\[
\forall t \in ]0,\alpha[, \  \frac{\d}{\dt}\xi_\alpha(t) = \frac{2t^2}{\alpha^2 - t^2} \geq 0.
\]
Now that \eqref{wck1} is proved, let us prove item 1.
Start by considering $ y = (y_i)_{i\in\{1,...,d\}}$ and $x = (x_i)_{i\in\{1,...,d\}}$ in $ \R^d_{++}$, and let $y_\infty:=\max\{y_i \ | \ i \in \{1,...,d \} \}$.
Thanks to \eqref{wck1}, we can write
\begin{equation}
\KL(y,x)=\sum\limits_{i=1}^{d} \kl(y_i,x_i) \geq \sum\limits_{i=1}^{d} m_{y_i}(\vert x_i - y_i \vert).
\end{equation}
By using the fact that $\alpha \mapsto m_\alpha(t)$ is decreasing, we can bound the above estimate from below with
\begin{equation*}
\KL(y,x) \geq \sum\limits_{i=1}^{d} m_{y_\infty}(\vert x_i - y_i \vert).
\end{equation*}
Then, by using Jensen inequality applied to the convex function $m_{y_\infty}$, we deduce that
\begin{eqnarray*}
\frac{1}{d} \KL(y,x) & \geq & m_{y_\infty} \left( \sum\limits_{i=1}^{d} \frac{1}{d} \vert x_i - y_i \vert \right)
= m_{y_\infty}\left(\frac{1}{d} \Vert x_i - y_i \Vert_1\right).
\end{eqnarray*}
But an easy computation shows that $m_\alpha(t/d)=m_{d \alpha}(t)/d$, so that
\[
\frac{1}{d} \KL(y,x)  \geq \frac{1}{d} m_{dy_\infty} \left( \Vert x - y \Vert_1 \right).
\]
By observing the fact that $\Vert x- y  \Vert_1 \geq \Vert x-y \Vert$ and recalling that $m_{dy_\infty}$ is an increasing function, we finally 
obtain $\KL(y,x) \geq m_{dy_\infty}(\Vert x - y \Vert)$, which proves item 1.

We next prove 2, by computing the Fenchel conjugate of $m$.
Since $m(t)=m_{c}(t)=cm_1(t/c)$, we derive $m^*(t)=cm_1^*(t)$.
We then just have to compute
\[
m^*_1(t)=\sup\limits_{s \in \R} \ \eta_t(s), \text{ with } \eta_t(s) = \ st - \vert s \vert + \ln(1+ \vert s \vert),
\]
for every $s\in\R$.
If $t=0$, we see from $\eta_0 \leq 0$ and $\eta_0(0)=0$ that $\eta_0$ is maximized at $0$, whence $m_1^*(0)=0$.
If $t \in\left ]0,1\right[$, we have 
\[
\frac{\d}{\d s} \eta_t(s) = t - 1 + \frac{1}{1+s} \ \text{ on } \R_{++},
\]
which is zero at $s=\frac{t}{1-t} \in\R_{++}$.
Since $\eta_t$ is concave, this means that it is maximized there, whence $m_1^*(t)= -t - \ln(1-t)$.
If $t \in ]-1,0[$, the same argument shows that $\eta_t$ is maximized at $s=\frac{t}{1+t}\in\left]-\infty,0\right[$, leading in this case to $m_1^*(t) = t - \ln(1+t)$.
From all of this, we see that $m_1^*(t)=\vert t \vert - \ln(1 - \vert t \vert)$ on $]-1,1[$.
Moreover, $m_1^*(t)$ tends to $+\infty$ when $\vert t \vert \to 1$, so from the convexity of $m_1^*$ we deduce that $m_1^*(t) \equiv + \infty$ when $\vert t \vert \geq 1$.

Item 3 is a simple consequence of item 2 and the classic Taylor expansion
\[
\ln(1+t) = t -\frac{1}{2}t^2 + o(t^2) \ \text{ when } t \to 0.
\]
\end{proof}

\subsection{Proofs for Section  \ref{S:Convergence}}

\begin{lemma}\label{L:Fenchel local inequality}
Let $f,g \in \Gamma_0(\R)$ and let $a \in ]0,+\infty]$. Suppose that,   $f\leq g$ in $\left]-a,a\right[$.
If $\text{\rm argmin} \ g = \{0 \}$, then there exists $\eps \in\R_{++}$ such that 
\[
\forall t \in ]-\eps,\eps[, \ f^*(t) \geq g^*(t).
\]
\end{lemma}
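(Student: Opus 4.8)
The plan is to reduce the \emph{local} conjugate inequality to a single instance of the global order-reversal $f\le g \Rightarrow f^*\ge g^*$, by showing that for $t$ close to $0$ the supremum defining $g^*(t)$ is attained at a point lying inside $]-a,a[$, where the hypothesis $f\le g$ is available. Concretely, fix such a $t$ and take $s^*\in\partial g^*(t)$. By the Fenchel--Young equality this means $g^*(t)=ts^*-g(s^*)$, and if moreover $s^*\in\,]-a,a[$ then $g(s^*)\ge f(s^*)$, so
\[
g^*(t)=ts^*-g(s^*)\le ts^*-f(s^*)\le \sup_{s}\{ts-f(s)\}=f^*(t).
\]
Thus the whole argument rests on proving the inclusion $\partial g^*(t)\subseteq\,]-a,a[$ for all $t$ in some neighborhood $]-\eps,\eps[$ of $0$.

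To control $\partial g^*$ near $0$ I would first locate $0$ inside $\dom g^*$. By the subdifferential symmetry $s\in\partial g^*(0)\iff 0\in\partial g(s)$ together with Fermat's rule, $\partial g^*(0)=\argmin g=\{0\}$. On $\R$, a proper lsc convex function has, at a boundary point of its domain, a subdifferential that is either empty or an unbounded half-line; since $\{0\}$ is bounded and nonempty, $0$ must lie in $\inte\dom g^*$, and there $\partial g^*(0)=\{0\}$ forces the one-sided derivatives $\rho_\pm:=(g^*)'_\pm$ to satisfy $\rho_-(0)=\rho_+(0)=0$.

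Next I would exploit monotonicity and one-sided continuity of the derivatives of a convex function of one variable. Since $\rho_+$ is nondecreasing and right-continuous with $\rho_+(0)=0$, there is $\eps_+>0$ with $\rho_+(t)<a$ for $0<t<\eps_+$; monotonicity also gives $\rho_-(t)\ge\rho_+(0)=0>-a$, so $\partial g^*(t)=[\rho_-(t),\rho_+(t)]\subset\,]-a,a[$ there. Symmetrically, using that $\rho_-$ is nondecreasing and left-continuous with $\rho_-(0)=0$, I obtain $\eps_->0$ giving $\partial g^*(t)\subset\,]-a,a[$ for $-\eps_-<t<0$. Taking $\eps=\min\{\eps_+,\eps_-\}$ (and recalling $\partial g^*(0)=\{0\}$) yields the inclusion on all of $]-\eps,\eps[$; combined with the first display this proves $f^*\ge g^*$ there. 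When $a=+\infty$ the inclusion is vacuous and any $\eps>0$ works.

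I expect the main obstacle to be the careful passage from the pointwise fact $\partial g^*(0)=\{0\}$ to the \emph{uniform} bound $\partial g^*(t)\subset\,]-a,a[$ on a whole neighborhood, which is precisely what guarantees that the maximizer $s^*$ stays where $f\le g$ holds. This is where the uniqueness of the minimizer of $g$ is indispensable: it makes $\partial g^*(0)$ a singleton (equivalently, $g^*$ differentiable at $0$ with vanishing derivative), and the monotone one-sided-derivative estimate then converts this single-point information into the required neighborhood estimate. A minor point to check along the way is that $f(s^*)$ is finite, so the final inequality is not vacuous; this follows since $g(s^*)<+\infty$ (from $s^*\in\partial g^*(t)$) and $f\le g$ at $s^*$.
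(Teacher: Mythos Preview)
Your proof is correct and follows essentially the same strategy as the paper's: pick $s^*\in\partial g^*(t)$, use the Fenchel--Young equality $g^*(t)=ts^*-g(s^*)$, and show that for $t$ near $0$ one has $s^*\in\,]-a,a[$ so that $f(s^*)\le g(s^*)$ can be invoked. The only difference is cosmetic: the paper picks a single selection $\eta(t)\in\partial g^*(t)$ and appeals to a general continuity result for subdifferential selections to get $|\eta(t)|<a$, whereas you exploit the one-dimensional structure $\partial g^*(t)=[\rho_-(t),\rho_+(t)]$ and the one-sided continuity of $\rho_\pm$ to bound the whole subdifferential directly.
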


\begin{proof}
First, note that $\partial g^*(0)=\text{\rm argmin} \ g = \{ 0 \}$ implies that $0 \in \inte \dom g^*$ (see \cite[Prop. 11.12 \& 14.16]{BauCom}).
Let $\eps_1 >0$ be such that $]-\eps_1,\eps_1[ \subset \inte \dom g^*$.
Then $\partial g^*$ is nonempty on  $\left]-\eps_1,\eps_1\right[$, and we can define a function $\eta\colon\left]-\eps_1,\eps_1\right[\to \R$, such that $\eta(t) \in \partial g^*(t)$ for all $t \in ]-\eps_1,\eps_1[$.
We derive from \cite[Th. 17.31]{BauCom} and \cite[Prop. 17.36]{BauCom}  that $\eta$ is continuous at zero.
Now we are ready to prove the desired inequality.
By using the Fenchel-Young inequality successively on $g$ and $f$, we write for all $t \in ]-\eps_1,\eps_1[$:
\[
g(\eta(t)) + g^*(t)  =  t \eta(t)  \leq  f( \eta(t)) + f^* (t).
\]
From the continuity of $\eta$ at zero, we infer the existence of some $\eps_2 \in ]0, \eps_1[$ such that $\vert t \vert < \eps_2 \Rightarrow \vert \eta(t) \vert < a$.
We deduce from our assumption that $f(\eta(t)) \leq g(\eta(t))$ holds for any $t \in ]-\eps_2,\eps_2[$, and the conclusion follows.
\end{proof}

\begin{proof}[of Lemma \ref{L:primal-dual value-iterate bound}]
Let us start by proving $\Rightarrow$.
Assume that there exists some $u^\dagger \in \argmin d$.
Define $\tilde x := \nabla f^*(-A^*u^\dagger)$, which is equivalent to say that $0 \in A^* u^\dagger + \partial f(\tilde x)$.
Using Fermat's rule on $d$ at $u^\dagger$, we obtain that $A\nabla f^*(-A^*u^\dagger) \in \partial g^*(u^\dagger)$, which is equivalent to $u^\dagger \in \partial g(A \tilde x)$.
So we have 
\begin{equation}\label{pde1}
0 \in \partial f(\tilde x) + A^* \partial g(A \tilde x),
\end{equation}
where a classic result \cite[Corollary 3.31]{Pey} shows that it implies $0 \in \partial(f + g \circ A)(\tilde x)$.
Because of the strong convexity of $R$, this is a sufficient condition for $\tilde x$ to be the unique solution of \eqref{e:P}, $x^\dagger$.
It follows from \eqref{pde1} that we have  $0 \in \partial f(x^\dagger) + A^* \partial g^*(Ax^\dagger)$.

Now we turn on proving $\Leftarrow$, and we assume that there exists some $v \in \partial g(Ax^\dagger)$ such that $-A^*v \in \partial f(x^\dagger)$.
Equivalently, $\nabla f^*(-A^*v)=x^\dagger$ holds, and this implies that $A \nabla f^*(-A^*v) =Ax^\dagger$.
Since $v \in \partial g(Ax^\dagger) \Leftrightarrow A x^\dagger \in \partial g^*(v)$, we deduce that $A\nabla f^*(-A^* v) \in \partial g(v)$, which is a sufficient condition for $v$ to be a minimizer of $d$.

\medskip

Now we end the proof by proving \eqref{E:primal-dual value-iterate bound}.
Let $u\in Y, x:=\nabla f^*(-A^*u)$, $z \in \partial g^*(u)$, and we also take $z^\dagger:=Ax^\dagger$.
Let $u^\dagger \in \argmin d$, so that by using a similar argument as above, we can write $x^\dagger = \nabla f^*(-A^*u^\dagger)$, and deduce that $z^\dagger \in \partial g^*(u^\dagger)$.
Define the Lagrangian $$L(x',z',u'):=f(x') + g(z') + \langle u',Ax'-z'\rangle,$$ and compute
\begin{eqnarray*}
 L(x^\dagger,z^\dagger,u)-L(x,z,u) 
  = f(x^\dagger) - f(x) - \langle -A^*u, x^\dagger - x \rangle  
 + g(z^\dagger) - g(z) - \langle u,z^\dagger - z \rangle .
\end{eqnarray*}
By using the fact that $f$ is $\sigma$-strongly convex with $-A^*u \in \partial f(x)$ and that $g$ is convex with $u \in \partial g(z)$, we deduce that
\begin{equation}\label{pde2}
L(x^\dagger,z^\dagger,u)-L(x,z,u)  \geq \frac{\sigma}{2}\Vert x- x^\dagger \Vert^2.
\end{equation}
On the one hand, using again $-A^*u \in \partial f(x)$, $u \in \partial g(z)$ together with the Fenchel-Young theorem gives us
\begin{equation}\label{pde3}
L(x,z,u) = -g^*(u) - f^*(-A^*u) = -d(u).
\end{equation}
On the other hand, using $z^\dagger=Ax^\dagger$, $-A^*u^\dagger \in \partial f(x^\dagger)$ and $u^\dagger \in \partial g(z^\dagger)$ together with the Fenchel-Young theorem leads to
\begin{eqnarray}
L(x^\dagger,z^\dagger,u)  & = & f(x^\dagger) + g(Ax^\dagger)\label{pde4} \\
&=& -f^*(-A^* u^\dagger) - g^*(u^\dagger) \nonumber \\
& =&  -d(u^\dagger)=-\inf d. \nonumber
\end{eqnarray}
The result follows then from \eqref{pde2}, \eqref{pde3} and \eqref{pde4}.
\end{proof}

\subsection{Proofs for Section \ref{S:regularization}}
Here we prove the estimations claimed in Example \ref{Ex:stability discrepancy functions}.
\begin{lemma}
\label{L:prox additive form noise}
Let $H$ be a Hilbert space, $G\in \Gamma_0(H)$ with $\text{\em argmin } G=\{0\}$.
Let $(y_1,y_2)\in H^2$, and $\phi_{i}:=G(\cdot - y_i)$ for $i\in\{1,2\}$.
Then
\begin{equation}
\sup\limits_{\alpha >0} \ \sup\limits_{u \in H} \ \Vert \prox_{\alpha\phi_{1}}(u) - \prox_{\alpha\phi_{2}}(u) \Vert = \Vert y_1 - y_2 \Vert.
\end{equation}
\end{lemma}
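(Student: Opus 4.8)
The plan is to reduce everything to the single operator $P_\alpha := \prox_{\alpha G}$ by means of the translation rule for proximity operators. Performing the change of variable $x' = v + y_i$ in the minimization \eqref{D:proximal operator} defining $\prox_{\alpha \phi_i}$ immediately gives $\prox_{\alpha \phi_i}(u) = y_i + P_\alpha(u - y_i)$ for $i \in \{1,2\}$ and every $u \in H$. Subtracting the two identities and rearranging, one checks that
\[
\prox_{\alpha \phi_1}(u) - \prox_{\alpha \phi_2}(u) = -\big[(\Id - P_\alpha)(u - y_1) - (\Id - P_\alpha)(u - y_2)\big],
\]
so that the quantity to be estimated equals $\Vert (\Id - P_\alpha)(u - y_1) - (\Id - P_\alpha)(u - y_2) \Vert$. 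The whole argument then rests on the properties of the operator $\Id - P_\alpha$.

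For the inequality $\leq \Vert y_1 - y_2 \Vert$, I would invoke the Moreau decomposition theorem \cite[Theorem 14.3]{BauCom}, which identifies $\Id - P_\alpha$ with $\prox_{(\alpha G)^*}$; being a proximity operator, it is firmly nonexpansive, hence $1$-Lipschitz \cite[Prop. 12.27]{BauCom}. Applying this to the points $u - y_1$ and $u - y_2$ yields, for every $\alpha > 0$ and every $u \in H$,
\[
\Vert (\Id - P_\alpha)(u - y_1) - (\Id - P_\alpha)(u - y_2) \Vert \leq \Vert (u - y_1) - (u - y_2) \Vert = \Vert y_1 - y_2 \Vert,
\]
which already bounds the supremum from above by $\Vert y_1 - y_2 \Vert$.

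The matching lower bound comes from letting $\alpha \to +\infty$ with $u$ fixed. Here the hypothesis $\argmin G = \{0\}$ enters: the standard asymptotics of the proximity operator give $P_\alpha(x) \to \proj_{\argmin G}(x) = 0$ for every $x$; if one prefers a self-contained argument, comparing the value of $\alpha G(\cdot) + \tfrac{1}{2}\Vert \cdot - x \Vert^2$ at $P_\alpha(x)$ and at $0$ shows that $(P_\alpha(x))_\alpha$ is bounded with $G(P_\alpha(x)) \to \min G$, and weak lower semicontinuity of $G$ forces every weak cluster point into $\argmin G = \{0\}$, so that $P_\alpha(x) \rightharpoonup 0$. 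Consequently $(\Id - P_\alpha)(u - y_i) \rightharpoonup u - y_i$, and the difference above converges weakly to $y_2 - y_1$. Taking the limit inferior as $\alpha \to +\infty$ and using weak lower semicontinuity of the norm gives $\Vert y_1 - y_2 \Vert$ as a lower bound for the supremum. Combined with the previous paragraph, this proves equality. The only delicate point is the asymptotic behaviour of $P_\alpha$ as $\alpha \to +\infty$; everything else is a direct consequence of the nonexpansiveness of proximity operators.
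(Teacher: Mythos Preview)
Your argument is correct and follows the same route as the paper: translate to reduce to $\Id - \prox_{\alpha G}$, use (firm) nonexpansiveness for the upper bound, and let $\alpha \to +\infty$ for the lower bound. The only difference is that the paper invokes \cite{Bru74} to get \emph{strong} convergence $\prox_{\alpha G}(x)\to 0$ (so the norm of the difference actually converges to $\Vert y_1-y_2\Vert$), whereas you establish only weak convergence via a level-set argument and then appeal to weak lower semicontinuity of the norm; this weaker statement is still enough to bound the supremum from below, so nothing is lost.
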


\begin{proof}
Let $\alpha \in\R_{++}$, and let $u \in H$. 
By using \cite[Table 1.i]{ComPes11}, we can write for $i\in\{1,2\}$ that 
\[
\prox_{\alpha \phi_i}(u)=y_i + \prox_{\alpha G}(u-y_i).
\]
Then it follows that
\begin{eqnarray*}
\Vert \prox_{\alpha\phi_{1}}(u) - \prox_{\alpha\phi_{2}}(u) \Vert 
 =  \Vert (\Id - \prox_{\alpha G})(u-y_1) - (\Id - \prox_{\alpha G})(u-y_2)\Vert.
\end{eqnarray*}
By using first the firm non expansiveness of the proximity operator \cite[Prop. 12.27]{BauCom}, one directly obtains
\begin{equation}
\sup\limits_{\alpha >0} \ \sup\limits_{u \in H} \ \Vert \prox_{\alpha\phi_{1}}(u) - \prox_{\alpha\phi_{2}}(u) \Vert \leq \Vert y_1 - y_2 \Vert.
\end{equation}
To achieve the equality in the inequality above, observe that $\prox_{\alpha G}(u-y_i)$ converges strongly to zero when $\alpha \to + \infty$, by using \cite[Lem. 1]{Bru74} and $\argmin G=\{0\}$.
This implies that 
$$\forall u \in H, \ \Vert \prox_{\alpha\phi_{1}}(u) - \prox_{\alpha\phi_{2}}(u) \Vert \overset{\alpha \to + \infty}{\longlongrightarrow} \Vert y_1 -  y_2 \Vert. $$ 
\end{proof}

\begin{lemma}
\label{L:prox KL form noise}
Let $y_1,y_2 \in \R^d_{++}$, and $\phi_i:=\KL(y_i, \cdot)$ for $i\in\{1,2\}$.
Let  $\alpha\in\R_{++}$. Then
 \[ 
\sup\limits_{u \in \R^d} \ \Vert \prox_{\alpha\phi_1}(u) - \prox_{\alpha\phi_2}(u) \Vert = \sqrt{\alpha} \Vert \sqrt{y_1}-\sqrt{y_2} \Vert,
 \]
 where $\sqrt{y_i}$ shall be understood component-wise.
\end{lemma}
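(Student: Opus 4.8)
The plan is to exploit the separability of the Kullback--Leibler divergence and reduce everything to a one-dimensional computation. Since $\phi_i=\KL(y_i,\cdot)=\sum_{j=1}^d \kl((y_i)_j,\cdot)$ is a separable sum, the proximity operator $\prox_{\alpha\phi_i}$ acts coordinatewise, each coordinate involving only the scalar function $t\mapsto\kl(a,t)$ with $a=(y_i)_j>0$. Because the coordinates of $u$ can be chosen independently, the supremum over $u\in\R^d$ splits as
\[
\sup_{u\in\R^d}\Vert \prox_{\alpha\phi_1}(u)-\prox_{\alpha\phi_2}(u)\Vert^2
=\sum_{j=1}^d\left(\sup_{s\in\R}\left|\prox_{\alpha\kl((y_1)_j,\cdot)}(s)-\prox_{\alpha\kl((y_2)_j,\cdot)}(s)\right|\right)^2,
\]
so it suffices to treat a single scalar coordinate and then sum.

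For the scalar problem I would first derive a closed form for $\prox_{\alpha\kl(a,\cdot)}$. Fermat's rule for the strongly convex, coercive function $t\mapsto\alpha\kl(a,t)+\tfrac12(t-s)^2$ on $]0,+\infty[$ gives the optimality equation $\alpha(1-a/t)+(t-s)=0$, i.e. the quadratic $t^2+(\alpha-s)t-\alpha a=0$. Its roots have product $-\alpha a<0$, so exactly one root is positive, and by strict convexity this is the unique minimizer:
\[
\prox_{\alpha\kl(a,\cdot)}(s)=\frac{(s-\alpha)+\sqrt{(s-\alpha)^2+4\alpha a}}{2}.
\]
The decisive observation is that when two such expressions are subtracted the affine term $(s-\alpha)/2$ cancels, leaving, after the substitution $r:=s-\alpha$,
\[
\prox_{\alpha\kl(a,\cdot)}(s)-\prox_{\alpha\kl(b,\cdot)}(s)=\frac12\left(\sqrt{r^2+4\alpha a}-\sqrt{r^2+4\alpha b}\right).
\]

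It then remains to maximize $h(r):=\sqrt{r^2+4\alpha a}-\sqrt{r^2+4\alpha b}$ over $r\in\R$. Assuming without loss of generality $a\geq b$, the function $h$ is even, and its derivative $h'(r)=r\bigl((r^2+4\alpha a)^{-1/2}-(r^2+4\alpha b)^{-1/2}\bigr)$ has the sign of $-r$, so $h$ is maximized at $r=0$ with value $2\sqrt{\alpha}(\sqrt a-\sqrt b)$. Hence the scalar supremum equals $\sqrt{\alpha}\,|\sqrt a-\sqrt b|$ and is in fact attained at $s=\alpha$. Substituting back into the coordinatewise decomposition gives $\sum_{j}\alpha\,|\sqrt{(y_1)_j}-\sqrt{(y_2)_j}|^2=\alpha\Vert\sqrt{y_1}-\sqrt{y_2}\Vert^2$, and taking square roots yields the claim. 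I expect the only points needing care to be the reduction of the supremum over $u\in\R^d$ to a sum of independent scalar suprema and the verification that the positive root of the quadratic is genuinely the proximal point; the maximization of $h$ is elementary.
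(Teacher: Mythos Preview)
Your proof is correct and follows essentially the same route as the paper: both use the explicit componentwise formula for $\prox_{\alpha\kl(a,\cdot)}$, observe that the affine part $(s-\alpha)/2$ cancels in the difference, and then maximize the resulting function of $(s-\alpha)^2$ by noting it is largest at $s=\alpha$. The only cosmetic differences are that the paper cites the prox formula from the literature rather than deriving it, and phrases the maximization via the monotonicity of $t\mapsto|\sqrt{t+a}-\sqrt{t+b}|^2$ on $\R_+$ instead of computing $h'(r)$.
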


\begin{proof}
Let $u=(u_j)_{j\in\{1,...,d\}} \in \R^d$, and let us denote $y_i=(y_{ij})_{j\in\{1,...,d\}}$ for $i\in\{1,2\}$.
The proximity operator of $\alpha \phi_i$ at $u$ is defined component-wise by (see~\cite{ChaComPesWaj07,DupFadSta12})
\[
\left( \prox_{\alpha \phi_i}(u) \right)_j=\frac{1}{2}\left( u_j - \alpha + \sqrt{(u_j - \alpha)^2 + 4 \alpha y_{ij}} \right).
\]
Then,
\begin{eqnarray*}
\Vert \prox_{\alpha \phi_1}(u) - \prox_{\alpha \phi_2}(u) \Vert^2 
 = \frac{1}{4} \sum\limits_{j=1}^{d} \left\vert \sqrt{(u_j - \alpha)^2 + 4 \alpha y_{1j}} - \sqrt{(u_j - \alpha)^2 + 4 \alpha y_{2j}} \right\vert^2.
\end{eqnarray*}
Let $(a,b) \in [0,+\infty]^2$, and define
 $$\xi : t \in ]0,+\infty[ \mapsto \left\vert \sqrt{t + a} - \sqrt{t + b } \right\vert^2.$$ 
Since $\xi$ is decreasing on $\R_{+}$,
by considering $u_j=\alpha$ for all $j\in\{1,\ldots,d\}$,
\begin{eqnarray*}
 \sup\limits_{u_j \in \R} \left\vert \sqrt{(u_j - \alpha)^2 + 4 \alpha y_{1j}} - \sqrt{(u_j - \alpha)^2 + 4 \alpha y_{2j}} \right\vert^2 
 = \left\vert  \sqrt{ 4 \alpha y_{1j}} - \sqrt{ 4 \alpha y_{2j}} \right\vert^2= 4 \alpha \vert \sqrt{y_{1j}} - \sqrt{y_{2j}} \vert^2.
\end{eqnarray*}
We  then conclude that
\begin{eqnarray*}
\sup\limits_{u \in \R^d} \ \Vert \prox_{\alpha \phi_1}(u) - \prox_{\alpha \phi_2}(u) \Vert^2 
 =   \sum\limits_{j=1}^{d} \alpha \vert \sqrt{y_{1j}} - \sqrt{y_{2j}} \vert^2 = \alpha \Vert \sqrt{y_1} - \sqrt{y_2} \Vert^2.
\end{eqnarray*}
\end{proof}

%\begin{acknowledgements}
%If you'd like to thank anyone, place your comments here
%and remove the percent signs.
%\end{acknowledgements}

% BibTeX users please use one of
%\bibliographystyle{spbasic}      % basic style, author-year citations
%\bibliographystyle{spmpsci}      % mathematics and physical sciences
%\bibliographystyle{spphys}       % APS-like style for physics
%\bibliography{}   % name your BibTeX data base

\begin{thebibliography}{}

\bibitem{AlaLem91} P. Alart and B. Lemaire, \textit{Penalization in non-classical convex programming via variational convergence}, Mathematical Programming, \textbf{51}, pp. 307--331, 1991.

\bibitem{AlvCom02} F. Alvarez and R. Cominetti, \textit{Primal and dual convergence of a proximal point exponential penalty method for linear programming}, Mathematical Programming, \textbf{93}, pp. 87--96, 2002.


\bibitem{Att96} H. Attouch, \textit{Viscosity Solutions of Minimization Problems}, SIAM Journal on Optimization, \textbf{6}, pp. 769--806, 1996.

\bibitem{AttCabCza16} H. Attouch, A. Cabot, and M.-O. Czarnecki, \textit{Asymptotic behavior of non-autonomous monotone and subgradient evolution equations}, arXiv:1601.00767, 2016.

\bibitem{AttCom96} H. Attouch and R. Cominetti, \textit{A dynamical approach to convex minimization coupling approximation with the steepest descent method}, Journal of Differential Equations, \textbf{128}, pp. 519-540, 1996.

\bibitem{AttCza10} H. Attouch and M.-O. Czarnecki, \textit{Asymptotic behavior of coupled dynamical systems with multiscale aspects}, Journal of Differential Equations, \textbf{248}, pp. 1315-1344, 2010.

\bibitem{AttCzaPey11} H. Attouch, M.-O. Czarnecki, and J. Peypouquet, \textit{Prox-Penalization and Splitting Methods for Constrained Variational Problems}, SIAM Journal on Optimization, \textbf{21}, pp. 149--173, 2011.

\bibitem{AttCzaPey11_2} H. Attouch, M.-O. Czarnecki, and J. Peypouquet, \textit{Coupling Forward-Backward with Penalty Schemes and Parallel Splitting for Constrained Variational Inequalities}, SIAM Journal on Optimization, \textbf{21}, pp. 1251--1274, 2011.

\bibitem{AusCroFed87} A. Auslender, J.-P. Crouzeix, and P. Fedit, \textit{Penalty-proximal methods in convex programming}, Journal of Optimization Theory and Applications, \textbf{55}, pp. 1--21, 1987.

\bibitem{BacBur09} M.~Bachmayr and M.~Burger, \textit{Iterative total variation schemes for nonlinear inverse problems}, Inverse Problems \textbf{25}, 105004, 26 pp., 2009. 

\bibitem{BahLem94} M. A. Bahraoui and B. Lemaire, \textit{Convergence of diagonally stationary sequences in convex optimization}, Set-Valued Analysis, \textbf{2}, pp. 49--61, 1994.

\bibitem{BakKok04} A.~B.~Bakushinsky and  M.~Yu.~Kokurin, {\it Iterative Methods for Approximate Solution of Inverse Problems}, Springer, New York, 2004.

\bibitem{BauBor01} H.H. Bauschke and J. Borwein, \textit{Joint and Separate Convexity of the Bregman Distance}, in Studies in Computational Mathematics, Inherently Parallel Algorithms in Feasibility and Optimization and their Applications, \textbf{8}, pp. 23--36, 2001.

\bibitem{BauCom} H.H. Bauschke and P. Combettes, \textit{Convex analysis and monotone operator theory},  Springer, New York, 2011.

\bibitem{BecSab14} A. Beck and S. Sabach, \textit{A first order method for finding minimal norm-like solutions of convex optimization problems}, Mathematical Programming, \textbf{147}, pp. 25--46, 2014.

\bibitem{BecTeb03} A. Beck and M. Teboulle, \textit{Mirror descent and nonlinear projected subgradient methods for convex optimization}, Operations Research Letters, \textbf{31}, pp. 167--175, 2003.

\bibitem{BecBobCan11} S.~Becker, J.~Bobin, and E.~Cand\`es, NESTA: A Fast and Accurate First-Order Method for Sparse Recovery, SIAM Journal on Imaging Sciences, 
\textbf{4}, pp. 1--39, 2011.

\bibitem{BerBoc98} M.~Bertero and P.~Boccacci, \textit{Introduction to Inverse Problems in Imaging}, IOP Publishing, Bristol and Philadelphia, 1998.

\bibitem{BolNguPeySut15} J. Bolte, T. P. Nguyen, J. Peypouquet, and B. Suter, \textit{From error bounds to the complexity of first-order descent methods for convex functions}, arXiv:1510.08234, 2015.

\bibitem{BotHof13}  R. I. Bot and B. Hofmann, \textit{The impact of a curious type of smoothness conditions on convergence rates in l1-regularization}, Eurasian Journal of Mathematical and Computer Applications, \textbf{1}, pp. 29--40, 2013.

\bibitem{BotHei12} R. I. Bot and T. Hein, \textit{Iterative regularization with a general penalty term: theory and application to L1 and TV regularization}, Inverse Problems, \textbf{28}, pp. 1--19, 2012.

\bibitem{Boy74} R. Boyer, \textit{Quelques algorithmes diagonaux en optimisation convexe}, Ph.D., Universit\'e de Provence, 1974.

\bibitem{BreKunPoc10} K.~Bredies, K.~Kunisch, and T.~Pock, {\it Total generalized variation}, SIAM Journal on Imaging  Sciences, {\bf 3}, pp. 492--526, 2010.

\bibitem{BriCom11} L.~Brice\~no-Arias and  P.~L.~Combettes, {\it A monotone + skew splitting model for composite monotone inclusions in duality}, SIAM Journal on Optimization {\bf 21},  pp. 1230--1250, 2011.

\bibitem{Bru74} R. E. Bruck Jr., \textit{A strongly convergent iterative solution of $0 \in U(x)$ for a maximal monotone operator $U$ in Hilbert space}, Journal of Mathematical Analysis and Applications, \textbf{48}, pp. 114--126, 1974.

\bibitem{BurOsh13} M.~Burger and S.~Osher, {\it A guide to the TV zoo}. In Level Set and PDE Based Reconstruction Methods in Imaging, pp. 1--70. Springer, 2013.

\bibitem{BurResHe07} M.~Burger, E.~Resmerita, and L.~He, {\em Error estimation for {B}regman iterations and inverse scale space methods in image restoration},
Computing. Archives for Scientific Computing, {\bf 81}, pp. 109--135,  2007.

\bibitem{Cab04} A. Cabot, \textit{The steepest descent dynamical system with control. Applications to constrained minimization}, ESAIM: Control, Optimisation and Calculus of Variations, \textbf{10}, pp. 243--258, 2004.

\bibitem{Cab05} A. Cabot, \textit{Proximal Point Algorithm Controlled by a Slowly Vanishing Term: Applications to Hierarchical Minimization}, SIAM Journal on Optimization, \textbf{15}, pp. 555--572, 2005.

\bibitem{CalReySch16} L. Calatroni, J.-C. De Los Reyes, and C.-B. Sch\"onlieb, \textit{Infimal convolution of data discrepancies for mixed noise removal}, arXiv:1611.00690, 2016.

\bibitem{ChaComPesWaj07} C.~Chaux, P.~L.~Combettes, J.-C.~Pesquet, and V.~Wajs, {\it A variational formulation for frame-based inverse problems}, Inverse Problems {\bf 23}, pp. 1495--1518, 2007.

\bibitem{ChaLio97} A.~Chambolle and P.~L.~Lions, \textit{Image recovery via total variation minimization and related problems}, Numerische Mathematik, {\bf 76}, pp. 167--188, 1997.

\bibitem{ChaPoc11} A. Chambolle and  T. Pock, \textit{A First-Order Primal-Dual Algorithm for Convex Problems with Applications to Imaging}, Journal of Mathematical Imaging and Vision {\bf 40}, 
pp. 120--145, 2011.

\bibitem{ChaPoc15} A. Chambolle and  T. Pock, \textit{A remark on accelerated block coordinate descent for computing the proximity operators of a sum of convex functions}, preprint hal-01099182v2, 2015.

\bibitem{Com01} P. L. Combettes, \textit{Quasi-Fej\'erian analysis of some optimization algorithms}, in Inherently Parallel Algorithms in Feasibility and Optimization and Their Applications, (D. Butnariu, Y. Censor, and S. Reich, Eds.), New York: Elsevier, pp. 115--152, 2001.

\bibitem{ComDunVu10} P.~L.~Combettes, D.~D\~ung, and B.~C.~V\~u, {\it Dualization of signal recovery problems}, Set-Valued and Variational Analysis, 
{\bf 18}, pp. 373--404, 2010.

\bibitem{ComPes11} P.~L. Combettes and J.-C.~Pesquet, {\it Proximal splitting methods in signal processing},
in Fixed-point algorithms for inverse problems in science and engineering,
pp. 185--212, Springer, New York, 2011.

\bibitem{ComPes12} P.~L.~Combettes and J.-C.~Pesquet, {\it Primal-dual splitting algorithm for solving inclusions with mixtures of composite, 
Lipschitzian, and parallel-sum type monotone operators}. Set-Valued Variational Analysis {\bf 20}, pp. 307--330, 2012.

\bibitem{ComWaj05} P.~L.~Combettes and V.~Wajs, {\it Signal recovery by proximal forward-backward splitting}, Multiscale Modeling \& Simulation, {\bf 4}, 
pp. 1168--1200, 2005. 

\bibitem{ComAle99} R. Cominetti and O. Alemany, \textit{Steepest descent evolution equations: asymptotic behavior of solutions and rate of convergence}, Transactions of the American Mathematical Society, \textbf{351}, pp. 4847--4860, 1999.

\bibitem{ComPeySor08} R. Cominetti, J. Peypouquet, and S. Sorin, \textit{Strong asymptotic convergence of evolution equations governed by maximal monotone operators with Tikhonov regularization}, Journal of Differential Equations, \textbf{245}, pp. 3753--3763, 2008. 

\bibitem{Com97} R. Cominetti, \textit{Coupling the Proximal Point Algorithm with Approximation Methods}, Journal of Optimization Theory and Applications, \textbf{95}, pp. 581--600, 1997.

\bibitem{CzaNouPey14} M.-O. Czarnecki, N. Noun, and J. Peypouquet, \textit{Splitting forward-backward penalty scheme for constrained variational problems},  arXiv:1408.0974, 2014.

\bibitem{DauDefDem04} I.~Daubechies, M.~Defrise, and C.~De Mol,
{\it An Iterative Thresholding Algorithm for Linear Inverse Problems with a Sparsity Constraint},
Communications on Pure and Applied Mathematics, {\bf 57}, pp. 1413--1457, 2004.


\bibitem{DelVaiFad14} C.-A. Deledalle, S. Vaiter, J.-M. Fadili, and G. Peyr\'e, Stein Unbiased GrAdient estimator of the Risk (SUGAR) for multiple parameter selection,
SIAM Journal on Imaging Sciences, \textbf{7}, pp. 2448--2487, 2014.

\bibitem{DonJoh94} D.~Donoho and I~Johnstone, {\it Ideal spatial adaptation via wavelet shrinkage}, Biometrika, {\bf 81}, pp.425--455, 1994.

\bibitem{DonZol93} A.~Dontchev and T.~Zolezzi, {\it Well-posed optimization problems}, Springer-Verlag, Berlin, 1993.

\bibitem{DupFadSta12} F.-X.~Dup\'e, J.~Fadili, andj.-L.~Starck, {\it Deconvolution under Poisson noise using exact data-fit function and synthesis or analysis sparsity priors}, 
Statistical Methodology, {\bf 9}, pp. 4--18, 2012.

\bibitem{Egg10} H. Egger, \textit{On the Convergence of modified Landweber iteration for nonlinear inverse problems}, Johann Radon Institute Computational Applied Mathematics, 
Technical Report SFB-2010-017, 2010.

\bibitem{EngHanNeu96} H. Engl, M. Hanke, and A. Neubauer, \textit{Regularization of Inverse Problems}, Kluwer, Dordrecht, 1996.

\bibitem{HalYinZha08} E.~Hale, W.~Yin, and Y.~Zhang, \textit{Fixed-Point Continuation for $\ell_1$-Minimization: Methodology and Convergence}, SIAM Journal on Optimization,
\textbf{19}, pp.1107--1130, 2008.

\bibitem{HinLan13} M.~Hinterm\"uller and A.~Langer, \textit{Subspace correction methods for a class of non-smooth and non-additive convex variational problems with mixed $\ell^1/\ell^2$ data-fidelity in image processing}, SIAM Journal on Imaging Sciences,  {\bf 6}, pp. 2134--2173,  2013.

\bibitem{KalNeuSch08} B. Kaltenbacher, A. Neubauer and O. Scherzer, \textit{Iterative Regularization Methods for Nonlinear Ill-Posed Problems}, De Gruyter, Berlin, 2008.

\bibitem{Kap75} A. A. Kaplan, \textit{On Convex Programming with Internal Regularization}, Soviet Mathematics, Doklady Akademii Nauk, \textbf{19}, pp. 795--799, 1975.


\bibitem{Kap79} A. A. Kaplan, \textit{Iteration processes of convex programming with internal regularization}, Siberian Mathematical Journal, \textbf{20}, pp. 219--226, 1979.




\bibitem{Lan16} A.~Langer, {\it Automated Parameter Selection for Total Variation Minimization in Image Restoration}, arXiv 1509.07442v3, 2015.

\bibitem{LeChaAsa07} T.~Le, R.~Chartran, and T.~Asaki,{\it A variational approach to reconstructing images corrupted by Poisson noise}, Journal of Mathematical Imaging and Vision,  {\bf 27}, pp. 257--63, 2007.

\bibitem{Lem88} B. Lemaire, \textit{Coupling optimization methods and variational convergence}, in Trends in Mathematical Optimization, International Series of Numerical Mathematics, \textbf{84}, pp. 163--179, 1988.

%\bibitem{Lem92} B. Lemaire,  \textit{Bonne position, conditionnement, et bon comportement asymptotique}, S\'eminaire d'Analyse Convexe, Montpellier, {\bf 22}, expos\'e {5},12pp., 1992.

\bibitem{Lem95} B. Lemaire, \textit{On the Convergence of Some Iterative Methods for Convex Minimization}, in Recent Developments in Optimization, Lecture Notes in Economics and Mathematical Systems, \textbf{429}, pp. 252--268, 1995.

\bibitem{Lem98} B. Lemaire, \textit{Well-posedness, conditioning and regularization of minimization, inclusion and fixed-point problems}, Pliska Studia Mathematica Bulgarica, \textbf{12}, pp. 71--84, 1998.

\bibitem{Mal09} S.~Mallat, {\it A Wavelet Tour of Signal Processing}, 3rd edition. Elsevier/Academic Press, Amsterdam,  2009.

\bibitem{Mar78} B. Martinet, \textit{Perturbation des méthodes d'optimisation. Applications}, R.A.I.R.O. - Analyse numérique, \textbf{12}, pp. 153--171, 1978.

\bibitem{Nik02} M.~Nikolova, {\it Minimizers of cost-functions involving non- smooth data-fidelity terms. Application to the processing of outliers.}, 
 SIAM Journal of Numerical  Analysis {\bf 40}, pp. 965--994, 2002.

\bibitem{Pey11} J. Peypouquet, \textit{Coupling the Gradient Method with a General Exterior Penalization Scheme for Convex Minimization}, Journal of Optimization Theory and Applications, \textbf{153}, pp. 123--138, 2011.

\bibitem{Pey} J. Peypouquet, \textit{Convex optimization in normed spaces. Theory, methods and examples.}, Springer, New York, 2015.

\bibitem{Ram03} R.~Ramlau, \textit{TIGRA -- an iterative algorithm for regularizing nonlinear ill-posed problems}, Inverse Problems, {\bf 19}, pp. 433--465, 2003.

\bibitem{RosVil16} S.~Matet, L.~Rosasco, S~.Villa, and B.~C.~V\~u, Don't relax: early stopping for convex regularization, manuscript 2016.

\bibitem{RudOshFat92} L.~Rudin, S.~Osher, and E.~Fatemi, \textit{Nonlinear total variation based noise removal algorithms}, Physica D: Nonlinear Phenomena, {\bf 60}, pp.259--268, 1992.

\bibitem{Sch98} O.~Scherzer, {\it A Modified Landweber Iteration for Solving Parameter Estimation Problems}, Applied Mathematics and Optimization, \textbf{38}, pp. 45--68, 1998.

\bibitem{Ste81} C.~Stein, {\it Estimation of the mean of a multivariate normal distribution}, Annals of Statistics, \textbf{9}, pp. 1135--1151, 1981.

\bibitem{SteChr08} I.~Steinwart and A.~Christmann, Support Vector Machines, Springer, New York, 2008. 

\bibitem{Tos94} P. Tossings, \textit{The perturbed Tikhonov's algorithm and some of its applications}, ESAIM: Mathematical Modelling and Numerical Analysis, \textbf{28}, pp. 189--221, 1994.

\bibitem{Tse91} P. Tseng, \textit{Applications of a Splitting Algorithm to Decomposition in Convex Programming and Variational Inequalities}, SIAM Journal on Control and Optimization, \textbf{29}, pp. 119--138, 1991.

\bibitem{Uza58} H. Uzawa, \textit{Iterative methods for concave programming}, in Studies in Linear and Nonlinear Programming, Stanford University Press, Stanford, pp. 154--165, 1958.

\bibitem{Vai70} M. M. Vainberg, \textit{Le probl\`eme de la minimisation des fonctionelles non lin\'eaires}, C.I.M.E. IV
ciclo (1970).

\bibitem{YamYukYam11} I. Yamada, M. Yukawa and M. Yamagishi, \textit{Minimizing the Moreau Envelope of Nonsmooth Convex Functions over the Fixed Point Set of Certain Quasi-Nonexpansive Mappings}, in Fixed-Point Algorithms for Inverse Problems in Science and Engineering, Springer New York, 2011.

\bibitem{Zol78} T. Zolezzi, \textit{On equiwellset minimum problems}, Applied Mathematics and Optimization, \textbf{4}, pp. 209--223, 1978.

\bibitem{ZouHas05}  Z.~Zou and T.~Hastie, {\it Regularization and variable selection via the elastic net}, Journal of the Royal Statistical Society, Series B, {\bf 67}, 
pp. 301--320, 2005.

\bibitem{Zal} C. Zalinescu, \textit{Convex Analysis in General Vector Spaces}, World Scientific, Singapore, 2002.

\end{thebibliography}

% Non-BibTeX users please use

\end{document}